\newtheorem{theorem}{Theorem}[section]
\newtheorem{proposition}[theorem]{Proposition} 
\newtheorem{lemma}[theorem]{Lemma}
\newtheorem{corollary}[theorem]{Corollary}
\newcommand{\vlambda}{{\vec{\lambda}}}
\newcommand{\vgamma}{{\vec{\gamma}}}
\newcommand{\vmu}{{\vec{\mu}}}
\newcommand{\vnu}{{\vec{\nu}}}
\newcommand{\Gr}{\mathrm{Gr}}
\newcommand{\SL}[1][n]{\mathrm{SL_#1}}
\newcommand{\Hom}{\mathrm{Hom}}
\newcommand{\C}{\mathbb{C}}
\newcommand{\D}{\mathbb{D}}
\newcommand{\IC}{\mathrm{\mathbf{IC}}}
\newcommand{\PGL}{\mathrm{PGL_n}}
\renewcommand{\d}{\mathrm{d}}
\newcommand{\cK}{\mathcal{K}}
\newcommand{\cO}{\mathcal{O}}
\newcommand{\rep}{\mathrm{\mathbf{rep}}}
\renewcommand{\top}{\mathrm{top}}
\newcommand{\braket}[1]{{\langle #1 \rangle}}
\newcommand{\perv}{\mathrm{\mathbf{perv}}}
\begin{document}

\title{Generating Basis Webs for $\SL$}
\author{Bruce Fontaine}
\address{University of Toronto}

\begin{abstract} 
Given a simple algebraic group $G$, a web is a directed trivalent graph with edges labelled by dominant minuscule weights. There is a natural surjection of webs onto the invariant space of tensor products of minuscule representations. Following the work of Westbury, we produce a set of webs for $\SL$ which form a basis for the invariant space via the geometric Satake correspondence. In fact, there is an upper unitriangular change of basis to the Satake basis. This set of webs agrees with previous work in the cases $n=2,3$ and generalizes the work of Westbury in the case $n\geq 4$.
\end{abstract}

\maketitle

\section{Introduction}

Our goal of this paper is to extend \cite{Kuperberg:spiders} and \cite{Westbury:general} and produce a basis for the invariant space $(V_{1}\otimes\cdots\otimes V_{k})^{\SL}=\Hom_{\SL}(\C,V_{1}\otimes\cdots\otimes V_{k})$ with the $V_i$ a minuscule $\SL$ representations. This basis will have the special property that there is an upper unitriangular change of basis to the Satake basis. In \cite{fkk}, given a sequence $\vlambda$ of dominant minuscule weights for a simple, simply connected algebraic group $G$, vectors in $\Hom_G(\C,V_{\vlambda})$ where $V_\vlambda=V_{\lambda_1}\otimes V_{\lambda_2}\otimes\cdots\otimes V_{\lambda_k}$ can be obtained from a simple combinatorial object called a web. A {\em web} is a directed, trivalent graph whose edges are labelled by the dominant minuscule weights of $G$. Each vertex in a web can be thought of as a vector in $\Hom_G(\C,V_{\vnu})$ where $\vnu$ is the sequence of edge labels at the vertex. A web is then a way of composing the vectors corresponding to the vertices to obtain a new vector in some space $\Hom_G(\C,V_{\vlambda})$. Here $\vlambda$ is determined by the web and is called its {\em boundary}. The vector in $\Hom_G(\C,V_{\vlambda})$ corresponding to a web $w$ is called the {\em web vector} and will be denoted $\Psi(w)$.

In the case $G=\SL$ all fundamental weights are minuscule. Let $\omega_i$ be the $i$-th fundamental weight of $\SL$, i.e. the highest weight of the irreducible representation $\bigwedge^i\C^n$. In \cite{Morrison:diagram} it is shown that for each sequence $\vlambda$ of dominant minuscule weights, the image of all webs with boundary $\vlambda$ under $\Psi$ spans $\Hom_{\SL}(\C,V_\vlambda)$. In the case $n=2$, the set of crossless planar matchings are webs and form a basis for the invariant spaces under $\Psi$. In \cite{Kuperberg:spiders} Kuperberg shows that in the case $n=3$, the set of elliptic webs forms a basis for $\Hom_{\SL[3]}(\C,V_\vlambda)$ under $\Psi$. More recently in \cite{Westbury:general} Westbury provided a basis for general $n$ when $\lambda_i\in\{\omega_1,\omega_{n-1}\}.$

Let $\cO = \C[[t]]$ and $\cK = \C((t))$. The {\em affine Grassmannian} of $\PGL$ is $$\Gr = \Gr(\PGL) = \PGL(\cK)/\PGL(\cO)$$ where $\PGL$ is the Langlands dual of $\SL$. The left $\PGL(\cO)$ orbits on $\Gr$ are in one-to-one correspondence with dominant weights of $\SL$. Each orbit is generated by an element denoted $t^\lambda$. The left $\PGL(\cK)$ diagonal orbits on $\Gr\times\Gr$ are in one-to-one correspondence with the left $\PGL(\cO)$ orbits on $\Gr$. Thus these diagonal orbits are also labelled by dominant weights. Let $\d(L,L')$ be the label of the orbit in which the pair $(L,L')$ resides. Then $\d$ is a weight valued metric in the sense of \cite{KLM:generalized} and with it we can define the {\em twisted product} $$\overline{\Gr_\vlambda}=\{(L_0,\cdots,L_n)\in\Gr^{n+1}|L_0=t^0,\d(L_{i-1},L_i)\leq\vlambda\}$$ which has open stratum $$\Gr_\vlambda=\{(L_0,\cdots,L_n)\in\Gr^{n+1}|L_0=t^0,\d(L_{i-1},L_i)=\vlambda\}.$$ Let $m_\vlambda:\overline{\Gr_\vlambda}\rightarrow\Gr$ be the projection onto the last factor. $m_\vlambda$ is also known as the {\em multiplication morphism} and its fibres $F(\vlambda)=m^{-1}_\vlambda(t^0)$ are called {\em Stake fibres}.

We have the following result of the Geometric Satake correspondence:

\begin{theorem} Let $\vlambda$ be a sequence of weights of $\SL$, then $\Hom_{\SL}(\C,V_\vlambda)$ is canonically isomorphic to $H_\top(F(\vlambda),\C).$ Thus each top-dimensional component $Z \subseteq F(\vlambda)$ yields a vector $[Z] \in \Hom_{\SL}(\C,V_\vlambda)$. These vectors form a basis, the {\em Satake basis}.
\end{theorem}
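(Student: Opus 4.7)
The plan is to derive this directly from the Mirković--Vilonen formulation of geometric Satake. Under that equivalence, the category of $\PGL(\cO)$-equivariant perverse sheaves on $\Gr$ is tensor-equivalent to finite-dimensional representations of $\SL$ via the total cohomology (fiber) functor, and the intersection cohomology sheaf $\IC_\lambda$ on $\overline{\Gr_\lambda}$ corresponds to the irreducible representation $V_\lambda$. The tensor product is realized by convolution: $V_\lambda\otimes V_\mu$ corresponds to $(m_{(\lambda,\mu)})_*$ applied to the twisted external product of $\IC_\lambda$ and $\IC_\mu$ on $\overline{\Gr_{(\lambda,\mu)}}$. Iterating, $V_\vlambda$ is realized as the pushforward along $m_\vlambda$ of the twisted external IC sheaf on $\overline{\Gr_\vlambda}$.

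First I would identify $\Hom_{\SL}(\C,V_\vlambda)$ with the zero weight space $V_\vlambda[0]$. By the Mirković--Vilonen weight decomposition, the $\mu$-weight space of the representation attached to a Satake sheaf $\mathcal{F}$ is the cohomology of $\mathcal{F}$ along the semi-infinite orbit through $t^\mu$; in the present setting the relevant contribution for $\mu=0$ reduces to the stalk-type cohomology of $(m_\vlambda)_*(\IC_{\vlambda}^{\text{tw}})$ at $t^0$, which is $H^\bullet(F(\vlambda),\C)$ up to a degree shift by $\dim \overline{\Gr_\vlambda}$.

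Next I would argue that only the top degree survives. Because each $\lambda_i$ is minuscule, $\overline{\Gr_{\lambda_i}}=\Gr_{\lambda_i}$ is smooth projective, and its IC sheaf is simply the constant sheaf (shifted by its dimension). Hence the twisted external product is again a shifted constant sheaf on the smooth variety $\overline{\Gr_\vlambda}$. The multiplication map $m_\vlambda$ is known to be semismall in the minuscule convolution setting, so by the decomposition theorem its pushforward is perverse and the stalk at $t^0$ is concentrated in the single degree that, after untwisting the dimension shifts, matches $H_\top(F(\vlambda),\C)$. This gives the canonical isomorphism with the invariant space.

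The main obstacle is Step 2--3: pinning down the cohomological degree and checking that the Mirković--Vilonen weight filtration really picks out $H_\top(F(\vlambda),\C)$, which rests on semismallness and on the equidimensionality of Satake fibers for minuscule products. Once this is in place, the final claim is automatic: the top homology of a projective variety is freely spanned by the fundamental classes $[Z]$ of its top-dimensional irreducible components, and transporting these through the isomorphism yields the Satake basis of $\Hom_{\SL}(\C,V_\vlambda)$.
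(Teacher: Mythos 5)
Your overall strategy (geometric Satake, convolution realization of $V_\vlambda$, smoothness of $\overline{\Gr_{\lambda_i}}$ in the minuscule case, semismallness of $m_\vlambda$) is the right one and matches the route the paper takes in Section 4, but the middle of your argument contains a genuine error. The invariant space $\Hom_{\SL}(\C,V_\vlambda)$ is \emph{not} the zero weight space $V_\vlambda[0]$: the zero weight space is $\bigoplus_\mu \Hom_{\SL}(V_\mu,V_\vlambda)\otimes V_\mu[0]$ and is strictly larger whenever some nontrivial constituent $V_\mu$ has a nonzero zero weight space (already for $\SL[2]$ and $\vlambda=(\omega_1,\omega_1)$ the zero weight space is $2$-dimensional while the invariants are $1$-dimensional). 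Compounding this, the Mirkovi\'c--Vilonen weight functor for $\mu=0$ is compactly supported cohomology along the semi-infinite orbit $S_0$, which is not the stalk of $(m_\vlambda)_*\IC_{\overline{\Gr_\vlambda}}$ at $t^0$; that stalk is $H^\bullet(F(\vlambda),\C)$ up to shift, and it is \emph{not} concentrated in a single degree (again for $\SL[2]$, $F(\omega_1,\omega_1)\cong\mathbb{P}^1$ has cohomology in two degrees). The stalk receives contributions from every summand $\IC_{\overline{\Gr_\mu}}\otimes\Hom_{\SL}(V_\mu,V_\vlambda)$ of the decomposition theorem, since $t^0\in\overline{\Gr_\mu}$ for all relevant $\mu$; only in the top degree do the support conditions kill all summands except the skyscraper $\IC_{\Gr_0}$, which is why only $H_\top(F(\vlambda))$, and not all of $H^\bullet(F(\vlambda))$, computes the invariants.

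The fix is to replace the weight-functor step by a Hom computation in $\perv(\Gr)$, which is what the paper does: the Satake equivalence gives $\Hom_{\SL}(\C,V_\vlambda)\cong\Hom_{\perv(\Gr)}(\C_{\Gr_0},(m_\vlambda)_*\IC_{\overline{\Gr_\vlambda}})$; since $\vlambda$ is minuscule, $\overline{\Gr_\vlambda}$ is smooth and $\IC_{\overline{\Gr_\vlambda}}=\D_{\overline{\Gr_\vlambda}}[-\dim_\C\overline{\Gr_\vlambda}]$, so adjunction and proper base change along $\{t^0\}\hookrightarrow\Gr$ identify this Hom with the top Borel--Moore homology $H_\top(F(\vlambda),\C)$. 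Your last step (top homology is freely spanned by the classes of top-dimensional components) is then correct and completes the argument; your appeals to semismallness and equidimensionality are the right supporting facts, they are just attached to the wrong functor in your write-up.
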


Given a sequence of dominant minuscule weights $\vlambda$, a {\em minuscule Littelmann path} of type $\vlambda$ is a sequence $\vmu$ of dominant weights starting and ending with $0$ such that $\mu_i-\mu_{i-1}\in W\lambda_i$ for all $i$. Let $P_\vlambda$ be the set of all such paths. Then, when $\vlambda$ is minuscule the components of $F(\vlambda)$ have a natural labelling by the elements of $P_\vlambda$ and $\Gr_\vlambda=\overline{\Gr_\vlambda}$. The set $P_\vlambda$ has a partial order: for all $\vmu,\vmu'\in P_\vlambda$ we set $\vmu'\leq\vmu$ if and only if $\mu_i'\leq\mu_i$ for all $i$. For each $\vmu\in P_\vlambda$ we will construct a web $w_\vmu$ with boundary $\vlambda$. The web $w_\vmu$ will satisfy certain coherence conditions that will allow the calculation of the expansion of $\Psi(w_\vmu)$ in terms of the Satake basis:
\begin{theorem}\label{thm:main} The image of $\{w_\vmu|\vmu\in P_\vlambda\}$ under $\Psi$ is a basis of $\Hom_{\SL}(\C,V_\vlambda)$ and the change of basis to the Satake basis is upper unitriangular with respect to the partial ordering on $P_\vlambda$.
\end{theorem}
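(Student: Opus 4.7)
The plan is to construct the webs $w_\vmu$ explicitly and then verify upper unitriangularity by computing the leading term of $\Psi(w_\vmu)$ in the Satake basis, using the geometry of the twisted product $\overline{\Gr_\vlambda}$.

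First, I would define $w_\vmu$ inductively on the length $k$ of $\vlambda$. Each step $\mu_i-\mu_{i-1}\in W\lambda_i$ singles out a unique extremal weight of $V_{\lambda_i}$ and hence a canonical way to grow a new boundary strand. Starting from the web built from the first $i-1$ steps, I would attach a strand labelled $\lambda_i$ and combine it with neighbouring strands via a chain of trivalent vertices chosen to realize the transition $\mu_{i-1}\to\mu_i$. The output is a planar directed trivalent graph with boundary $\vlambda$ whose interior structure records the path $\vmu$. One must show that the growth algorithm is well defined (independent of auxiliary choices, at least up to the standard web relations) and indeed produces a web in the sense of \cite{fkk}.

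Second, I would expand $\Psi(w_\vmu)$ in the Satake basis. In the minuscule setting $\overline{\Gr_\vlambda}=\Gr_\vlambda$ and the top-dimensional components $Z_\vmu$ of the Satake fibre $F(\vlambda)$ are convolutions of MV cycles corresponding to the weight differences $\mu_i-\mu_{i-1}$. Using the geometric description of web vectors as cycle classes on $\overline{\Gr_\vlambda}$ pushed to $\Gr$ via $m_\vlambda$, I would show that the cycle determined by $w_\vmu$ meets $Z_\vmu$ transversely at a single generic point and meets any other component $Z_{\vmu'}$ only when $\vmu'>\vmu$ in the partial order. This is the ``coherence condition'' referenced in the introduction and yields
\[
\Psi(w_\vmu) \;=\; [Z_\vmu] \;+\; \sum_{\vmu'>\vmu} c_{\vmu,\vmu'}\,[Z_{\vmu'}],
\]
the desired upper unitriangular expansion. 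Since $|P_\vlambda|=\dim\Hom_{\SL}(\C,V_\vlambda)$ by Theorem~1.1, the matrix is invertible and the web vectors form a basis.

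The main obstacle is the second step: proving that the purely combinatorial construction of $w_\vmu$ really does realize $Z_\vmu$ as leading component while constraining all subleading contributions to lie strictly above $\vmu$. This requires a precise dictionary between the local trivalent vertices used in the growth algorithm and the MV-cycle factorization of $Z_\vmu$, and in particular a rigidity statement showing that the choices made at each step of the construction are forced by compatibility with $\vmu$. The fact that closures of MV cycles respect the dominance order on weights should be the key geometric input, but translating this into a statement about intersection behaviour of the specific cycle associated to $w_\vmu$ is likely to be the technical heart of the paper.
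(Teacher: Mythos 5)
Your overall strategy is the paper's: build $w_\vmu$ by a growth algorithm indexed by the Littelmann path, interpret $\Psi(w_\vmu)$ as the pushforward of a cycle class from a configuration space to the Satake fibre, show the coefficient of the component labelled $\vmu$ is $1$, and confine all other coefficients to paths comparable with $\vmu$. However, you have the direction of the triangularity reversed, and this is a genuine geometric error rather than a convention. You claim the cycle attached to $w_\vmu$ meets $Z_{\vmu'}$ only for $\vmu'>\vmu$; in fact the subleading terms occur only for $\vmu'<\vmu$. The reason is the triangle inequality for the weight-valued metric on $\Gr$: for any configuration $f\in Q(D(w_\vmu))$, the distance $\d(t^0,f(v_i))$ is bounded above by the total weight of any path in the dual diskoid from the marked vertex to $v_i$, hence by the geodesic length $\mu_i$. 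So the boundary map lands in $\bigcup_{\vmu'\le\vmu}\overline{Q(A(\vlambda,\vmu'))}$, and only components with $\vmu'\le\vmu$ can carry a nonzero coefficient. This does not break invertibility of the change-of-basis matrix, but the mechanism you invoke is being applied backwards.

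The second gap is that ``well defined up to the standard web relations'' is not sufficient, and transversality at a generic point is not how the leading coefficient is actually controlled. The paper requires a \emph{specific} web for each $\vmu$ satisfying a combinatorial condition, coherence of the dual diskoid: all geodesics from the marked vertex to any other vertex have a common length, every internal vertex lies on such a geodesic, and each edge realizes a Weyl translate of the difference of distances to its endpoints. Coherence is exactly what guarantees, via apartments and convexity in the affine building, that a generic configuration of the fan graph $A(\vlambda,\vmu)$ extends \emph{uniquely} to a configuration of $D(w_\vmu)$; this unique-extension lemma, not an intersection-theoretic transversality computation, is the source of the coefficient $1$. An arbitrary web realizing the path $\vmu$ need not be coherent, so the bulk of the work is the explicit construction (irreducible triangular diagrams of length one for each Weyl-orbit weight, glued by a diamond product) together with the verification that the product is coherent with associated path $\vmu$. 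Your proposal correctly flags this as the technical heart but does not identify the property (coherence) that makes the argument close.
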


In \cite{fkk} for any web $w$ with boundary $\vlambda$, we produced an associated {\em web variety} $Q(D(w))$ with a projection $\pi$ to $F(\vlambda)$ and a class $c(w)\in H_\top(Q(D(w)))$ such that $\pi_*(c(w))=\Psi(w)$. By understanding $c(w)$, we can show that its image under $\pi_*$ expands in terms of the Satake basis as described above. In fact, the coherence conditions on a web will allow us to associte to each coherent web with boundary $\vlambda$ a path $\vmu\in P_\vlambda$. The invariant vector $\Psi(w)$ coming from a coherent web $w$ expands in the Satake basis as the component of $F(\vlambda)$ labelled $\vmu$ plus lower order terms. This fact will follow by adapting the $\SL[3]$ results in \cite{fkk} to $\SL$. Thus if one selects $w_\vmu$ to be a coherent web with path $\vmu\in P_\vlambda$ then the theorem would be proven. The construction of these diagrams will be given by a generalization of the triangular diagram construction of Westbury from \cite{Westbury:general} to the case of arbitrary dominant minuscule weights.

\section{A geometric interpretation of webs}

\subsection{Webs and Dual Diskoids}
Formally, a {\em web} for $\SL$ is a morphism in the free spider defined in \cite{fkk}. It is an oriented trivalent graph in the disk with a marked boundary point with edges labelled by minuscule representation of $\SL$. The orientation of an edge can be switched at the cost of dualizing the labelling representation. We then require that for each vertex in the graph when the edges are oriented inwards the labelling representations $V_\lambda$, $V_\mu$ and $V_\nu$ satisfy $$\Hom_{\SL}(\C,V_\lambda\otimes V_\mu\otimes V_\nu) \ne 0.$$ The condition that the weights $\lambda$, $\mu$ and $\nu$ are minuscule forces this vector space to be at most one-dimensional. The {\em boundary} of a web is given by orienting all boundary edges outwards and reading the labels of the edges clockwise from the marked point. Figure~\ref{sl4web} shows an example web for $\SL[4]$.
 
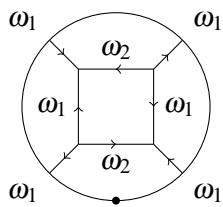
\begin{figure*}
\begin{tikzpicture}[ar/.style={postaction={decorate,decoration={markings,mark=at position .5 with {\arrow{>}}}}},scale=0.5]
\draw[ar] (1,1) -- (1,-1) node[midway,right] {$\omega_1$};
\draw[ar] (1,1) -- (-1,1) node[midway,above] {$\omega_2$};
\draw[ar] (1,1) -- (45:2.5) node[above right] {$\omega_1$};
\draw[ar] (-1,-1) -- (1,-1) node[midway,below] {$\omega_2$};
\draw[ar] (-1,-1) -- (-1,1) node[midway,left] {$\omega_1$};
\draw[ar] (-1,-1) -- (225:2.5) node[below left] {$\omega_1$};
\draw[ar] (135:2.5) node[above left] {$\omega_1$} -- (-1,1);
\draw[ar] (315:2.5) node[below right] {$\omega_1$} -- (1,-1);
\draw (0,0) circle(2.5);
\fill (270:2.5) circle(3pt);
\end{tikzpicture}
\caption{A web for $\SL[4]$. The boundary is $\omega_1,\omega_3,\omega_1,\omega_3$.}
\label{sl4web}
\end{figure*}

Given a web $w$, define its {\em dual diskoid} $D(w)$ as the dual graph with edges oriented by the requirement that the pairing of each edge with its dual gives a positive orientation. The edges of $D(w)$ take their labelling from the edges of $w$ and the marked boundary point on the disc gives a marked vertex. Figure~\ref{sl4webdual} shows the dual diskoid for Figure~\ref{sl4web}.

\begin{figure*}
\begin{tikzpicture}[ar/.style={postaction={decorate,decoration={markings,mark=at position .5 with {\arrow{<}}}}},scale=0.75]
\draw[ar] (0,2) -- (0,0) node[midway,left] {${\scriptstyle\omega_2}$};
\draw[ar] (0,-2) -- (0,0) node[midway,right] {${\scriptstyle\omega_2}$};
\draw[ar] (0,0) -- (2,0) node[midway,above] {${\scriptstyle\omega_1}$};
\draw[ar] (0,0) -- (-2,0) node[midway,below] {${\scriptstyle\omega_1}$};
\draw[ar] (2,0) -- (0,2) node[midway,above right] {$\omega_1$};
\draw[ar] (-2,0) -- (0,2) node[midway,above left] {$\omega_1$};
\draw[ar] (2,0) -- (0,-2) node[midway,below right] {$\omega_1$};
\draw[ar] (-2,0) -- (0,-2) node[midway,below left] {$\omega_1$};
\fill (0,-2) circle(3pt);
\end{tikzpicture}
\caption{The dual diskoid of Figure~\ref{sl4web}.}
\label{sl4webdual}
\end{figure*}
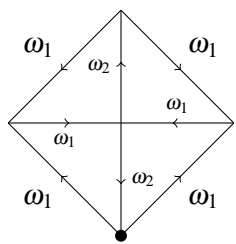

A {\em geodesic} in a diskoid is a path between two vertices of the diskoid with minimal total weight, or length. Since this length is a sum of dominant minuscule weights, it lies in the dominant Weyl chamber and the comparison between two lengths is by the usual ordering of weights. A geodesic thus corresponds to a minimal cut path of the web $w$. Since the weights of two geodesics between a pair of vertices need not be comparable, in general there need not be a unique weight for all geodesics between two points. We say that the pair $(a,b)$ has {\em coherent geodesics} if all geodesics between $a$ and $b$ have the same length. Let $\d(a,b)$ be this length. A dual diskoid has coherent geodesics at a vertex $a$ if all pairs $(a,b)$ for vertices $b$ have coherent geodesics. Finally, a web is said to be {\em coherent} if it satisfies the following 3 conditions
\begin{enumerate}
\item The dual has coherent geodesics at the marked vertex.
\item Every internal vertex is contained along some geodesic between the marked vertex and the boundary.
\item For every edge $a\stackrel{\gamma}{\rightarrow}b$ the condition $\d(\bullet,b)-\d(\bullet,a)\in W\gamma$.
\end{enumerate}

Here $\bullet$ denotes the marked vertex of the dual diskoid or the face with the marked boundary point in the web. To any coherent web with boundary $\vlambda$, we can associate $\vmu\in P_\vlambda$ by taking $\mu_i=\d(\bullet,v_i)$ where $v_i$ are the external vertices of the dual diskoid. In \cite{Kuperberg:spiders}, Kuperberg defines the set of non-elliptic webs for $\SL[3]$, those webs with no internal faces of less than degree $6$. It follows from \cite{fkk} that non-elliptic webs are coherent.

The following lemma on geodesics will be useful:
\begin{lemma}\label{lem:geod} Any subpath of a geodesic is still a geodesic. If the pair $(a,b)$ has coherent geodesics, than any pair $(c,d)$ of vertices lying on on a geodesic of $(a,b)$ also has coherent geodesics.
\end{lemma}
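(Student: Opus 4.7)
My plan is to handle both assertions by the same cut-and-paste idea, exploiting the fact that the dominance partial order is compatible with concatenation of paths: since every path length is a sum of dominant minuscule weights, for such weights we have $\alpha+\mu+\beta < \alpha+\mu'+\beta$ in the dominance order if and only if $\mu<\mu'$. This lets any strict inequality on subpath lengths lift to a strict inequality on full-path lengths, and vice versa, which is all one needs to carry out substitutions without leaving the partially ordered setting.

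For the first statement, suppose $\gamma$ is a geodesic from $a$ to $b$ passing through $c$ and then $d$, and decompose $\gamma=\gamma_1\cdot\gamma_2\cdot\gamma_3$ with $\gamma_2$ the subpath from $c$ to $d$. If some path $\gamma_2'$ from $c$ to $d$ had length strictly smaller than $|\gamma_2|$, then $\gamma_1\cdot\gamma_2'\cdot\gamma_3$ would be a path from $a$ to $b$ of length strictly smaller than $|\gamma|$, contradicting that $\gamma$ is a geodesic. Hence $\gamma_2$ is itself a geodesic.

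For the second statement, assume $(a,b)$ is coherent, so every geodesic from $a$ to $b$ shares the common length $\lambda$. Fix a geodesic $\gamma=\gamma_1\cdot\gamma_2\cdot\gamma_3$ from $a$ to $b$ passing through $c$ and $d$, with respective lengths $\alpha,\mu,\beta$; by the first part $\gamma_2$ is a geodesic of length $\mu$. Now let $\delta$ be an arbitrary geodesic from $c$ to $d$ and set $\pi=\gamma_1\cdot\delta\cdot\gamma_3$. I claim $\pi$ is already a geodesic from $a$ to $b$: otherwise some geodesic $\pi'$ from $a$ to $b$ would satisfy $|\pi'|<|\pi|=\alpha+|\delta|+\beta$, and by coherence $|\pi'|=\lambda=\alpha+\mu+\beta$, forcing $\mu<|\delta|$; but then $\gamma_2$ would be a path from $c$ to $d$ strictly shorter than $\delta$, contradicting the geodesic property of $\delta$. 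So $\pi$ is a geodesic, coherence of $(a,b)$ gives $|\pi|=\lambda$, and cancellation of $\alpha$ and $\beta$ yields $|\delta|=\mu$. Thus every geodesic from $c$ to $d$ has length $\mu$, so $(c,d)$ is coherent.

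The only subtlety worth flagging is that dominance is merely a partial order, so two unrelated geodesics between the same endpoints could in principle carry incomparable lengths; however, every length comparison appearing in the argument is between two paths sharing the common prefix $\gamma_1$ and suffix $\gamma_3$, so it reduces instantly to a comparison of the middle subpaths, which the geodesic hypothesis on $\gamma_2$ or $\delta$ controls directly. I do not expect any further obstacle.
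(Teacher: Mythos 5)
Your proof is correct and uses the same cut-and-paste substitution argument as the paper: replace the middle segment of a geodesic from $a$ to $b$ and use cancellation of the common prefix and suffix weights, invoking coherence of $(a,b)$ to pin down the length of the modified path. The only cosmetic difference is that in the second part you substitute a geodesic $\delta$ and argue by contradiction, whereas the paper substitutes an arbitrary path and bounds its weight below directly; the underlying idea is identical.
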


\begin{proof} Let $c$ and $d$ be two vertices lying along a geodesic $\gamma$. If there exists a path $\delta$ between $c$ and $d$ with lower total weight than the section of $\gamma$ between $c$ and $d$, replacing that section of $\gamma$ with $\delta$ would result in a smaller path. But $\gamma$ is already a geodesic. 

For the other statement, suppose that $\gamma$ is a geodesic between a pair of vertices $(a,b)$ with coherent geodesics and $\delta$ any path between $c$ and $d$. Replacing the segment of $\gamma$ between $c$ and $d$ with $\delta$ results in a path with a total weight greater than or equal to that of $\gamma$ since the pair $(a,b)$ has coherent geodesics.
\end{proof}

\subsection{The Affine Grassmannian and Building}

Recall that the {\em affine Grassmannian} for $\PGL$ is defined as $\Gr=\PGL(\cK)/\PGL(\cO)$ and that the action of $\PGL(\cO)$ on the left splits $\Gr$ up into locally closed orbits $\Gr_\lambda$ generated $t^\lambda$ where $\lambda$ is a dominant weight. Then, the left diagonal $\PGL(\cK)$ orbits on $\Gr\times\Gr$ are in one-to-one correspondence with the orbits $\Gr_\lambda$. Thus for any two points $L,L'\in\Gr$ we let $\d(L,L')$ be the label of the orbit containing $(L,L')$. This weight valued distance can be thought of as the relative position of the $\PGL(\cO)$ lattices given by the columns of $L$ and $L'$.

The points of $\Gr$ are also the set of vertices of the {\em affine building} $\Delta=\Delta(\PGL)$. This is a simplicial complex with whose $i$-simplicies are $(L_1,...,L_i)$ when $\d(L_j,L_k)$ is minuscule for all $j,k$ and $i<n$. This complex satisfies the following axioms:
\begin{enumerate}
\item The building is a non-disjoint union of {\em apartments} $\Sigma$ each of which is a copy of the Weyl alcove simplicial complex for $\PGL$.
\item Any two simplicies are contained in at least one apartment.
\item Given given any two apartments $\Sigma$ and $\Sigma'$ and simplicies $\alpha,\alpha'\in \Sigma\cap\Sigma'$, we have an isomorphism $f:\Sigma\rightarrow\Sigma'$ that fixes $\alpha$ and $\alpha'$.
\end{enumerate}

The affine building has coherent geodesics and the resulting distance between two vertices agrees with notion of distance coming from $\Gr$. As a result, we have the following:

\begin{lemma} If $p,q$ are simplicies of $\delta$, then any apartment $\Sigma$ containing $p$ and $q$ also contains every geodesic between them. 
\end{lemma}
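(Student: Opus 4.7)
The plan is to induct on the number of edges in a geodesic $\gamma = (p = v_0, v_1, \dots, v_k = q)$, so that the problem reduces to placing single edges inside $\Sigma$. For the base case $k = 1$, the edge $\{p, q\}$ is a simplex of $\Delta$; axiom~2 yields an apartment $\Sigma'$ containing it, and axiom~3 applied with the common simplices $\{p\}$ and $\{q\}$ gives an isomorphism $f \colon \Sigma' \to \Sigma$ fixing both vertices. The image $f(\{p, q\})$ is a $1$-simplex of $\Sigma$ on the vertex set $\{p, q\}$, and uniqueness of such a simplex in a simplicial complex forces $\{p, q\} \in \Sigma$.

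For the inductive step with $k > 1$, it suffices to prove $v_1 \in \Sigma$: the base case then places $\{p, v_1\}$ in $\Sigma$, and Lemma~\ref{lem:geod} lets us apply the induction hypothesis to the shorter geodesic $(v_1, \dots, q)$. Using axiom~2, pick an apartment $\Sigma_1$ containing both the edge $\{p, v_1\}$ and the vertex $q$. Since distances in any apartment agree with those in the building, $\Sigma_1$ contains a geodesic from $v_1$ to $q$ of length $\d(v_1, q)$; concatenating with $\{p, v_1\}$ produces a path from $p$ to $q$ inside $\Sigma_1$ of total length $\d(p, v_1) + \d(v_1, q)$. Coherence of building geodesics, combined with the fact that $v_1$ lies on the geodesic $\gamma$, forces this sum to equal $\d(p, q)$, so the concatenated path is itself a geodesic in $\Sigma_1$.

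The main obstacle is now to transfer this construction back into $\Sigma$. Axiom~3 provides an isomorphism $f \colon \Sigma_1 \to \Sigma$ fixing $p$ and $q$, under which the geodesic constructed above is carried to a geodesic in $\Sigma$ passing through $f(v_1)$; a priori this only puts $f(v_1)$ in $\Sigma$, not $v_1$ itself. To close this gap I would invoke the standard strengthening of axiom~3, in which $f$ may be chosen to fix the entire intersection $\Sigma \cap \Sigma_1$ pointwise, together with the convexity of $\Sigma \cap \Sigma_1$ inside each apartment: any geodesic of $\Sigma_1$ between two vertices of the intersection must remain in the intersection. Applied to $p$ and $q$, this places $v_1 \in \Sigma \cap \Sigma_1 \subseteq \Sigma$ and completes the induction.
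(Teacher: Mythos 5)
The paper offers no proof of this lemma at all: it is quoted as the standard convexity property of apartments (``any apartment containing $p$ and $q$ also contains the convex hull of $p$ and $q$''), taken as known from building theory together with the assertion that the affine building has coherent geodesics. So your proposal has to stand on its own, and there it has a genuine gap at the decisive step. The scaffolding is sound: inducting along the geodesic and reducing everything to the single claim $v_1\in\Sigma$ is a reasonable decomposition; the base case via axiom~3 is correct; and the verification that the concatenated path in $\Sigma_1$ is a building geodesic (via Lemma~\ref{lem:geod} and coherence of geodesics, which gives $\d(p,v_1)+\d(v_1,q)=\d(p,q)$) is fine. You also correctly diagnose that the naive use of axiom~3 only places $f(v_1)$, not $v_1$, in $\Sigma$.

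The problem is how you close that gap. Both inputs you invoke --- an isomorphism $f$ fixing $\Sigma\cap\Sigma_1$ pointwise, and the convexity of $\Sigma\cap\Sigma_1$ --- are not among the three axioms listed in the paper, and the second is a nontrivial theorem of building theory whose standard proof goes through the retractions $\rho_{\Sigma,C}$ and gallery distances. It is of essentially the same depth as the lemma you are trying to prove, which is precisely the statement that apartments are geodesically convex. Your induction therefore performs a valid reduction from one standard convexity theorem to another, but it does not make the result elementary or derive it from the stated axioms: the real content is still being cited, not proved. Either cite the convexity of apartments directly (as the paper implicitly does) or supply the retraction argument; as written, the crux is asserted rather than established.
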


In other words, any apartment containing $p$ and $q$ also contains the convex hull of $p$ and $q$.

Let $A$ be a directed graph labelled by dominant weights with a distinguished vertex $\bullet$. Then the {\em configuration space} $Q(A)$ is the variety of maps $f:v(A)\rightarrow\Gr$ (here $v(A)$ is the vertex set of $A$) satisfying the following conditions: $f(\bullet)=t^0$ and $\d(f(v),f(v'))=\d(v,v')$ for every edge $v\stackrel{\d(v,v')}{\rightarrow} v'$. In this setting is is natural to define the configuration space of a polyline $L(\vlambda)$ labelled by a sequence of dominant minuscule weights $\vlambda.$ $$Q(L(\vlambda))=\Gr_\vlambda=\{(L_0,\cdots,L_n)\in\Gr^{n+1}|L_0=t^0,\d(L_{i-1},L_i)=\lambda_i\}.$$ The set $\Gr_\vlambda$ is known as the twisted product of orbits and the projection on the last vertex is the {\em convolution} (or {\em multiplication}) morphism $m_\vlambda:\Gr_\vlambda\rightarrow\Gr$. The fibre, $F(\vlambda)=m^{-1}(0)$ is a projective variety generally called the {\em Satake fibre} but in the case of $\SL$, it is also isomorphic to a Grothendieck-Springer fibre. Note that $F(\vlambda)$ can also be realized as the configuration space of a closed polygon with edges labelled $\vlambda$. Then we have the following corollary of the Geometric Satake isomorphism:

\begin{corollary} $$\Hom_{\SL}(\C,V_\vlambda)\cong H_\top(F(\vlambda)),$$ where $H_\top$ is the top homology of $F(\vlambda)$. 
\end{corollary}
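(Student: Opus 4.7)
The plan is to deduce this corollary directly from the Geometric Satake theorem cited at the beginning of the introduction, which already asserts $\Hom_{\SL}(\C, V_\vlambda) \cong H_\top(F(\vlambda), \C)$ for an arbitrary sequence $\vlambda$ of weights. The corollary is therefore really a restatement in the minuscule setting, and the only task is to reconcile the two descriptions of the Satake fibre that appear in the text.

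First, I would unwind the definition of $F(\vlambda) = m_\vlambda^{-1}(t^0)$ from the introduction: a point is a tuple $(L_0, \ldots, L_n) \in \Gr^{n+1}$ with $L_0 = t^0$ and $\d(L_{i-1}, L_i) \le \lambda_i$, together with the fibre condition $L_n = t^0$. Second, I would invoke the minuscule hypothesis recorded earlier, which tells us that $\overline{\Gr_\vlambda} = \Gr_\vlambda$ when each $\lambda_i$ is minuscule; this promotes each inequality $\d(L_{i-1}, L_i) \le \lambda_i$ to an equality and identifies $F(\vlambda)$ with the closed-polygon configuration space $Q$ of the polygon whose edges are labelled by $\vlambda$, exactly as asserted in the paragraph preceding the corollary. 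Third, with these identifications in place the corollary becomes the previously quoted theorem applied verbatim, with $H_\top$ interpreted as top-dimensional singular homology and complex coefficients understood.

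The main obstacle, such as it is, is purely bookkeeping: one must verify that the two descriptions of $F(\vlambda)$ in use---as the fibre $m_\vlambda^{-1}(t^0)$ of the convolution morphism and as the configuration space of a closed polygon with edges labelled $\vlambda$---genuinely coincide, and that passing to $\C$ coefficients poses no issue in the minuscule case where $\Gr_\vlambda$ is smooth. The substantive input, namely the Geometric Satake correspondence, has already been invoked in the preceding theorem, so no additional geometric or representation-theoretic work is required.
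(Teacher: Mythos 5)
Your proposal is correct and matches the paper's treatment: the paper offers no independent argument for this corollary, presenting it simply as a restatement of the Geometric Satake correspondence already quoted in the introduction (and later expanded in Section 4 via $\Hom_{\perv(\Gr)}(\C_{\Gr_0},(m_{\vlambda})_*\IC_{\overline{\Gr_{\vlambda}}})$). Your added bookkeeping about $\overline{\Gr_\vlambda}=\Gr_\vlambda$ in the minuscule case and the identification of $F(\vlambda)$ with the closed-polygon configuration space is exactly the reconciliation the paper itself performs in the surrounding text.
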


The top dimensional components $Z$ of $F(\vlambda)$ are a basis for $H_\top(F(\vlambda))$ and thus give a basis $[Z]\in \Hom_{\SL}(\C,V_\vlambda)$ called the {\em Satake basis}.

If $\vlambda$ is a dominant minuscule sequence of weights, then a {\em minuscule Littelmann path} $\vmu$ of type $\vlambda$ (ending at $0$) is a sequence of dominant weights such that $\mu_0=\mu_n=0$ and $\mu_i-\mu_{i-1}\in W\lambda_i$ where $W$ is the Weyl group of $\SL$. Let $P_\vlambda$ be the set of these paths. Then the top dimensional components of $F(\vlambda)$ are in one-to-one correspondence with the elements of $P_\vlambda$ and are constructed in the following manner:

\begin{theorem} \cite{fkk} Given $\vmu\in P_\vlambda$, let $A(\vlambda,\vmu)$ be the fan graph:
$$
\begin{tikzpicture}[scale=.75,ar/.style={postaction={decorate,decoration={markings,mark=at position .5 with {\arrow{>}}}}}]
\draw[ar] (0,0) to node[below left=-.5ex] {$\lambda_1=\mu_1$} (-2,1);
\draw[ar] (-2,1) to node[below left] {$\lambda_2$} (-3,3);
\draw[ar] (-3,3) to node[above left] {$\lambda_3$} (-2,5);
\draw[ar] (-2,5) to node[above] {$\lambda_4$} (0,6);
\draw (0,6) -- (1,6);
\draw (3,3) -- (3,2);
\draw[ar] (3,2) to node[below right] {$\lambda_{n-1}$} (2,0);
\draw[ar] (2,0) to node[below right] {$\lambda_n = \mu_{n-1}$} (0,0);
\draw[ar] (0,0) to node[above right] {$\mu_2$} (-3,3);
\draw[ar] (0,0) to node[above right] {$\mu_3$} (-2,5);
\draw[ar] (0,0) to node[above right] {$\mu_4$} (0,6);
\draw[ar] (0,0) to node[above=.75ex] {$\mu_{n-2}$} (3,2);
\fill (2.14,5.04) circle (.035); \fill (2.3,4.8) circle (.035);
\fill (2.46,4.56) circle (.035);
\fill (1.04,3.04) circle (.035); \fill (1.2,2.8) circle (.035);
\fill (1.36,2.56) circle (.035);
\fill (0,0) circle(2pt);
\end{tikzpicture}
$$
Then $Q(A(\vlambda,\vmu))\subset F(\vlambda)$, $\overline{Q(A(\vlambda,\vmu))}$ is an irreducible component of $F(\vlambda)$ and each irreducible component appears in this way.
\end{theorem}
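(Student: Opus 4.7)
The plan is to realize $Q(A(\vlambda,\vmu))$ as a natural locally closed stratum of $F(\vlambda)$, show these strata are irreducible, and conclude via a dimension count that they are precisely the top components. Define
\[
S_\vmu := \{(L_0,\ldots,L_n)\in F(\vlambda) : \d(t^0,L_i)=\mu_i \text{ for } 1\le i\le n-1\}.
\]
The identification $Q(A(\vlambda,\vmu))=S_\vmu$ is immediate from the definition of configuration spaces: the outer polygon of $A(\vlambda,\vmu)$ already cuts out $F(\vlambda)$ inside $\Gr^{n+1}$, and the $n{-}2$ diagonals labelled $\mu_2,\ldots,\mu_{n-2}$ impose exactly the distance conditions defining $S_\vmu$. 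For any point $(L_0,\ldots,L_n)\in F(\vlambda)$, the assignment $\mu_i:=\d(t^0,L_i)$ produces dominant weights with $\mu_0=\mu_n=0$; choosing (via the building axioms) an apartment containing $L_0$, $L_{i-1}$ and $L_i$ and working in its coweight lattice, minusculity of $\lambda_i$ forces $\mu_i-\mu_{i-1}\in W\lambda_i$. Hence every point lies in some $S_\vmu$ with $\vmu\in P_\vlambda$, and the $S_\vmu$ stratify $F(\vlambda)$.

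For irreducibility, I would argue by induction on $n$ via the forgetful map that drops the last lattice and works with the truncated boundary. Over a fixed tuple $(L_0,\ldots,L_{i-1})$ lying in the analogous stratum, the fiber over the next step is the intersection of the minuscule orbit $\{L:\d(L_{i-1},L)=\lambda_i\}\cong G/P_{\lambda_i}$ with the condition $\d(t^0,L)=\mu_i$. Choosing an apartment through $t^0$ and $L_{i-1}$ and using that $\lambda_i$ is minuscule, this intersection is a Schubert-like affine cell with dimension controlled by the element $w\in W$ with $\mu_i-\mu_{i-1}=w\lambda_i$, hence irreducible. Iterating the tower makes $S_\vmu$ irreducible.

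The main obstacle is to verify that every $S_\vmu$ with $\vmu\in P_\vlambda$ attains the top dimension $\langle\rho,\lambda_1+\cdots+\lambda_n\rangle$ of $F(\vlambda)$. This can be done directly by summing the affine-cell dimensions along the tower and checking that the telescoping sum is independent of $\vmu$ in the minuscule case, but the cleaner route is to invoke Geometric Satake: it gives $\dim H_\top F(\vlambda)=\dim\Hom_{\SL}(\C,V_\vlambda)=|P_\vlambda|$ by Littelmann's path model. Now every irreducible component of $F(\vlambda)$ is the closure of the stratum containing its generic point, so the components form a subset of $\{\overline{S_\vmu}\}_{\vmu\in P_\vlambda}$; the closures are pairwise distinct because $\d(t^0,L_i)$ takes a locally constant value on each $S_\vmu$. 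Combined with equidimensionality of $F(\vlambda)$ in the minuscule case (where $\overline{\Gr_\vlambda}=\Gr_\vlambda$ is smooth over its image), the component count $|P_\vlambda|$ forces every $\overline{S_\vmu}=\overline{Q(A(\vlambda,\vmu))}$ to be a top component, and every top component to arise this way.
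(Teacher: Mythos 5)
This theorem is stated with a citation to \cite{fkk} and the paper offers no proof of its own, so your argument can only be compared with the standard line of proof rather than with anything in the text. Your strategy is sound and is essentially that standard line: identify $Q(A(\vlambda,\vmu))$ with the locally closed stratum $S_\vmu$ cut out by $\d(t^0,L_i)=\mu_i$, observe that these strata partition $F(\vlambda)$ with $\vmu$ forced into $P_\vlambda$ by minusculity, prove each stratum irreducible via the tower of forgetful maps, and count against $\dim H_\top(F(\vlambda))=\dim\Hom_{\SL}(\C,V_\vlambda)=|P_\vlambda|$. The counting step is the cleanest part and in fact does more work than you give it credit for: the squeeze $|P_\vlambda|=\#\{\text{top components}\}\le\#\{\text{components}\}\le\#\{\vmu : S_\vmu\neq\emptyset\}\le|P_\vlambda|$ delivers nonemptiness of every $S_\vmu$, distinctness of the closures, and equidimensionality of $F(\vlambda)$ all at once, so you need not assume any of these in advance.

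Two points need repair. First, your parenthetical justification of equidimensionality --- that $\Gr_\vlambda$ is ``smooth over its image'' --- is false: the convolution morphism is proper and semismall, not smooth, and its fibres over deeper strata jump in dimension. As just noted the claim is dispensable, but as written it is an error. Second, and more seriously, the irreducibility of the fibres in your tower is the real technical content of the theorem and is only asserted. You should prove that $\{L:\d(L_{i-1},L)=\lambda_i,\ \d(t^0,L)=\mu_i\}$ is a single orbit: in the lattice model $\Gr_{\lambda_i}(L_{i-1})$ is a Grassmannian of subspaces of $L_{i-1}/tL_{i-1}$, the simultaneous stabilizer of $t^0$ and $L_{i-1}$ acts through the parabolic preserving the filtration that $t^0$ induces on $L_{i-1}/tL_{i-1}$, and one must check that the level sets of $L\mapsto\d(t^0,L)$ coincide with single orbits of that parabolic (i.e.\ Schubert cells) rather than unions of them. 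A similar small gap sits in the stratification step: that $\mu_i-\mu_{i-1}\in W\lambda_i$ for $\mu_i=\d(t^0,L_i)$ requires the observation that for dominant $\mu_{i-1}$ and minuscule $\lambda_i$ one can dominantize $\mu_{i-1}+w\lambda_i$ using only reflections stabilizing $\mu_{i-1}$. Both facts are true and standard, but without them the argument is a sketch rather than a proof.
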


Since the dual diskoid is a direct graph labelled by dominant minuscule weights with a distinguished vertex, we can consider the configuration space $Q(D(w))$. If $v_0=\bullet,\cdots,v_n=\bullet$ are the exterior vertices of $D(w)$ read clockwise, define the boundary map $\pi:Q(D(w))\rightarrow F(\vlambda)$ by taking $\pi(f)=(f(v_0),\cdots,f(v_n)).$ Then we have the following results from \cite{fkk}:

\begin{theorem} There exists a class $c(w)\in H_\top(Q(D(w)))$ such that $\pi_*(c(w))=\Psi(w)$.
\end{theorem}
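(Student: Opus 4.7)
The plan is to construct the class $c(w)$ inductively on the combinatorial complexity of $w$, and then verify the pushforward formula by matching the geometric gluing of configuration spaces with the algebraic contraction defining the web vector $\Psi(w)$. The inductive building blocks are the trivalent vertices of $w$, where the monoidality of the geometric Satake correspondence provides the bridge between the topology of $Q(D(w))$ and the tensor-algebraic definition of $\Psi$.

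For the base case, suppose $w$ has a single trivalent vertex with boundary labels $\lambda,\mu,\nu$. The dual diskoid is a triangle, so $Q(D(w))$ sits inside $F(\lambda,\mu,\nu)$, and $\Hom_{\SL}(\C,V_\lambda\otimes V_\mu\otimes V_\nu)$ is at most one-dimensional because the labels are minuscule. By geometric Satake, so is $H_\top(F(\lambda,\mu,\nu))$. I take $c(w)$ to be the fundamental class of the unique top-dimensional component containing $Q(D(w))$; with the normalization of the free spider fixed in \cite{fkk}, $\pi_\ast$ of this class is exactly the generator $\Psi(w)$. For the inductive step, pick an internal edge of $w$ and cut along it, producing a web $w'$ with two extra boundary strands carrying dual minuscule labels $\gamma,\gamma^\ast$. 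Combinatorially, $D(w)$ is obtained from $D(w')$ by identifying two boundary vertices, so the configuration spaces fit into the Cartesian square
\begin{equation*}
\begin{array}{ccc}
Q(D(w)) & \longrightarrow & Q(D(w')) \\
\downarrow & & \downarrow \\
\Gr & \stackrel{\Delta}{\longrightarrow} & \Gr\times\Gr.
\end{array}
\end{equation*}
I then define $c(w) := \Delta^!(c(w'))$ as the refined Gysin pullback of the inductive class.

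To close the argument I identify this geometric cutting with the corresponding algebraic contraction: the diagonal Gysin pullback $\Delta^!$ on top homology corresponds, under geometric Satake, to coevaluation $\C\to V_\gamma\otimes V_{\gamma^\ast}$ on the $\Hom$ side. Iterating this reduction down to the single-vertex base case yields $\pi_\ast(c(w)) = \Psi(w)$. The main obstacle is the top-dimensionality issue at each inductive step: the construction requires that $\Delta^!(c(w'))$ really lands in $H_\top$ and is non-zero. For minuscule sequences this follows from the equality $\overline{\Gr_{\vlambda}}=\Gr_{\vlambda}$ together with semismallness of the convolution morphism $m_{\vlambda}$, which forces the fibre products to have the expected dimension; without these minuscule inputs the naive Gysin classes could drop into lower homological degree and the cited identity would break.
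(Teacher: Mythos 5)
This theorem is imported from \cite{fkk} and the present paper gives no proof of it, so I am comparing your outline against the argument in that reference; your architecture (induction on edges, base case a single trivalent vertex where the invariant space is one-dimensional, inductive step geometrizing the contraction of a pair of dual boundary strands) is indeed the right skeleton and matches the strategy used there. The Cartesian square is also correct once set up carefully: cutting an internal edge of $w$ adjacent to a boundary face produces $D(w')$ with one extra vertex (the merged face between the two new strands) and two parallel dual-labelled edges into the two neighbouring faces, and identifying those two neighbours recovers $D(w)$, so $Q(D(w))$ is the fibre product of $Q(D(w'))\rightarrow\Gr\times\Gr$ with the diagonal. Note, however, that not every internal edge can be cut without crossing other strands, so the induction must be run on edges bounding a face that meets the disk boundary, and it terminates at a disjoint union of tripods rather than a single one.

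The genuine gaps are in the inductive step. First, a direction error: joining two adjacent dual strands of $w'$ computes $\Psi(w)$ by post-composing $\Psi(w')$ with the evaluation $V_\gamma\otimes V_{\gamma^*}\rightarrow\C$ on those factors, not the coevaluation. Second, and more seriously, the identity $(\mathrm{id}\otimes\mathrm{ev}\otimes\mathrm{id})_*\circ\pi'_*=\pi_*\circ\Delta^!$ --- that the refined Gysin pullback along the diagonal realizes the evaluation morphism under geometric Satake --- is the entire content of the theorem, and you assert it rather than prove it; establishing it requires identifying the cap morphism of the pivotal structure with an explicit correspondence on Satake fibres and then invoking compatibility of refined Gysin maps with proper pushforward in Cartesian squares. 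Third, $\Delta^!$ is not automatically defined: the pair $(f(a),f(b))$ of points to be identified lands in a locus of pairs of bounded relative position whose closure is in general a singular variety, so the diagonal is not obviously a regular embedding there; one must exploit the smoothness of the minuscule convolution varieties $\Gr_{\vgamma}$ (iterated $G/P$-bundles) to exhibit the diagonal locus as the zero section of a smooth fibration of the correct codimension $2\braket{\gamma,\rho}$ --- ``semismallness of $m_{\vlambda}$'' does not by itself deliver this. Finally, the requirement that the class be non-zero should be dropped: the theorem only asserts $\pi_*(c(w))=\Psi(w)$, and nothing in the construction needs $c(w)\neq 0$.
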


\begin{corollary} $\Psi(w)$ is a linear combination of the classes $[Q(A(\vlambda,\vmu))]$ which occur in the image of $\pi$.
\end{corollary}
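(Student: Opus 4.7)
The plan is to deduce the corollary from the preceding theorem via a support argument. By that theorem, $\Psi(w) = \pi_*(c(w))$ for a class $c(w) \in H_\top(Q(D(w)))$. Since the fundamental classes $[Q(A(\vlambda,\vmu))]$ form a basis of $H_\top(F(\vlambda))$, one can expand
\[
\Psi(w) = \sum_{\vmu \in P_\vlambda} a_\vmu\, [Q(A(\vlambda,\vmu))],
\]
and the content of the corollary becomes the assertion that $a_\vmu = 0$ whenever $\overline{Q(A(\vlambda,\vmu))}$ does not occur in the image of $\pi$.

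My first step is to verify that $\pi$ is proper, so that $\pi(Q(D(w)))$ is a closed subvariety of $F(\vlambda)$ and $\pi_*(c(w))$ is supported inside it. This should follow from the construction of \cite{fkk}: $Q(D(w))$ embeds as a closed subvariety of a product of twisted orbit closures $\overline{\Gr_{\vgamma}}$, one factor for each directed edge-sequence in the dual diskoid, cut out by the compatibility equations of the configuration. Each $\overline{\Gr_{\vgamma}}$ is projective, hence $Q(D(w))$ is projective, and the boundary morphism $\pi$ into the projective variety $F(\vlambda)$ is automatically proper.

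The second step is the support argument itself. A top homology class pushed forward along a proper map is supported, as an algebraic cycle, in the set-theoretic image. If some $a_\vmu$ were nonzero, then the irreducible top-dimensional component $\overline{Q(A(\vlambda,\vmu))}$, being the support of the basis class $[Q(A(\vlambda,\vmu))]$, would have to lie inside $\pi(Q(D(w)))$, contradicting the hypothesis that it does not occur in the image. Hence only components occurring in the image of $\pi$ contribute to the expansion of $\Psi(w)$, which is exactly the statement of the corollary.

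The only real obstacle is the properness — equivalently, the closedness of $\pi(Q(D(w)))$ in $F(\vlambda)$; once this is granted, the corollary is a formal consequence of the preceding theorem together with the description of the Satake basis by fundamental classes of top components. Both ingredients should be read off directly from the machinery already set up in \cite{fkk}, so I expect the proof to be short once the correct framework is cited.
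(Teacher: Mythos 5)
Your argument is correct and is exactly the (implicit) argument of the paper, which states this corollary without proof as an immediate consequence of $\Psi(w)=\pi_*(c(w))$: since the edge labels are minuscule the distance conditions defining $Q(D(w))$ are closed, so $Q(D(w))$ is projective, $\pi$ is proper, and the pushforward is supported on the closed image, forcing the coefficient of any top component not contained in $\pi(Q(D(w)))$ to vanish.
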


Given $\vmu,\vmu'\in P_\vlambda$, we can define a partial ordering: $\vmu'\leq\vmu$ iff $\vmu'_i\leq\vmu_i$ for all $i$.

\begin{theorem} Given a coherent web $w$ with boundary $\vlambda$ and associated path $\vmu\in P_\vlambda$ the following holds:
\begin{enumerate}
\item Suppose that $\vmu'\in P_\vlambda$ and $\vmu'\not\leq\vmu$, then the coefficient of $[\overline{Q(A(\vlambda,\vmu)}]$ in $\Psi(w)$ is $0$.
\item The coefficient of $[\overline{Q(A(\vlambda,\vmu)}]$ in $\Psi(w)$ is $1$.
\end{enumerate}\label{thm:expansion}
\end{theorem}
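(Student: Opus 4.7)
The plan is to analyze the pushforward $\pi_*(c(w)) = \Psi(w)$ from the preceding theorem and expand it in the Satake basis. Because the classes $[\overline{Q(A(\vlambda,\vmu'))}]$ for $\vmu'\in P_\vlambda$ form a basis of $H_\top(F(\vlambda))$, the coefficient of $[\overline{Q(A(\vlambda,\vmu'))}]$ in $\pi_*(c(w))$ is the degree of $\pi$ over the open stratum $Q(A(\vlambda,\vmu'))$ when $\pi$ dominates that component, and is zero otherwise. Both parts of the theorem then reduce to controlling the image of $\pi$ stratum by stratum and computing its generic degree, directly adapting the $\SL[3]$ argument of \cite{fkk} to $\SL$.

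For part (1), the key step is a diskoid triangle inequality: for any $f \in Q(D(w))$ and any vertex $u$ of $D(w)$,
$$\d(f(\bullet), f(u)) \leq \d(\bullet, u),$$
where the right side is the coherent geodesic length in the diskoid and the left side is the weight-valued distance in the building. Given any path $\bullet = u_0, u_1, \ldots, u_k = u$ in $D(w)$ with total edge weight $\gamma$, iterated sub-additivity of the building distance gives $\d(f(\bullet), f(u)) \leq \gamma$; minimizing over paths (whose common minimum exists by coherence at $\bullet$) gives the inequality. Applied to the boundary vertices $v_i$, this reads $\d(t^0, f(v_i)) \leq \mu_i$. Hence whenever $\pi(f) \in Q(A(\vlambda,\vmu'))$, so that $\d(t^0, f(v_i)) = \mu'_i$, one has $\vmu' \leq \vmu$. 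Consequently, if $\vmu' \not\leq \vmu$ then $\pi(Q(D(w)))$ misses the open stratum $Q(A(\vlambda,\vmu'))$, so the coefficient vanishes.

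For part (2), I would construct a unique preimage over a generic point of $Q(A(\vlambda,\vmu))$. For such a generic configuration $(L_0,\ldots,L_n)$, all $L_i$ lie in a common apartment $\Sigma$, and in $\Sigma$ the geodesic from $L_0 = t^0$ to each $L_i$ is unique. Coherence conditions (2) and (3) then pin down the position of every internal vertex: condition (2) says each internal vertex $u$ lies on a diskoid geodesic from $\bullet$ to some boundary $v_i$, so $f(u)$ must be the point of $\Sigma$ on the geodesic from $L_0$ to $L_i$ at combinatorial distance $\d(\bullet, u)$; condition (3) ensures the edge labels in $D(w)$ agree with those along the apartment geodesic, so the resulting $f$ indeed lies in $Q(D(w))$. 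Existence of this lift shows $\pi$ is dominant onto $\overline{Q(A(\vlambda,\vmu))}$, and uniqueness of the internal vertex placement shows $\pi$ has generic degree one there, so the coefficient is exactly $1$.

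The main obstacle is the uniqueness portion of part (2): establishing that, generically, each internal vertex of $D(w)$ is forced into exactly one position. Existence is relatively clean via the apartment axioms, but showing that two distinct consistent lifts cannot coexist for a generic $(L_i)$ requires the full strength of coherence, particularly that several geodesics passing through an internal vertex must all place it at the same point of $\Sigma$. For $n \geq 4$, the wider range of minuscule labels means edges with differing weights can meet at interior vertices, so the compatibility of diskoid labels with the apartment structure requires more delicate bookkeeping than in the $\SL[3]$ case of \cite{fkk}, where the local structure is more rigid.
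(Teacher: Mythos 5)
Your part (1) matches the paper's argument: the weight-valued triangle inequality along any path in $D(w)$ gives $\d(t^0,f(v_i))\leq\mu_i$ for every $f\in Q(D(w))$, so the image of $\pi$ can only meet strata $Q(A(\vlambda,\vmu'))$ with $\vmu'\leq\vmu$, and the coefficient vanishes when $\vmu'\not\leq\vmu$. That part is fine.

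Part (2) has a genuine gap. Your construction of the lift rests on the claim that for a generic configuration $(L_0,\dots,L_n)\in Q(A(\vlambda,\vmu))$ all the $L_i$ lie in a common apartment $\Sigma$, after which you place each internal vertex inside $\Sigma$. This claim is false in general: already for $\SL[2]$ with $\vlambda=(\omega_1)^6$ and $\vmu=(0,\omega_1,0,\omega_1,0,\omega_1,0)$, a generic configuration consists of three distinct neighbours $L_1,L_3,L_5$ of $t^0$ in the tree, and an apartment (a line) contains at most two of them; the same failure occurs whenever the path $\vmu$ returns close to the origin several times. The paper's proof avoids this by working one boundary vertex at a time: for each $v_i$ it chooses an apartment containing only $t^0$ and $f(v_i)$, transports a diskoid geodesic into that apartment to define $f$ on the interior vertices along it (using coherence conditions (1) and (2)), and then proves the essential consistency statement --- that an internal vertex lying on geodesics to two different boundary vertices $v_i,v_j$ receives the same image --- by induction on $j-i$, the base case resting on the fact that $\{f(v_i),f(v_j)\}$ is a simplex of the building (their distance being minuscule or zero) and hence lies, together with $t^0$, in a single apartment containing the relevant convex hull. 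Coherence condition (3) is then what guarantees $\d(f(a),f(b))=\d(a,b)$ on the edges not lying along a geodesic from $\bullet$. Note also that the paper establishes existence and uniqueness of the lift over \emph{every} point of $Q(A(\vlambda,\vmu))$, not just a generic one, and the degree-one conclusion additionally invokes the scheme-theoretic fact from \cite{fkk} that $\pi^{-1}(Q(A(\vlambda,\vmu)))$ is isomorphic to the reduced scheme $Q(A(\vlambda,\vmu))$; your generic-degree argument would need some such reducedness input as well.
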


The first part of this theorem follows directly from the previous result, since $Q(A(\lambda,\vmu'))$ can only be in the image of $\pi$ for a web $w$ if the associated path $\vmu$ satisfies $\vmu'\leq\vmu$. The second part is proved in \cite{fkk}. It follows from the following lemma and the fact that the scheme $\pi^{-1}(Q(A(\vlambda,\vmu)))$ is isomorphic to the reduced scheme $Q(A(\vlambda,\vmu))$.

\begin{lemma} Suppose that $w$ is a coherent web, then any configuration $f\in Q(A(\vlambda,\vmu))$ extends uniquely to a configuration $f\in Q(D(w))$.
\end{lemma}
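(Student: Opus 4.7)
The plan is to construct the extension vertex by vertex using the apartment structure of the affine building, and then verify existence (the edge relations) and uniqueness separately.

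For the construction, fix an internal vertex $a$ of $D(w)$. By coherence condition (2) there is a geodesic $\gamma$ in $D(w)$ from $\bullet$ through $a$ to some exterior vertex $v_i$. Since $f(\bullet)=t^0$ and $f(v_i)$ satisfy $\d(f(\bullet),f(v_i))=\mu_i=\d(\bullet,v_i)$, building axiom (2) guarantees an apartment $\Sigma$ containing both. Coherence condition (3) ensures that the sequence of edge labels along $\gamma$ increments weights consistently, so $\gamma$ lifts to a discrete path in $\Sigma$ starting at $f(\bullet)$ and ending at $f(v_i)$. I declare $f(a)$ to be the vertex of $\Sigma$ at the position of $a$ along this lift.

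Next I would check that $f(a)$ is independent of the choices of $\gamma$, $v_i$ and $\Sigma$. Apartment independence follows from building axiom (3), since any two apartments containing $f(\bullet)$ and $f(v_i)$ are related by an isomorphism fixing both endpoints, which transports the lift. For independence from $\gamma$ and $v_i$, I would argue by induction on $\d(\bullet,a)$: Lemma~\ref{lem:geod} propagates coherence of the pair $(\bullet,v_i)$ to coherence on subpaths, and the inductive hypothesis on vertices closer to $\bullet$ forces a common value. Once $f$ is defined on all of $V(D(w))$, verification of the edge relation $\d(f(a),f(b))=\gamma$ for a general edge $a\stackrel{\gamma}{\to}b$ reduces, using coherence (3) once more, to extending a geodesic through $a$ by the edge to $b$ inside a common apartment.

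Uniqueness is where I would invoke the weight-valued triangle inequality on $\Gr$ coming from the building. Given any extension $g$, for each internal vertex $a$ pick a geodesic through $a$ from $\bullet$ to some $v_i$. The inequality applied along this path, together with the equality $\d(g(\bullet),g(v_i))=\mu_i$, forces both $\d(g(\bullet),g(a))=\d(\bullet,a)$ and $\d(g(a),g(v_i))=\d(a,v_i)$, and moreover forces $g(a)$ to sit in an apartment with $f(\bullet)$ and $f(v_i)$ at precisely the prescribed position along a building geodesic. Combining this with the edge constraints at every neighbour of $a$ should pin down $g(a)=f(a)$.

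The main obstacle will be the independence step: if $a$ lies on two geodesics ending at distinct $v_i$ and $v_j$, reconciling the two apartment-based definitions requires handling the triple $f(\bullet),f(v_i),f(v_j)$, which a priori need not lie in a single apartment. I expect the cleanest route is the inductive approach mentioned above, extending $f$ along a single geodesic tree rooted at $\bullet$ that visits every internal vertex and then using axiom (3) to check each additional diskoid edge as a local compatibility condition driven by coherence (3).
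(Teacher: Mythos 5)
Your construction and uniqueness steps match the paper's: lift each geodesic from $\bullet$ to a boundary vertex $v_i$ into an apartment containing $t^0$ and $f(v_i)$, use coherence condition (2) to reach every internal vertex, and note that the lift with prescribed edge weights is forced. The gap is in the step you yourself flag as the main obstacle, and the induction you propose does not close it. Inducting on $\d(\bullet,a)$ cannot work as stated: knowing $f$ on vertices closer to $\bullet$ does not determine $f(a)$, because a vertex at prescribed distance $\gamma_k$ from $f(a^-)$ is far from unique in the building --- the value $f(a)$ is pinned down only by the far anchor $f(v_i)$, so the whole difficulty of reconciling the lifts anchored at $f(v_i)$ and at $f(v_j)$ remains. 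The same unresolved issue reappears in your edge-verification step, where you simply assert a ``common apartment'' for the data attached to $a$ and $b$; when $a$ and $b$ lie on geodesics to distant boundary vertices, the triple $t^0$, $f(v_i)$, $f(v_j)$ need not lie in one apartment, which is exactly the problem.

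The missing idea is an induction on the boundary separation $j-i$ of the two endpoints, driven by planarity of the disk. When $j-i\leq 1$, the distance $\d(f(v_i),f(v_j))$ is zero or minuscule, so $\{f(v_i),f(v_j)\}$ is a simplex of the building; by the building axioms there is an apartment containing it together with $t^0$, and by convexity that apartment contains both geodesic lifts, which therefore agree at $a$. When $j-i>1$, take any geodesic $\gamma''$ from $\bullet$ to $v_{i+1}$: since the two given geodesics together with the boundary arc from $v_i$ to $v_j$ enclose a region of the disk, $\gamma''$ must cross one of them at a vertex $v'$ lying beyond $a$. Splicing at $v'$ (legitimate by Lemma~\ref{lem:geod} and coherence at $\bullet$) produces a geodesic through $a$ ending at $v_{i+1}$, and the induction closes. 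Your alternative of defining $f$ along a single geodesic tree and checking the remaining edges locally only relocates this same reconciliation problem into the edge checks; without the interpolation through intermediate boundary vertices, neither version of your argument goes through.
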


\begin{proof} Let $f\in Q(A(\lambda,\vmu))$. Since the boundary cycles of $A(\vlambda,\vmu)$ and $D(w)$ are the same, we only need extend $f$ to the interior vertices. Let $v_i$ be the exterior vertices of $D(w)$, then $\d(f(\bullet),f(v_i))=\mu_i$ by definition and $\mu_i$ is the length of any geodesic $\gamma$ between $\bullet$ and $v_i$. Considering $\Gr$ as the vertices of associated affine building, let $\Sigma$ be an apartment containing $f(\bullet)=t^0$ and $f(v_i)$. We can choose coordinates on $\Sigma$ such that $f(v_i)$ has position $\mu_i$ and $t^0$ has position $0$. It follows that there is a unique geodesic $\tilde{\gamma}\in\Sigma$ between $t^0$ and $f(v_i)$ with the same sequence of edge weights as $\gamma$. Thus we can extend $f$ by setting $f(\gamma_i)=\tilde{\gamma}_i$. Since $w$ is coherent, by condition $2$ in the definition of coherence it follows that $f$ is now defined for any vertex of $D(w)$. 

Next we must show that this extension is consistent on vertices $v$ in the interior of $D(w)$. Suppose that we have two geodesics $\gamma$ and $\gamma'$ from $\bullet$ to $v_i$ and $v_j$ for some $i<j$ such that $v\in\gamma\cap\gamma'$. By induction on $j-i$, we will show that $f(v)$ is well defined. In the case that $j-i\leq 1$, this is obvious since there exists an apartment $\Sigma$ containing $t^0$, $f(v_i)$ and $f(v_j)$ since $\d(f(v_i),d(v_j))$ is either zero or minuscule, so $\{f(v_i),f(v_j)\}$ is a 0 or 1 simplex in $\Delta$. Since $\Sigma$ contains the convex hull of $t^0$, $f(v_i)$ and $f(v_j)$, both $\gamma$ and $\gamma'$ lie in $\Sigma$ so they both define $f(v)$ in the same way. In the case that $j-i>1$, consider any geodesic $\gamma''$ from $t^0$ to $f(v_{i+1})$. Since the paths $\gamma$ from $v_i$ to $v$ and $\gamma'$ from $v_j$ to $v$ form a closed region with the boundary of the web, it follows that $\gamma''$ intersects either $\gamma$ or $\gamma'$ at some point between $v$ and $v_i$ or $v$ and $v_j$. Without loss of generality, assume that $\gamma$ and $\gamma''$ intersect at some point $v'$ after $v$. Then the path along $\gamma$ from $\bullet$ to $v'$ and then from $v'$ to $v_{i+1}$ is a geodesic passes through $v$, and so by induction we are done.

Since $f$ is now well defined, we need to check that $f$ is indeed a configuration. That is, we must show that $$\d(f(a),f(b))=\d(a,b)$$ for $a$, $b$ adjacent vertices. Without loss of generality, there exists geodesics $\gamma$ and $\gamma'$ from $\bullet$ to $v_i$ and $v_j$ for some $i\leq j$ such that $a\in\gamma$ and $b\in\gamma'$. If $j-i\leq 1$, then there exists an apartment $\Sigma$ containing $f(\bullet)$, $f(v_i)$ and $f(v_j)$ as above and $\Sigma$ contains both geodesics $\gamma$ and $\gamma'$. There are coordinates on this apartment such that $t^0$ has position $0$, $f(v_i)$ has position $\d(\bullet,v_i)$ and $f(v_j)$ has position $\d(\bullet,v_j)$. By construction $f(a)$ has position $\d(\bullet,a)$ and $f(b)$ has position $\d(\bullet,b)$, but then since $w$ is coherent $\d(\bullet,b)-\d(\bullet,a)\in W\d(a,b)$, $f(a)$ and $f(b)$ have adjacent positions. Thus it follows that $\d(f(a),f(b))=\d(a,b)$ as needed. If $j-i>1$ consider any geodesic $\gamma''$ from $\bullet$ to $v_{i+1}$. As before it intersects either $\gamma$ in between $v_i$ and $a$ or $\gamma'$ in between $v_j$ and $b$. In either case, we can replace one of $v_i$ or $v_j$ with $v_{i+1}$ and a suitable geodesic formed from $\gamma''$ and $\gamma$ or $\gamma'$. By induction we see that $f\in Q(D(w))$.

Finally, this extension is unique since there was a unique choice of $f(v)$ for any internal vertex.
\end{proof}

As a corollary, we have the following result:

\begin{corollary}\label{cor:basis} Let $\vlambda$ be a sequence of dominant minuscule weights. Let $W$ be a set of coherent webs in one-to-one correspondence with $\vmu\in P_\vlambda$ such that $w\in W$ has a distinct associated path $\vmu\in P_\vlambda$. Then $\Psi(W)$ forms a basis for $\Hom_{\SL}(\C,V_\vlambda)$ and the change of basis from $\Psi(W)$ to the Satake basis is upper unitriangular with respect to the partial ordering of $P_\vlambda$.
\end{corollary}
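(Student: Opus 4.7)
The plan is to deduce the corollary directly from Theorem~\ref{thm:expansion} by assembling the data for individual webs into a single change-of-basis matrix. First I would extend the partial order on $P_\vlambda$ to a total order by choosing any linear extension, so that $\vmu' < \vmu$ in the original partial order implies $\vmu'$ precedes $\vmu$ in the total order. Label the bijection in the hypothesis by writing $w_\vmu$ for the unique member of $W$ whose associated path is $\vmu$, and then list both the Satake basis $\{[\overline{Q(A(\vlambda,\vmu))}]\}_{\vmu \in P_\vlambda}$ and the web vectors $\{\Psi(w_\vmu)\}_{\vmu \in P_\vlambda}$ in this fixed total order.

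Next I would form the matrix $M$ whose $(\vmu',\vmu)$ entry is the coefficient of $[\overline{Q(A(\vlambda,\vmu'))}]$ in the expansion of $\Psi(w_\vmu)$ in the Satake basis. Part~(1) of Theorem~\ref{thm:expansion} (combined with the preceding corollary, which identifies all possibly nonzero Satake coefficients of $\Psi(w_\vmu)$ with components indexed by paths in the image of $\pi$) shows that $M_{\vmu',\vmu} = 0$ whenever $\vmu' \not\leq \vmu$, while part~(2) gives $M_{\vmu,\vmu} = 1$. With the chosen linear extension, a nonzero off-diagonal entry $M_{\vmu',\vmu}$ forces $\vmu' < \vmu$ in the partial order, hence $\vmu'$ precedes $\vmu$ in the total order, so $M$ is upper unitriangular.

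An upper unitriangular matrix is invertible, so $\Psi(W)$ is linearly independent, and since $|W| = |P_\vlambda|$ equals the dimension of $\Hom_{\SL}(\C,V_\vlambda)$ (by the Geometric Satake correspondence and the parametrization of components of $F(\vlambda)$ by $P_\vlambda$), the set $\Psi(W)$ is a basis. The matrix $M$ itself is by construction the change-of-basis matrix from $\Psi(W)$ to the Satake basis, so the second claim is established as well.

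There is no real obstacle here: the entire content has already been compressed into Theorem~\ref{thm:expansion}, and the only work left is the bookkeeping of recognizing that the partial-order constraint on nonzero entries of $M$, together with unit diagonal, is precisely the definition of upper unitriangular with respect to the partial ordering on $P_\vlambda$. The one point worth stating carefully is that the hypothesis that the associated paths are \emph{distinct} and cover all of $P_\vlambda$ is exactly what gives $M$ its square shape and full set of diagonal $1$'s; without this hypothesis one would only get a triangular rectangular matrix and could not conclude the basis property.
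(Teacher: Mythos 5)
Your proposal is correct and matches the paper's intent: the corollary is stated there as an immediate consequence of Theorem~\ref{thm:expansion}, and the argument you spell out (choose a linear extension of the partial order, observe the change-of-basis matrix to the Satake basis is upper unitriangular by parts (1) and (2), hence invertible, and count $|P_\vlambda| = \dim \Hom_{\SL}(\C,V_\vlambda)$ via the parametrization of components of $F(\vlambda)$) is exactly the standard bookkeeping the paper leaves implicit. No gaps.
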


\section{Triangular Diagrams}

Our goal now is to produce a set of coherent webs and to do this we will adapt Westbury's method of triangular diagrams. In \cite{Westbury:triangles}, Westbury introduces the idea of a triangular diagram. He defines a {\em triangular diagram} as a web drawn in a triangle rather than a disk, with the vertices of the triangle boundary labelled $A$, $X$ and $Y$. Then $A$, $X$ and $Y$ correspond to vertices in the dual diskoid and the edges $AY$ and $AX$ must be geodesics when considered as paths in the dual diskoid. Westbury also defines the notion of an {\em irreducible triangular diagram}: a triangular diagram is said to be irreducible if there exists no face $B$ such that there exists geodesics in the dual diskoid from $A$ to $X$ and $A$ to $Y$ both passing through the vertex corresponding to $B$. The length of a triangular diagram is the number of web edges incident to the section of the boundary $XY$.

Since we want to apply the ideas of the previous section to webs, it is natural to extend this definition to include the condition that the dual diskoid has coherent geodesics at $A$. We consider the point $A$ to be the marked point of the web and is called the vertex of the triangular diagram. The face containing $A$ is the exterior face and the faces along the boundary $XY$ are the topmost faces.

In order to produce webs, Westbury constructs irreducible diagrams of length $1$ and defines a growth algorithm to calculate the product of triangular diagrams. In the next two sections we will generalize these two ideas to the $\SL$ case and then show that the resultant webs are coherent. Constructing one web associated to each $\vmu\in P_\vlambda$ then results in a basis by corollary~\ref{cor:basis}.

\subsection{Irreducible diagrams of length one}

For each weight in the Weyl orbit of a dominant minuscule weight we must produce an irreducible triangular diagram of length one. These will be our building blocks to construct diagrams with larger lengths. Recall that if we consider the weights of $\SL$ to be the set $\mathbb{Z}^n/(1,1,\cdots,1)$, a weight that lies in the Weyl orbit of dominant minuscule weight $\omega_k$ is a sequence of $k$ ones and $n-k$ zeroes. Let $\omega$ be such a weight, then $\omega$ can be written uniquely as a sum of fundamental weights with coefficients in $\{-1,0,1\}$ in the following way: let $\omega=(c_1,c_2,\cdots,c_n)$, with $c_i\in\{0,1\}$ then $\omega=\sum_{j} (c_j-c_{j+1})\omega_j$. Let $l_1<l_2<\cdots$ be the sequence of $j$ such that $c_j-c_{j+1}=-1$ and $r_1<r_2<\cdots$ be the sequence of $j$ such that $c_j-c_{j+1}=1$. Then we have either $r_1<l_1<r_2<l_2<\cdots$ or $l_1<r_1<l_2<r_2<\cdots$. We will produce the triangular diagram for the first case, the second is similar. For convenience, edges of a web will be labelled with integers $0<i<n$ corresponding to a label $\omega_i$ or sometimes $0$, in which case the edge can be considered an edge of weight $0$ which does not exist in the corresponding web. The condition on the edge labels at each vertex in a web becomes the condition that the sum of the labels of the incoming edges must be equal to the sum of the outgoing edges modulo $n$. 

\begin{theorem}
The following is a irreducible triangular diagram which we denote $T_\omega$:

$$
\begin{tikzpicture}[ar/.style={postaction={decorate,decoration={markings,mark=at position .5 with {\arrow{>}}}}}]
\coordinate (b) at (0,0);
\draw (b) node[below] {$A$} -- (60:4) node[right] {$Y$} -- (120:4) node[left] {$X$}-- (b);
\coordinate (m) at ($ (60:4)!0.5!(120:4) $);
\draw[dashed] (m) -- ($ (m)!0.2!(b) $);
\draw ($ (m)!0.2!(b) $) -- ($ (m)!0.3!(b) $) node[right,midway] {${\scriptscriptstyle f(r_3)}$};
\draw [ar] ($ (m)!0.4!(b) $) -- ($ (m)!0.3!(b) $) node[left,midway] {${\scriptscriptstyle f(l_2)}$};
\draw [ar] ($ (m)!0.4!(b) $) -- ($ (m)!0.5!(b) $) node[right,midway] {${\scriptscriptstyle f(r_2)}$};
\draw [ar] ($ (m)!0.6!(b) $) -- ($ (m)!0.5!(b) $) node[left,midway] {${\scriptscriptstyle f(l_1)}$};
\draw [ar] ($ (m)!0.6!(b) $) -- ($ (m)!0.7!(b) $) node[right,midway] {${\scriptscriptstyle f(r_1)}$};
\draw [ar] ($ (m)!0.3!(b) $) -- ($ (60:4)!0.3!(b) $) node[right] {$r_3$};
\draw [ar] ($ (120:4)!0.4!(b) $) node[left] {$l_2$} -- ($ (m)!0.4!(b) $);
\draw [ar] ($ (m)!0.5!(b) $) -- ($ (60:4)!0.5!(b) $) node[right] {$r_2$};
\draw [ar] ($ (120:4)!0.6!(b) $) node[left] {$l_1$} -- ($ (m)!0.6!(b) $);
\draw [ar] ($ (m)!0.7!(b) $) -- ($ (60:4)!0.7!(b) $) node[right] {$r_1$};
\end{tikzpicture}$$\label{thm:lengthone}
\end{theorem}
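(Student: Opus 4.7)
The plan is to verify each defining condition of an irreducible triangular diagram with coherent geodesics at $A$: that $T_\omega$ is a valid web, that the sides $AX$ and $AY$ are geodesics in the dual diskoid, that the dual has coherent geodesics at $A$, that only one web edge meets $XY$, and that no interior face witnesses reducibility.

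First I would make the internal spine labels explicit by setting $f(r_j)$ (resp.\ $f(l_i)$) equal to the partial sum, reduced to a representative in $\{1,\dots,n-1\}$ mod $n$, of the external labels lying above the internal edge in question. With this choice every trivalent spine vertex tautologically satisfies the condition that the sum of incoming labels equals the sum of outgoing labels mod $n$, since the vertex simply separates the running sum into one external contribution and the updated internal continuation. The alternation $r_1 < l_1 < r_2 < l_2 < \cdots$ guarantees that each partial sum is nonzero mod $n$, so every internal edge carries a genuine minuscule label, and the up/down arrows drawn match the sign of the associated telescoping difference. Reading the boundary clockwise from $A$ recovers the signed expansion $\omega = \sum_j(c_j-c_{j+1})\omega_j$, and only the topmost (dashed) spine edge meets $XY$, giving length one.

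Next I would establish that the sides are geodesics by embedding the dual diskoid $D(T_\omega)$ into a single apartment $\Sigma$ of the affine building of $\PGL$. I would send $A \mapsto t^0$, place $X$ at the sum of fundamental coweights indexed by the $l_i$, $Y$ at the sum indexed by the $r_j$, and each spine or side vertex at the partial coweight sum prescribed by its position in the zigzag; by construction the $f$-labels, the $r_j$'s and the $l_i$'s become the consecutive differences, so this is a bona fide configuration. Inside $\Sigma$ the side $AY$ has total weight $\sum_j \omega_{r_j}$, which equals $\d(A,Y)$; any competing path in $D(T_\omega)$ from $A$ to $Y$ lifts into the same apartment via the axiom that two simplices share a common apartment, and therefore cannot be shorter, so $AY$ is a geodesic, and symmetrically for $AX$. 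Coherent geodesics at $A$ then follow at once, since geodesics inside a single apartment are coherent by construction.

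For irreducibility I would suppose a face $B$ lies on geodesics from $A$ to both $X$ and $Y$; then $\d(A,X) = \d(A,B) + \d(B,X)$ and $\d(A,Y) = \d(A,B) + \d(B,Y)$ inside $\Sigma$, which forces $B$ to sit simultaneously on both sides of the triangle and hence $B=A$, contradicting the assumption that $B$ is a distinct face. The main obstacle I anticipate is the apartment embedding in the second step: one must pin down coweight coordinates for every spine vertex so that the prescribed $f$-labels are exactly the consecutive differences, and check that the mapping is injective on faces of $D(T_\omega)$ — this is precisely where the alternation condition $r_1 < l_1 < r_2 < \cdots$ is used, as it ensures the zigzag does not fold back on itself inside $\Sigma$. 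Once the embedding is set up, the remaining claims reduce to direct bookkeeping inside the apartment.
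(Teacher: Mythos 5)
Your strategy is genuinely different from the paper's. The paper stays entirely inside the abstract dual diskoid: it first proves by induction that $0<f(r_i)\le r_i$ and $0<f(l_i)\le l_i$ (so no internal label vanishes), and then shows $AX$ and $AY$ are coherent geodesics by reducing an arbitrary competing path through loop removal, triangle moves, and diamond moves; the crux is the weight inequality $\omega_{f(r_i)}+\omega_{r_{i+1}}\ge\omega_{l_i}+\omega_{f(r_{i+1})}$, deduced from the chain $r_{i+1}>l_i>r_i$ forcing $r_{i+1}>f(r_{i+1})>f(r_i)$. You instead realize the whole dual diskoid as a configuration in a single apartment and bound every path from below by the distance between the images of its endpoints. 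That route is viable and has a real advantage: it yields coherence for all pairs $(A,v)$ at once, rather than only for $(A,X)$ and $(A,Y)$ followed by an appeal to Lemma~\ref{lem:geod}, and the verification that the zigzag of partial sums of fundamental weights is a genuine configuration is exactly the place where the interlacing $r_1<l_1<r_2<\cdots$ enters (the supports of $\omega_{l_k}-\omega_{r_k}$ are disjoint, so the differences are $0$--$1$ vectors in the correct Weyl orbits).

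Two of your steps, however, conceal precisely the work the paper does explicitly, and one is a gap. First, the claim that the alternation makes every partial sum nonzero mod $n$ is exactly the inductive estimate $0<f(r_i)\le r_i$, $0<f(l_i)\le l_i$; it is needed both for the labels to be nonzero and for your vertices to land at actual minuscule steps, so it must be proved, not asserted. Second, and more seriously, ``therefore cannot be shorter'' is doing all the work: the weight of a path in the diskoid is a sum of edge labels, and to bound it below by $\d(f(A),f(Y))$ you need the superadditivity of the dominance order under concatenation (the dominant representative of a sum is at most the sum of the dominant representatives), applied step by step along the path, \emph{and} you need edges traversed against their orientation to be counted with the dual weight $\omega_{n-c}$ rather than $\omega_c$ --- otherwise the sum of labels is not the sum of directed distances and the apartment bound does not apply. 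This is the content of the paper's diamond-move computation and cannot be waved through. Finally, your irreducibility argument (``$B$ must sit on both sides, hence $B=A$'') does not follow from positions in the apartment alone, since the configuration map need not be injective and a face is not determined by where it lands; a direct check that no vertex other than $A$ lies on geodesics to both $X$ and $Y$ is required (though, to be fair, the paper's own proof does not verify irreducibility either).
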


It should be noted that the diagram above is by no means the only irreducible length one diagram corresponding to the given weight. In general there is at least one other diagram: the proof below follows if all inequalities and order of the edge labels are reversed. Note that there may be no irreducible diagram corresponding to an arbitrary ordering of the labels $l_i$ and $r_i$. For $n\leq 3$ this ambiguity does not appear since the lengths of the sequences $l_i$ and $r_i$ are at most $1$. In section~\ref{sec:sl4}, it will be shown that in the $\SL[4]$ case it is possible to minimize the number of vertices in the triangular diagram by carefully choosing which diagram should be used for $T_\omega$.

\begin{proof}
The edges from the left to the middle are labelled $l_i$, those from the middle to the right $r_i$ and the interior edges are labelled $f(r_i)$ or $f(l_i)$ with $f(r_1)=r_1$. At each vertex the sum of the incoming edges equals the sum of the outgoing edges mod $n$. This uniquely specifies the $f(r_i)$ and $f(l_i)$ and we will show that $0<f(r_i)\leq r_i$ and $0<f(l_i)\leq l_i$ for all $i$. Clearly this is true for $i=1$, since $0<f(r_1)=r_1$ and then since $f(l_1)=l_1-f(r_1)$ and $l_1>f(r_1)=r_1>0$, we have $0<f(l_1)<l_1$. By induction, it is true for all $i$: if $0<f(r_{i-1})\leq r_{i-1}$ and $0<f(l_{i-1})\leq l_{i-1}$, then we have $f(r_i)=r_i-f(l_{i-1})$ and since $r_i>l_{i-1}\geq f(l_{i-1})>0$, we have $0<f(r_i)<r_i$ and similarly $0<f(l_i)<l_i$. Note that since none of the edges are labelled $0$, they all appear in the web.

To establish that this is a triangular diagram we must show that $A$ has coherent geodesics and that $AX$ and $AY$ are geodesics. Consider figure~\ref{fig:lengthonedual}, the dual diskoid for the above diagram.
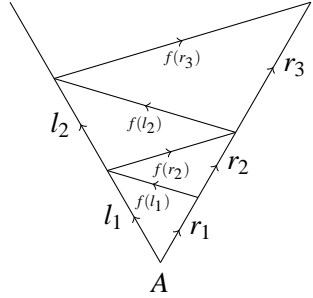
\begin{figure*}
\begin{tikzpicture}[ar/.style={postaction={decorate,decoration={markings,mark=at position .5 with {\arrow{>}}}}}]
\draw [ar] (0,0) node[below] {$A$} -- (60:1) node[midway,right] {$r_1$};
\draw [ar] (60:1) -- (60:2) node[midway,right] {$r_2$};
\draw [ar] (60:2) -- (60:4) node[midway,right] {$r_3$};
\draw [ar] (0,0) -- (120:1.414) node[midway,left] {$l_1$};
\draw [ar] (120:1.414) -- (120:2.828) node[midway,left] {$l_2$}; 
\draw (120:2.828) -- (120:4);
\draw [ar] (60:1) -- (120:1.414) node[midway,below] {${\scriptscriptstyle f(l_1)}$};
\draw [ar] (120:1.414) -- (60:2) node[midway,below] {${\scriptscriptstyle f(r_2)}$};
\draw [ar] (60:2) -- (120:2.828) node[midway,below] {${\scriptscriptstyle f(l_2)}$};
\draw [ar] (120:2.828) -- (60:4) node[midway,below] {${\scriptscriptstyle f(r_3)}$};
\end{tikzpicture}
\caption{The dual diskoid of a triangle of length $1$.}
\label{fig:lengthonedual}
\end{figure*}
By Lemma~\ref{lem:geod}, since the diagram has length $1$ and there are no internal vertices, it suffices to show that $AX$ and $AY$ are geodesics and that the pairs $(A, X)$ and $(A, Y)$ have coherent geodesics.

Suppose we have a path $\gamma$ between $A$ and $X$ in the dual diskoid. Then we want to show that the total weight of $\gamma$ is greater than the total weight of $AX$. We will apply a series of operations to $\gamma$ that reduce its weight and result in the path $AX$. If $\gamma$ has any loops, remove them, this lowers the total weight of $\gamma$. Suppose that the path $\gamma$ travels along two edges of a triangular face:
$$\begin{tikzpicture}
\draw [line width=1pt,postaction={decorate,decoration={markings,mark=at position .25 with {\arrow{>}},mark=at position .75 with {\arrow{>}}}}] (0,0) -- (60:1) node[midway,below right] {$a$} -- (120:1) node[midway,above] {$b$};
\draw [postaction={decorate,decoration={markings,mark=at position .5 with {\arrow{>}}}}] (0,0) -- (120:1) node[midway,below left] {$a+b$};
\end{tikzpicture}$$
Then using the edge labelled $a+b$ rather than the two edges labelled $a$ and $b$ results in lowering the weight of that segment of the path from $\omega_a+\omega_b$ to $\omega_{a+b}$ where $a+b$ is reduced modulo $n$. These are called triangle moves in \cite{fkk}. Thus we can assume that $\gamma$ has no loops or available triangle moves. Thus if $\gamma$ is not already $AX$, it travels some number of edges up the left side (possibly $0$) then crosses to the right. It must cross on an edge labelled $f(r_i)$ otherwise the path would contain a loop or triangle move. Then the path cannot travel along $f(l_i)$ or $r_i$ otherwise there would also be a triangle move. Thus the path travels along $r_{i+1}$. So the path travels along the the left side of the following diamond:
$$\begin{tikzpicture}
\draw [line width=1pt,postaction={decorate,decoration={markings,mark=at position .25 with {\arrow{>}},mark=at position .75 with {\arrow{>}}}}] (0,0) -- (60:1) node[midway,below right] {$f(r_i)$} -- (90:1.732) node[midway,above right] {$r_{i+1}$};
\draw [postaction={decorate,decoration={markings,mark=at position .25 with {\arrow{>}},mark=at position .75 with {\arrow{>}}}}] (0,0) -- (120:1) node[midway,below left] {$l_i$} -- (90:1.732) node[midway,above left] {$f(r_{i+1})$};
\draw [postaction={decorate,decoration={markings,mark=at position .5 with {\arrow{>}}}}] (60:1) -- (120:1) node[midway,above] {${\scriptscriptstyle f(l_i)}$};
\end{tikzpicture}$$
By doing a diamond move, and swapping the path to the other side, $\gamma$ would travel further along the left side. For this move to lower the weight of the path, we must have $\omega_{f(r_i)}+\omega_{r_{i+1}}\geq\omega_{l_i}+\omega_{f(r_{i+1})}$. Since $l_i-f(r_i)=f(l_1)=r_{i+1}-f(r_{i+1})$ we have $f(r_i)+r_{i+1}=l_i+f(r_{i+1})$ so the weights differ by a number of roots. We have $r_{i+1}>l_i>r_i\geq f(r_i)$ so $r_{i+1}>l_i>f(r_i)$ and thus $r_{i+1}>f(r_{i+1})>f(r_i)$. But this means that we have $\omega_{f(r_i)}+\omega_{r_{i+1}}\geq\omega_{l_i}+\omega_{f(r_{i+1})}$ as needed. Thus by a sequence of removing loops, triangle and diamond moves we can reduce to path so that it only travels along $AX$ and thus $AX$ is a geodesic and the pair $(A, X)$ has coherent geodesics.

The proof for $AY$ is similar.
\end{proof}

\subsection{Product of Triangular Diagrams}

Given two triangular diagrams $A$ and $B$ we can define their product $A\otimes B$. Placing the diagrams for $A$ and $B$ side by side as in Figure~\ref{fig:product}, we can fill the diamond $C$ so that the resultant diagram is a triangular diagram.

\begin{figure}[!ht]
\begin{tikzpicture}
\draw (0,0) -- (60:4) -- (120:4) -- cycle;
\draw (60:2) -- ($ (60:2) + (120:2) $) -- (120:2);
\draw ($ (60:2.66) + (120:0.66) $) node {$B$};
\draw ($ (60:0.66) + (120:2.66) $) node {$A$};
\draw ($ (60:1) + (120:1) $) node {$C$};
\end{tikzpicture}\caption{Product of Triangular Diagrams $A$ and $B$ via the diamond $C$}\label{fig:product}
\end{figure}
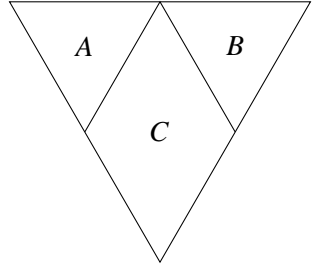

Let $i_1,\cdots,i_l$ be the labels of the (outgoing) edges on the right side of triangle $A$ from bottom to top and $i'_1,\cdots,i'_{l'}$ the labels on the (incoming) edges on the left side of $B$. Then the diamond $C$ is defined to be the result of taking the diamond: $$\begin{tikzpicture}[ar/.style={postaction={decorate,decoration={markings,mark=at position .5 with {\arrow{>}}}}}]
\draw (0,0) -- (60:4) -- +(120:4) -- (120:4) -- (0,0);
\foreach \ind in {0.5,1,3,3.5} {
	\foreach \ing in {4,3.5,3,1.5,1,0.5} {
		\draw[ar] ($ (120:\ing) + (60:\ind) $) -- ($ (120:\ing) - (120:0.5) + (60:\ind) $);
		\draw[ar] ($ (60:\ing) - (60:0.5) + (120:\ind) $) -- ($ (60:\ing) + (120:\ind) $);
	}
	\draw[dashed] ($ (120:2.5) + (60:\ind) $) -- ($ (120:1.5) + (60:\ind) $);
	\draw[dashed] ($ (60:2.5) + (120:\ind) $) -- ($ (60:1.5) + (120:\ind) $);
	}
	\draw ($ (120:4) + (60:0.5) $) node[above left] {$i_1$};
	\draw ($ (120:4) + (60:1) $) node[above left] {$i_2$};
	\draw ($ (120:4) + (60:3) $) node[above left] {$i_{l-1}$};
	\draw ($ (120:4) + (60:3.5) $) node[above left] {$i_l$};
	\draw ($ (60:4) + (120:0.5) $) node[above right] {$i'_1$};
	\draw ($ (60:4) + (120:1) $) node[above right] {$i'_2$};
	\draw ($ (60:4) + (120:3) $) node[above right] {$i'_{l'-1}$};
	\draw ($ (60:4) + (120:3.5) $) node[above right] {$i'_{l'}$};
\end{tikzpicture}$$ and replacing each vertex of degree 4 via the following rule (evaluated from the top most vertex to the bottom most):

\begin{equation*}\begin{split}
\begin{tikzpicture}[ar/.style={postaction={decorate,decoration={markings,mark=at position .5 with {\arrow{>}}}}},scale=0.6]
\draw[ar] (120:1) node[above left] {$a$} -- (0,0); \draw[ar] (0,0) -- (300:1);
\draw[ar] (240:1) -- (0,0); \draw[ar] (0,0) -- (60:1) node[above right] {$b$};
\end{tikzpicture}
\end{split} = 
\begin{cases}
\begin{split}\begin{tikzpicture}[ar/.style={postaction={decorate,decoration={markings,mark=at position .5 with {\arrow{>}}}}},scale=0.6]
\draw[ar] (120:1) node[above left] {$a$} -- (0,0.3); \draw[ar] (0,-0.3) -- (300:1) node[below right] {$0$};
\draw[ar] (240:1) node[below left] {$0$} -- (0,-0.3); \draw[ar] (0,0.3) -- (60:1) node[above right] {$a$};
\draw[ar] (0,-0.3) -- (0,0.3) node[midway,left] {$0$};
\end{tikzpicture}\end{split}
& \text{$a=b$}, \\
\begin{split}\begin{tikzpicture}[ar/.style={postaction={decorate,decoration={markings,mark=at position .5 with {\arrow{>}}}}},scale=0.6]
\draw[ar] (120:1) node[above left] {$a$} -- (0,0.3); \draw[ar] (0,-0.3) -- (300:1) node[below right] {$a$};
\draw[ar] (240:1) node[below left] {$b$} -- (0,-0.3); \draw[ar] (0,0.3) -- (60:1) node[above right] {$b$};
\draw[ar] (0,-0.3) -- (0,0.3) node[midway,left] {$b-a$};
\end{tikzpicture}\end{split}
& \text{$a\neq b$}.
\end{cases}
\end{equation*}

Note that as previously mentioned an edge with $0$ weight does not appear in the web, but in order to simplify arguments we will leave it in. Taking the dual diskoid still makes sense with edges labelled $0$, the corresponding web variety is unchanged since the we can contract edges of weight $0$ and delete the resulting $2$ faces to obtain the dual diskoid of the web. We can now define the left and right diamond moves. Consider a path in the dual diskoid of a diamond:
$$\begin{tikzpicture}[ar/.style={postaction={decorate,decoration={markings,mark=at position .5 with {\arrow{>}}}}}]
\foreach \ind in {0,0.5,1,3,3.5,4} {
	\foreach \ing in {4,3.5,3,1.5,1,0.5} {
		\draw[ar] ($ (120:\ing) - (120:0.5) + (60:\ind) $) -- ($ (120:\ing) + (60:\ind) $);
		\draw[ar] ($ (60:\ing) - (60:0.5) + (120:\ind) $) -- ($ (60:\ing) + (120:\ind) $);
	}
	\draw[dashed] ($ (120:2.5) + (60:\ind) $) -- ($ (120:1.5) + (60:\ind) $);
	\draw[dashed] ($ (60:2.5) + (120:\ind) $) -- ($ (60:1.5) + (120:\ind) $);
}
\draw[ar] (60:0.5) -- +(-0.5,0);
\draw[ar] (60:1) -- +(-0.5,0);
\draw[ar] ($ (60:1) - (0.5,0) $) -- +(-0.5,0);
\draw[ar] ($ (60:1.5) - (0.5,0) $) -- +(-0.5,0);
\draw[ar] (60:3.5) -- +(-0.5,0);
\draw[ar] ($ (60:3.5) - (3,0) $) -- +(-0.5,0);
\draw[ar] (60:4) -- +(-0.5,0);
\draw[ar] ($ (60:4) - (0.5,0) $) -- +(-0.5,0);
\draw[ar] ($ (60:4) - (3,0) $) -- +(-0.5,0);
\draw[ar] ($ (60:4) - (3.5,0) $) -- +(-0.5,0);
\draw[ar] ($ (60:4) + (120:0.5) $) -- +(-0.5,0);
\draw[ar] ($ (60:4) + (120:0.5) -(3,0) $) -- +(-0.5,0);
\draw[ar] ($ (60:3.5) + (120:3) $) -- +(-0.5,0);
\draw[ar] ($ (60:4) + (120:3) $) -- +(-0.5,0);
\draw[ar] ($ (60:4) + (120:3) -(0.5,0) $) -- +(-0.5,0);
\draw[ar] ($ (60:4) + (120:3.5) $) -- +(-0.5,0);
\end{tikzpicture}$$
The small diamonds inside are given by dual diskoid equivalent to the rule above: 
\begin{equation*} \begin{split}\begin{tikzpicture}[ar/.style={postaction={decorate,decoration={markings,mark=at position .5 with {\arrow{>}}}}}] 
\draw[ar] (0,0) -- (60:1) node[midway,below right] {$a$};
\draw[ar] (0,0) -- (120:1) node[midway,below left] {$b$};
\draw[ar] (60:1) -- ($ (60:1) + (120:1) $) node[midway,above right] {$b$};
\draw[ar] (120:1) -- ($ (60:1) + (120:1) $) node[midway,above left] {$a$};
\draw[ar] (60:1) -- (120:1) node[midway,above] {${\scriptscriptstyle b-a}$};
\end{tikzpicture}\end{split} \text{if $a\neq b$ or} \begin{split}
\begin{tikzpicture}[ar/.style={postaction={decorate,decoration={markings,mark=at position .5 with {\arrow{>}}}}}] 
\draw[ar] (0,0) -- (60:1) node[midway,below right] {$0$};
\draw[ar] (0,0) -- (120:1) node[midway,below left] {$0$};
\draw[ar] (60:1) -- ($ (60:1) + (120:1) $) node[midway,above right] {$a$};
\draw[ar] (120:1) -- ($ (60:1) + (120:1) $) node[midway,above left] {$a$};
\draw[ar] (60:1) -- (120:1) node[midway,above] {${\scriptscriptstyle 0}$};
\end{tikzpicture}\end{split} \text{otherwise}
\end{equation*}

Then, if a path travels along the right two edges of one of the above small diamonds, replacing that portion of the path with the left edges is a left diamond move and replacing the left edges with the right edges is a right diamond move. In either of the cases, doing a left or right $H$ move does not change the weight of the path.

\begin{proposition}\label{prop:triang} The product $A\otimes B$ is a triangular diagram.
\end{proposition}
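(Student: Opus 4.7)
The plan is to verify the three conditions making $A\otimes B$ a triangular diagram: that the left and right boundary arcs $AX$ and $AY$ are geodesics in its dual diskoid, and that the combined dual has coherent geodesics at the vertex $A$. The strategy mirrors the proof of Theorem~\ref{thm:lengthone}: given any path $\gamma$ in the combined dual, I reduce $\gamma$ to a canonical form through weight-non-increasing moves, namely loop removal and triangle moves (each strictly decreasing weight), together with the left and right diamond (H) moves available inside the grid $C$, which preserve weight by the observation made just after the definition of $C$.

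The combined dual is assembled from the duals of $A$, $B$, and $C$, joined along the two interior segments of the large triangle. My tools are: (a) each of $A$ and $B$ is already a triangular diagram, so its left and right boundary arcs are geodesics in its own dual and its vertex has coherent geodesics; (b) H-moves in $C$ preserve weight, so the $C$-portion of a path may be freely redistributed without changing its length; (c) loop removal and triangle moves are weight-non-increasing everywhere in the combined dual. A standard check, analogous to the diamond computation in the proof of Theorem~\ref{thm:lengthone}, is needed to confirm that the small diamonds produced by the substitution rule for $C$ are themselves geodesically well-behaved; this is already implicit in the statement that H-moves preserve weight.

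To show $AX$ is a geodesic, I take an arbitrary path $\gamma$ from $A$ to $X$ in the combined dual, remove loops, perform all available triangle moves, and use the H-moves inside $C$ to push the $C$-portion of $\gamma$ as far to the left as possible without altering its weight. The normalized $\gamma$ then splits into a segment running up the left boundary of $C$ from $A$ to the junction vertex shared with $A$'s sub-dual, followed by a segment in the dual of $A$ from that junction to $X$; the first segment has weight equal to the $C$-portion of $AX$, and the second has weight at least that of the left boundary of $A$ because $A$ is triangular, so $\gamma$ has weight at least that of $AX$. The argument for $AY$ is symmetric, with $B$ playing the role of $A$. Coherence of geodesics at the combined $A$ then follows by a case analysis on the target vertex $v$: the same normalization pushes any geodesic from $A$ to $v$ into a canonical form in which it enters the $A$-, $B$-, or $C$-region at most once, and coherence is inherited from the coherent geodesics inside $A$ and $B$ together with the H-invariance inside $C$.

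The step I expect to be the main obstacle is the combinatorial bookkeeping when $\gamma$ crosses the internal boundaries between $A$, $B$, and $C$ several times; a priori nothing prevents a path from oscillating between regions. I plan to handle this by induction on the number of region crossings: at each crossing an adjacent H-move, triangle move, or loop removal either eliminates the crossing or strictly lowers the total weight, and once $\gamma$ enters each of $A$ and $B$ at most once the normalization above applies directly and the geodesic bounds from the subdiagrams close the argument.
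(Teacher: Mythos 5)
Your overall strategy --- normalizing an arbitrary path by loop removal, triangle moves, and the weight-preserving diamond ($H$) moves inside the diamond region, then comparing against the boundary arcs --- is the same as the paper's, but the step you yourself flag as ``the main obstacle'' is precisely where the argument has a genuine gap, and the fix you propose does not work as stated. You claim that at each crossing between regions ``an adjacent H-move, triangle move, or loop removal either eliminates the crossing or strictly lowers the total weight.'' There is no reason for this to hold locally: a path can leave the diamond into the sub-triangle $A$ across one edge of the interface and return across another, and whether any move is available at the crossing depends on the internal structure of $A$ near that interface, which is an arbitrary triangular diagram. The paper's resolution is global rather than local: writing $C$ for the vertex of the product and $Y$ for the apex of the diamond, the interface arc $AY$ is a geodesic of the product diskoid (it is a geodesic of the sub-triangle $A$ and its weight is unchanged inside the diamond), so if a path meets $AY$ more than once one replaces everything between the first and last intersection points by the corresponding stretch of $AY$ itself, which by Lemma~\ref{lem:geod} cannot increase the weight. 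This reduces in one stroke to paths that enter $A$ exactly once and never re-enter the diamond, with no induction on the number of crossings.

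The second place where you assert rather than prove is the claim that the normalized path ``splits into a segment running up the left boundary of $C$ \dots\ followed by a segment in the dual of $A$.'' Exhausting the available triangle and left diamond moves does not by itself pin the diamond portion of the path to the left edge; the paper closes this with a monotonicity argument: if the first coordinate function $\pi_1$ had a local maximum along the path inside the diamond, one of three local configurations would occur at that vertex, and each admits a triangle or left diamond move, contradicting normalization. Hence $\pi_1$ is monotone, which forces the path along $CA$ and then into $A$. With these two ingredients supplied --- the interface-geodesic shortcut and the no-local-maximum argument --- the remainder of your outline (that $CX$ and $CY$ are geodesics and that coherence at $C$ is inherited from coherence in $A$ and $B$ together with the fixed weights of $CA$ and $CB$) goes through exactly as in the paper.
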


\begin{proof} We will abuse notation and denote the vertex of $A$ by $A$, the vertex of $B$ by $B$ and the vertex of $A\otimes B$ as $C$: $$
\begin{tikzpicture}[scale=0.6]
\draw (0,0) node[below] {$C$} -- (60:4) node[above] {$Z$} -- (120:4) node[midway,above] {$Y$} node[above] {$X$} -- cycle;
\draw (60:2) node[below right] {$B$} -- ($ (60:2) + (120:2) $) -- (120:2) node[below left] {$A$};
\end{tikzpicture}$$
We will show that any path $\gamma$ in the dual diskoid starting at $C$ and ending at a vertex corresponding to a face in $A$ has weight greater than or equal to that of the path starting with $CA$ and then following a geodesic in the dual diskoid of $A$ to the endpoint. First, assume that the given path never intersects itself at any point during the proof, otherwise we would trim the loop in the path to obtain a path with lower weight.

Since the path $\gamma$ must end at a vertex in $A$, it follows that the path must cross $AY$. $AY$ is a geodesic and has the same weight in both the triangle $A$ and the diamond $C$, thus $AY$ is a geodesic in $A\otimes B$. If $\gamma$ crosses $AY$ more than once, then by Lemma~\ref{lem:geod} we can replace the portion of $\gamma$ starting from the first intersection with $AY$ to the last with the corresponding portion of the path $AY$, reducing the weight of $\gamma$. Thus we may assume that $\gamma$ starts at the vertex of the product, eventually enters triangle $A$ and never enters the diamond $C$ again.

Now we consider the segment of the path in the diamond and apply all possible left diamond and triangle moves to this path. Note that is a finite process and never enters a cycle since triangle moves reduce number of edges in the path and left diamond moves result in a path strictly to the left of the previous path. Since left diamond moves maintain the weight and triangle moves drop the weight, it follows that the resulting path has weight less than or equal to that of the original path. Thus any path in $C$ has weight greater than or equal to one between the same vertices with no left diamond moves or triangle moves. Such a path must follow $CA$ and then travel some distance along $AY$.

Consider the diamond $C$ in the oriented as above in the plane. Suppose first coordinate function $\pi_1$ function has local maxima, this can only happen at a vertex and thus $\gamma$ has one of the following three forms at that vertex:
$$\begin{tikzpicture}[ar/.style={postaction={decorate,decoration={markings,mark=at position .5 with {\arrow{>}}}}}] 
\draw[ar,line width=1pt] (0,0) -- (60:1);
\draw[ar] (0,0) -- (120:1);
\draw[ar,line width=1pt] (60:1) -- ($ (60:1) + (120:1) $);
\draw[ar] (120:1) -- ($ (60:1) + (120:1) $);
\draw[ar] (60:1) -- (120:1);
\end{tikzpicture}
\hskip20pt
\begin{tikzpicture}[ar/.style={postaction={decorate,decoration={markings,mark=at position .5 with {\arrow{>}}}}}] 
\draw[ar,line width=1pt] (0,0) -- (60:1);
\draw[ar] (0,0) -- (120:1);
\draw[ar] (60:1) -- ($ (60:1) + (120:1) $);
\draw[ar] (120:1) -- ($ (60:1) + (120:1) $);
\draw[ar,line width=1pt] (60:1) -- (120:1);
\end{tikzpicture}
\hskip20pt
\begin{tikzpicture}[ar/.style={postaction={decorate,decoration={markings,mark=at position .5 with {\arrow{>}}}}}] 
\draw[ar] (0,0) -- (60:1);
\draw[ar] (0,0) -- (120:1);
\draw[ar,line width=1pt] (60:1) -- ($ (60:1) + (120:1) $);
\draw[ar] (120:1) -- ($ (60:1) + (120:1) $);
\draw[ar,line width=1pt] (60:1) -- (120:1);
\end{tikzpicture}
$$
In each of these situations, there is a triangle or left diamond move available. Thus $\pi_1$ has no local maxima and thus since $\gamma$ ends along $AY$, $\gamma$ travels along $CA$ and then it enters $A$ where it continues until it ends. The above argument holds for paths starting at $C$ and ending at a vertex in $B$ after replacing $A$ with $B$, left with right and maxima with minima. 

Now, to see the that the geodesics of $A\otimes B$ at $A$ are coherent, consider any geodesic $\gamma$ from $C$ to a vertex $D$ in $A$. Since $\gamma$ is a geodesic, by above argument $\gamma$ has the same weight as a path $\gamma'$ that follows $CA$ and then stays in $A$ until it reaches $D$. But then $\gamma'$ is a geodesic and by Lemma~\ref{lem:geod} the portion of $\gamma'$ from $A$ to $D$ is also a geodesic. But the weight of this geodesic is independent of the path because $A$ is a triangular diagram and itself has coherent geodesics. Thus the weight of $\gamma$ is the weight of $CA$ plus the weight of any geodesic from $A$ to $D$ and it does not depend on $\gamma$. The same holds for paths from $C$ to a vertex on $D$ in $B$ after replacing $A$ by $B$. It then follows that $CX$ and $CY$ are geodesics.
\end{proof}

\begin{proposition} The operator $\otimes$ on triangular diagrams is associative.
\end{proposition}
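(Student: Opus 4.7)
The plan is to show that both $(A\otimes B)\otimes C$ and $A\otimes(B\otimes C)$ realize the same triangular diagram: three smaller triangles $A$, $B$, $C$ arranged in order along the top edge of a larger triangle, with the trapezoidal region below them filled in by iterated application of the degree-4 vertex resolution rule. The content of associativity is that this trapezoid is filled the same way regardless of how we parenthesize the product.

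First I would make the geometry precise. When forming $A\otimes B$, we fill a rhombic lattice whose outer boundary labels on the two upper sides are the right-side labels of $A$ and the left-side labels of $B$. The resulting triangle $A\otimes B$ then has a right boundary which is a concatenation of the right boundary of $B$ with the lower-right boundary of the just-filled diamond. When we then form $(A\otimes B)\otimes C$, the second rhombic region is glued onto this right boundary and filled using the labels of the left side of $C$. The union of the two rhombic regions is a specific trapezoidal region sitting directly below $A$, $B$, $C$. An entirely analogous description applies to $A\otimes(B\otimes C)$, and yields the same trapezoidal region; only the internal horizontal cut separating the two diamond fillings differs between the two parenthesizations.

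The core of the proof is a local confluence statement for the resolution rule. Because the rule at each degree-4 vertex produces the two new (lower) edge labels as a function only of the two incident (upper) labels, the filling of any rhombic region is determined vertex by vertex, and reorderings that respect the partial order of dependence do not alter the outcome. The two parenthesizations correspond to two such orderings of the degree-4 vertices inside the shared trapezoid, so it suffices to establish a local hexagon identity: given three mutually adjacent vertices, resolving them in the two possible orders yields the same filling. This reduces to a finite case analysis on labels $a$, $b$, $c$, controlled by the two cases of the rule ($a=b$ versus $a\neq b$), and the key identities match the relations already used in the proof of Proposition~\ref{prop:triang} (triangle moves and left/right diamond moves leave the total weight invariant).

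The main obstacle will be the bookkeeping in the hexagon case analysis, since the rule bifurcates according to equality of labels and one must check all combinations (all equal, two equal, all distinct) and verify that the interior edge weights produced by the two orders coincide. Once local confluence is established, a standard diamond-lemma argument promotes it to global confluence: the filling of the entire trapezoid is independent of the order of resolution, and hence $(A\otimes B)\otimes C$ and $A\otimes(B\otimes C)$ are identical as triangular diagrams.
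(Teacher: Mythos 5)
Your proposal is correct, and it actually supplies an argument where the paper offers none: the paper's proof of this proposition is the single sentence ``but this is obvious.'' Your identification of the right picture --- both parenthesizations fill the same trapezoidal region below $A$, $B$, $C$ (three parallelograms, one for each pair of strand families that cross), differing only in the internal cut between the two diamond fillings --- is exactly the content that makes the statement true, and it is worth recording. One remark: the hexagon/local-confluence machinery you invoke is heavier than necessary. Since the resolution rule at a degree-$4$ vertex outputs the two lower labels as a function of the two upper labels alone, the label carried by a strand into any crossing is determined purely by the ordered sequence of crossings it has already traversed; once you check that the two parenthesizations process the same set of crossings (a strand from the right side of $A$ meets first the left strands of $B$, then those of $C$, in both cases, and similarly for the other families --- the counts $pq+ps+rs$ agree) and that both evaluation orders are linear extensions of the same dependency partial order, determinism already forces the fillings to coincide; there is no critical pair to resolve and no case analysis on label equality is needed. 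Your appeal to the triangle and left/right diamond moves from Proposition~\ref{prop:triang} is a misattribution --- those are weight-preserving moves on paths in the dual diskoid used to identify geodesics, not identities satisfied by the filling rule --- but nothing in your argument actually depends on them.
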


\begin{proof}
We must show that $(A\otimes B)\otimes C=A\otimes(B\otimes C)$ for any triangular diagrams $A,B,C$, but this is obvious.
\end{proof}

\subsection{Diagrams from Minuscule Paths}

In general, given a minuscule sequence $\vlambda$, a minuscule path of type $\vlambda$ is a sequence of weights $\vmu$ of $G$ such that $\mu_{i}-\mu_{i-1}\in W\lambda_i$ and $\mu_0=0$. Thus the set $P_\vlambda$ is the set of dominant minuscule path ending at $0$. To each minuscule path we can assign the triangular diagram $T_\vmu=\bigotimes T_{\mu_{i+1}-\mu_i}.$

\begin{lemma}\label{lem:sum}
Given a minuscule path $\vmu$ of length $m$, let $l_\vmu$ and $r_\vmu$ be the weights of the geodesics along the left and right sides of $T_\vmu$, we have $$\mu_m=r_\vmu-l_\vmu.$$
\end{lemma}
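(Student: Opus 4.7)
I will prove the lemma by induction on the length $m$ of $\vmu$. For the base case $m = 1$, we have $T_\vmu = T_{\mu_1}$, and the explicit decomposition $\mu_1 = \sum_j(c_j - c_{j+1})\omega_j = \sum_i \omega_{r_i} - \sum_i \omega_{l_i}$ used to build $T_{\mu_1}$ in Theorem~\ref{thm:lengthone} identifies the $r_i$ (respectively $l_i$) with the right-side (respectively left-side) edge labels of $T_{\mu_1}$, so $r_{T_{\mu_1}} - l_{T_{\mu_1}} = \mu_1$ is immediate.

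For the inductive step, set $\vmu' = (\mu_0, \ldots, \mu_{m-1})$ and $\omega = \mu_m - \mu_{m-1}$, so that $T_\vmu = T_{\vmu'} \otimes T_\omega$. Let $D$ be the filling diamond, with corners $C$ (the marked vertex of the product), $A_v$ (the vertex of $T_{\vmu'}$), $B_v$ (the vertex of $T_\omega$), and $\mathrm{Top}$ (the common apex of both sub-triangles); denote the total weights along its four sides by $U_L, U_R, L_L, L_R$. By construction, the upper-left side of $D$ coincides with the right side of $T_{\vmu'}$, so $U_L = r_{\vmu'}$, and similarly $U_R = l_{T_\omega}$. Tracing the left and right sides of $T_\vmu$ through the diamond then yields $l_\vmu = L_L + l_{\vmu'}$ and $r_\vmu = L_R + r_{T_\omega}$.

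The heart of the argument is the identity $L_L + U_L = L_R + U_R$, which I will extract from the coherence at $C$ established in Proposition~\ref{prop:triang}. The proof of that proposition shows that any geodesic in the dual diskoid from $C$ to a vertex of the sub-triangle $T_{\vmu'}$ (respectively $T_\omega$) must pass through $A_v$ (respectively $B_v$) and then follow a geodesic inside the sub-triangle. Applied with target $\mathrm{Top}$, which lies in both sub-triangles simultaneously, this exhibits two explicit geodesics from $C$ to $\mathrm{Top}$: one routed via $A_v$ of weight $L_L + r_{\vmu'} = L_L + U_L$, and one via $B_v$ of weight $L_R + l_{T_\omega} = L_R + U_R$. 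Coherence of the pair $(C, \mathrm{Top})$ then forces these weights to agree. I expect this identification to be the main conceptual step; the combinatorial details of the diamond replacement rules have already been absorbed into Proposition~\ref{prop:triang}.

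Putting everything together,
\[
r_\vmu - l_\vmu = (L_R - L_L) + r_{T_\omega} - l_{\vmu'} = (U_L - U_R) + r_{T_\omega} - l_{\vmu'} = (r_{\vmu'} - l_{\vmu'}) + (r_{T_\omega} - l_{T_\omega}) = \mu_{m-1} + \omega = \mu_m,
\]
where the third equality substitutes $U_L = r_{\vmu'}$ and $U_R = l_{T_\omega}$, and the fourth applies the inductive hypothesis to $\vmu'$ together with the base case to $\omega$.
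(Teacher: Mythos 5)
Your proof is correct and follows essentially the same route as the paper: induction on $m$, the decomposition $T_\vmu = T_{\vmu'}\otimes T_{\mu_m-\mu_{m-1}}$, and the key identity that the two sides of the filling diamond carry equal weight (the paper states this as $l_{\mu_m}+r_C=r_{\vmu'}+l_C$, justified exactly as you do by noting both are geodesics between the same pair of corners). The algebraic bookkeeping matches as well.
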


\begin{proof} This is true by construction if $m=1$, i.e. when $T_\vmu$ is an irreducible triangular diagram of length $1$. Otherwise consider the subsequence $\vmu'$ of the first $m-1$ elements of $\vmu$, we have $T_{\vmu'}\otimes T_{\mu_m-\mu_{m-1}}=T_\vmu$ and $T_{\vmu'}$ satisfies the lemma by induction on the length of the sequence. We have the following diagram:

$$\begin{tikzpicture}
\draw (0,0) -- (60:2) node[midway,below right] {${\scriptstyle r_C}$} -- (60:4) node[midway,below right] {${\scriptstyle r_{\mu_m}}$} -- (120:4) -- (120:2) node[midway,below left] {${\scriptstyle l_{\vmu'}}$} -- (0,0) node[midway,below left] {${\scriptstyle l_C}$};
\draw (60:2) -- ($ (60:2) + (120:2) $) node[midway,below left] {${\scriptstyle l_{\mu_m}}$} -- (120:2) node[midway,below right] {${\scriptstyle r_{\vmu'}}$};
\draw ($ (60:2.66) + (120:0.66) $) node {${\scriptstyle T_{\mu_m-\mu_{m-1}}}$};
\draw ($ (60:0.66) + (120:2.66) $) node {${\scriptstyle T_{\vmu'}}$};
\draw ($ (60:1) + (120:1) $) node {${\scriptstyle C}$};
\end{tikzpicture}$$

Thus we have $\mu_{m-1}=r_{\vmu'}-l_{\vmu'}$ where $l_{\vmu'}$ and $r_{\vmu'}$ are the weights of the geodesics along the left and right sides of $T_{\vmu'}$. If $C$ is the diamond that appears in $T_{\vmu'}\otimes T_{\mu_m-\mu_{m-1}}$ then let $l_C$ and $r_C$ be the weights of the geodesics along the bottom left and bottom right sides of $C$ respectively. We have $l_{\vmu}=l_{\vmu'}+l_C$, $r_{\vmu}=r_{\mu_m}+r_C$ and also $\mu_m-\mu_{m-1}=r_{\mu_m}-l_{\mu_m}$. Thus 
\begin{align*}
r_\vmu-l_\vmu& =r_{\mu_m}+r_C-l_{\vmu'}-l_C\\
& =\mu_m-\mu_{m-1}+l_{\mu_m}+r_C-l_{\vmu'}-l_C \\
& =\mu_m-(r_{\vmu'}-l_{\vmu'})+l_{\mu_m}+r_C-l_{\vmu'}-l_C \\
& =\mu_m-r_{\vmu'}+l_{\mu_m}+r_C-l_C \\
& =\mu_m.
\end{align*}
since $l_{\mu_m}+r_C=r_{\vmu'}+l_C$ as they are weights of the geodesics along the left and right side of $C$.
\end{proof}

The next theorems show that for $\vmu\in P_\vlambda$, the triangular diagram $T_\vmu$ is a coherent web with associated path $\vmu$. To check that the associated path in $P_\vlambda$ is $\vmu$ it is sufficient to check that the weight of the geodesics along the left and right sides of the diagram are both $0$.

\begin{theorem} Let $\vmu$ be a dominant minuscule path, then the geodesic along the left side of $T_\vmu$ is of weight $0$.
\label{thm:dompathzero}\end{theorem}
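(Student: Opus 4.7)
The approach is to induct on $m$, the length of the path $\vmu$.

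Base case $m \leq 1$: For $\vmu = (0, \mu_1)$ with $\mu_1$ dominant minuscule, write $\mu_1 = (c_1, \ldots, c_n)$ with $c_i \in \{0, 1\}$. Dominance forces $c_1 \geq c_2 \geq \cdots \geq c_n$, so no $c_j - c_{j+1}$ equals $-1$ and the sequence $l_1 < l_2 < \cdots$ from Theorem~\ref{thm:lengthone} is empty. Hence $T_{\mu_1}$ has no edges on its left side and $l_\vmu = 0$.

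Inductive step: Set $\vmu' = (\mu_0, \ldots, \mu_{m-1})$, which is still dominant of length $m-1$. By the inductive hypothesis $l_{\vmu'} = 0$, so Lemma~\ref{lem:sum} gives $r_{\vmu'} = \mu_{m-1}$. Writing $T_\vmu = T_{\vmu'} \otimes T_\omega$ with $\omega = \mu_m - \mu_{m-1}$, and following the notation of Lemma~\ref{lem:sum}, we have $l_\vmu = l_{\vmu'} + l_C = l_C$, where $C$ is the diamond joining the two factors. So it suffices to prove $l_C = 0$.

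The key observation is that dominance of $\mu_m = \mu_{m-1} + \omega$ constrains $\omega$: if $\omega = \sum_j (c_j - c_{j+1})\omega_j$, each position $l_i$ (where $c_j - c_{j+1} = -1$) must be a point of strict decrease of $\mu_{m-1}$, so the coefficient of $\omega_{l_i}$ in $\mu_{m-1} = \sum d_j \omega_j$ satisfies $d_{l_i} \geq 1$. Since the $\omega_j$ form a basis of the weight lattice, the multiset of labels on the right side of $T_{\vmu'}$ (whose total weight is $r_{\vmu'} = \mu_{m-1}$) contains exactly $d_j$ copies of the label $j$, so in particular at least one copy of each $l_i$. The diamond replacement rule propagates labels diagonally when $a \neq b$ and cancels them to $0$ when $a = b$; one then verifies that each $l_i$-label entering $C$ from $T_\omega$'s left side (forming the upper-right boundary of $C$) meets and cancels with a matching $l_i$-label propagating from $T_{\vmu'}$'s right side (forming the upper-left boundary of $C$), so that the lower-left boundary of $C$ carries only $0$-labels, giving $l_C = 0$.

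The main obstacle is justifying this collision-and-cancellation argument cell by cell in the $l \times l'$ tiling of $C$ by small diamonds. A fully rigorous treatment likely requires strengthening the inductive hypothesis to track not just the total weight of the right side of $T_{\vmu'}$ but the ordered sequence of its labels, so that one can argue that each label $l_i$ on the upper-right of $C$ actually encounters its matching partner before it can escape onto the lower-left boundary. Once this bookkeeping is set up, the cancellations follow directly from the replacement rule and the dominance constraints linking $\mu_{m-1}$, $\omega$, and $\mu_m$.
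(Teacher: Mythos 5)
Your proof follows the same route as the paper's: induction on the length of the path, Lemma~\ref{lem:sum} to obtain $r_{\vmu'}=\mu_{m-1}$, reduction of the inductive step to $l_C=0$, and the observation that dominance of $\mu_m=\mu_{m-1}+\omega$ forces each left label $l_i$ of $T_\omega$ to occur among the right labels of $T_{\vmu'}$. Your inequality $d_{l_i}\ge 1$ is exactly the paper's assertion that $r_{\vmu'}-l_{\mu_m}$ is dominant, which it deduces from dominance of $\mu_m=r_{\vmu'}+r_{\mu_m}-l_{\mu_m}$ together with the disjointness of the $l_i$'s and $r_j$'s built into Theorem~\ref{thm:lengthone}.

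The step you flag as an obstacle --- that the diamond $C$ actually cancels each left label of $T_\omega$ against a matching right label of $T_{\vmu'}$, leaving only zeros on its bottom-left side --- is precisely what the paper's proof dispatches with the words ``by construction'': the two lower sides of $C$ carry the two multiset differences of the label multisets on the two upper sides, so that $r_{\vmu'}-l_{\mu_m}=r_C-l_C$ with $r_C$ and $l_C$ supported on disjoint sets of fundamental weights (this is also the content of the first lemma of Section~\ref{sec:sl4}). So the logical shape of your argument and the paper's is identical; the paper takes this combinatorial fact for granted where you, more candidly, flag it. It is worth recording that closing the gap does not require the strengthened inductive hypothesis you propose; no ordering data is needed. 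In the grid of small diamonds, a strand entering from the upper-left keeps its label until it crosses a strand bound for the upper-right that currently carries the same label, at which point both become $0$ and remain $0$; strands with distinct labels simply pass through one another. If a strand labelled $j$ from the upper-left and a strand labelled $j$ from the upper-right both reached the lower boundary still carrying $j$, each would have carried $j$ at every one of its crossings, in particular at the unique cell where the two meet --- but there the replacement rule forces them to annihilate. Hence for each label the number of cancelling pairs is the minimum of its multiplicities on the two upper sides, which is the multiset-difference statement, independent of any ordering. Combined with $d_{l_i}\ge 1$ and the distinctness of the $l_i$, this yields $l_C=0$ and completes your induction.
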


\begin{proof} As usual, we proceed by induction on the length of the path. For a path of length 1, the first weight of the path is a dominant minuscule weight, i.e. it is $\omega_j$ for some $j$. The irreducible diagram for this weight has only one edge that exits on the right side of the triangle, proving the base case.

Suppose that we have some dominant minuscule path $\mu_i$ of length $m$, we will proceed using the conventions of Lemma \ref{lem:sum}. By induction $l_{\vmu'}=0$ so by Lemma \ref{lem:sum} we have $r_{\vmu'}=\mu_{m-1}$. Since $l_{\vmu'}=0$, what we must prove is $l_C=0$. Now, $r_{\vmu'}-l_{\mu_m}$ can be written as a difference of dominant weight which share no common fundamental weights, in fact by construction this decomposition is $r_C-l_C$. Thus if $r_{\vmu'}-l_{\mu_m}$ is dominant, $l_C=0$. Suppose that $r_{\vmu'}-l_{\mu_m}$ was not dominant, since $mu_m=\mu_{m-1}+(\mu_m-\mu_{m-1})=r_{\vmu'}+r_{\mu_m}-l_{\mu_m}$ is dominant, the set of fundamental weights that sum to $r_{\mu_m}$ would have to intersect those that sum to $l_{\mu_m}$, which is impossible by the construction of $T_{\mu_m-\mu_{m-1}}$.
\end{proof}

\begin{corollary} Any triangular diagram $T_\vmu$ with $\vmu\in P_\vlambda$ has no edges along the left or right side of the triangle.
\end{corollary}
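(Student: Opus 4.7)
The plan is to combine Lemma~\ref{lem:sum} with Theorem~\ref{thm:dompathzero} and then translate the resulting weight identities into a statement about edges. Since $\vmu\in P_\vlambda$, by definition the path is dominant and satisfies $\mu_m=0$. First I would apply Theorem~\ref{thm:dompathzero} directly to obtain $l_\vmu=0$ for the left side of $T_\vmu$.

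Next I would feed this into Lemma~\ref{lem:sum}, which says $\mu_m = r_\vmu - l_\vmu$. With $\mu_m=0$ and $l_\vmu=0$ this forces $r_\vmu=0$ as well. So both the left-side and right-side geodesics of $T_\vmu$ have total weight zero.

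The final step is to read off what ``total weight zero along a side'' means in terms of edges. The left and right sides of $T_\vmu$ are geodesics in the dual diskoid (by iterated application of Proposition~\ref{prop:triang}), and their total weights are just the sums of the labels of the edges they traverse. Each such label is either a formal $0$ (in which case it does not correspond to an actual edge of the underlying web, per the convention introduced before Theorem~\ref{thm:lengthone}) or a genuine fundamental weight $\omega_i$ with $1\leq i\leq n-1$. Since every $\omega_i$ in this range lies in the open cone spanned by the fundamental weights, any nontrivial sum of them is nonzero. Hence $l_\vmu=r_\vmu=0$ is possible only if no honest-weight edges occur on either side.

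I do not expect a real obstacle here; the work is done by Theorem~\ref{thm:dompathzero}, and the corollary is essentially a bookkeeping consequence. The one point worth being careful about is precisely the distinction between ``formal $0$-labelled edges'' (which are allowed in the diamond construction but deleted in the web) and actual web edges, so that the vanishing of a sum of fundamental weights really does translate to the absence of edges in $T_\vmu$ viewed as a web.
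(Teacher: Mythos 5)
Your argument is exactly the paper's: Theorem~\ref{thm:dompathzero} gives $l_\vmu=0$, and since $\mu_m=0$, Lemma~\ref{lem:sum} forces $r_\vmu=0$, whence no edges can meet either side. The extra care you take distinguishing formal $0$-labelled edges from genuine web edges is a sensible bookkeeping remark but does not change the substance.
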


\begin{proof} By the previous theorem, the left side has weight $0$ and $\vmu$ ends at $0$, so by Lemma~\ref{lem:sum}, the right side has weight $0$.
\end{proof}

\begin{theorem} For $\vmu\in P_\vlambda$, $T_\vmu$ is a coherent web.
\end{theorem}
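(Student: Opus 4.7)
The plan is to prove by induction on the length $m$ of $\vmu$ the slightly stronger statement that $T_\vmu$ is a coherent web for every dominant minuscule path $\vmu$ (not merely those in $P_\vlambda$); the theorem then follows by specializing to $\vmu\in P_\vlambda$. Condition~(1) of coherence (coherent geodesics at the marked vertex) is already in hand: it is Theorem~\ref{thm:lengthone} in the base case and Proposition~\ref{prop:triang} in the inductive step. So the real work lies in conditions~(2) and~(3).

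For the base case $m=1$, $T_\vmu=T_\omega$ is one of the irreducible length-one diagrams of Theorem~\ref{thm:lengthone}, whose dual diskoid appears in Figure~\ref{fig:lengthonedual}. Every dual vertex lies on either the left geodesic $AX$ or the right geodesic $AY$, which immediately gives condition~(2). For condition~(3), the boundary edges are handled because $AX$ and $AY$ are geodesics; for each interior horizontal edge labelled $f(l_i)$ or $f(r_i)$ one checks the identity $\omega_a-\omega_b\in W\omega_{a-b}$ (for $0<b<a<n$), which follows by writing the fundamental weights as $(0,1)$-sequences and noting that they differ in exactly $a-b$ entries.

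For the inductive step, write $T_\vmu=T_{\vmu'}\otimes T_{\omega_m}$ with $\omega_m=\mu_m-\mu_{m-1}$, where both factors are coherent by the inductive hypothesis. The central structural input, extracted from the proof of Proposition~\ref{prop:triang}, is that every geodesic from the new marked vertex $C$ to a vertex $v$ lying in the upper-left subdiagram $T_{\vmu'}$ factors through the vertex $A$ of $T_{\vmu'}$ as $CA$ followed by a geodesic of $T_{\vmu'}$ from $A$ to $v$, so that $\d(C,v)=\d(C,A)+\d_{T_{\vmu'}}(A,v)$; symmetrically for $T_{\omega_m}$. Condition~(3) for edges with both endpoints inside $T_{\vmu'}$ or inside $T_{\omega_m}$ then follows immediately from induction, while for edges inside the diamond region it is read off directly from the local product rule defining the diamond together with the fact that $CA$ and $CB$ are geodesics. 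For condition~(2), an interior vertex $v\in T_{\vmu'}$ lies by induction on a geodesic of $T_{\vmu'}$ from $A$ to some boundary vertex $b$: if $b$ lies on the top of $T_{\vmu'}$ (a part of the top of $T_\vmu$) one prepends $CA$ and is done; if $b$ lies on the side shared with the diamond one extends the geodesic rightwards across the diamond via weight-preserving right diamond moves, and, if necessary, continues through $T_{\omega_m}$ to the top boundary of $T_\vmu$. Vertices inside the diamond itself are handled analogously by constructing horizontal extensions of $CA$ (or $CB$) across to the opposite side.

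The main obstacle is precisely the condition~(2) bookkeeping in the inductive step: extending geodesics that terminate on the shared boundary between $T_{\vmu'}$ and the diamond into geodesics reaching the top edge of $T_\vmu$. This is handled by combining the weight-preserving left/right diamond moves with Lemma~\ref{lem:geod} to splice sub-geodesics correctly, and planarity, which already made the argument in Proposition~\ref{prop:triang} work, ensures that such extensions exist.
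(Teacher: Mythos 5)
Your overall strategy---induction on the product decomposition $T_\vmu=T_{\vmu'}\otimes T_{\mu_m-\mu_{m-1}}$, importing condition (1) from Theorem~\ref{thm:lengthone} and Proposition~\ref{prop:triang}, verifying condition (2) by prepending $CA$ to geodesics of the factors and sweeping geodesics across the diamond with weight-preserving diamond moves, and verifying condition (3) by reducing edges inside a factor to that factor via the distance factorization $\d(C,v)=\d(C,A)+\d_{T_{\vmu'}}(A,v)$ while handling diamond edges by the explicit local rule---is the same as the paper's. Your explicit treatment of geodesics of $T_{\vmu'}$ that terminate on the side shared with the diamond (which must be extended across the diamond before they witness condition (2) for $T_\vmu$) is correct and in fact more careful than the paper's wording.

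There is, however, a gap in your base case for condition (3). For the interior edges of the dual diskoid of $T_\omega$ (Figure~\ref{fig:lengthonedual}) you invoke only the identity $\omega_a-\omega_b\in W\omega_{a-b}$. But the distance from the marked vertex to the $i$-th vertex on the left is $\sum_{k=1}^{i}\omega_{l_k}$ and to the $j$-th vertex on the right is $\sum_{k=1}^{j}\omega_{r_k}$; for $i\geq 2$ these are not single fundamental weights, so what must be shown is that $\sum_{k=1}^{j}\omega_{r_k}-\sum_{k=1}^{i}\omega_{l_k}$ (with $|i-j|\leq 1$) lies in $W\omega_{\sum_{k\leq j}r_k-\sum_{k\leq i}l_k}$, the Weyl orbit attached to the label $f(r_j)$ or $f(l_i)$ of that edge. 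Your identity covers only the first interior edge (and, in the inductive step, the diamond edges, where the difference genuinely is $\omega_c-\omega_d$). The general statement requires using that the indices $l_k,r_k$ are pairwise distinct and interleaved ($r_1<l_1<r_2<l_2<\cdots$): writing each $\omega_m$ as the class of $(1^m,0^{n-m})$, the alternation forces every coordinate of the difference of partial sums to be $0$ or $1$, hence the difference is a single Weyl-orbit element of the required minuscule weight. For $n\leq 3$ the sequences $l$ and $r$ have length at most one and your identity suffices, but for general $n$ the base case of condition (3) as you have written it does not go through without this additional interleaving argument.
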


\begin{proof} In fact, this will be true for any triangular diagram $T$ that is a product of the diagrams described in Theorem~\ref{thm:lengthone} or those with the opposite ordering of labels.

Since $T$ has coherent geodesics at its vertex, we must show that the other two criteria for a coherent web are satisfied. First we must show that any internal vertex of $T$ occurs along some geodesic between the vertex of $T$ and a vertex corresponding to a top most face. We proceed by induction. In the case of a single diagram of length one this is true by design, since all vertices of the dual lie on left or right sides of the diagram which are both geodesics. Using the notation of Proposition~\ref{prop:triang}, suppose that both $A$ and $B$ as above. For any vertex $D$ in $A$, then there exists a geodesic $\gamma'$ between $A$ and a vertex along the boundary $XY$ that passes through $D$. Then the path $\gamma=CA\sqcup\gamma'$ is a geodesic from $C$ to the boundary of the dual diskoid passing through $D$. Similarly we can construct a geodesic from $C$ to the boundary of the dual diskoid passing through any vertex of $B$. This leaves the internal vertices of the diamond. Consider the geodesic $CAY$, by right diamond moves it passes to the geodesic $CBY$ and any vertex in $C$ is on one of the intermediate geodesics. Thus any vertex in the dual diskoid of $A\otimes B$ lies on a geodesic between $C$ and a vertex on $XZ$.

Let $A$ be the vertex of $T$. The second condition that we need to check is that $$\d(A,b)-\d(A,a)\in W\d(a,b)$$ for adjacent vertices $a$ and $b$ where $A$ is the marked vertex of the dual diskoid. If the edge joining $a$ and $b$ is a part of a geodesic from $A$, then the statement is true since either $\d(A,b)+\d(b,a)=\d(A,a)$ or $\d(A,b)=\d(A,a)+\d(a,b)$. Otherwise we can break this into two cases: $a$ and $b$ are either both contained in a triangle $T'$ of length $1$ that appears in the construction of $T$ or not. In the first case, we can pick geodesics $\gamma_a$ and $\gamma_b$ from $A$ to $a$ and $A$ to $b$ that pass through the vertex $B$ of $T'$, then it follows that $\d(A,b)-\d(A,a)=\d(B,b)-\d(B,a)$. Since $a$ and $b$ are not joined by an edge contained in a geodesic from $A$ (or $B$), then following the construction of $T$ in figure~\ref{fig:lengthonedual}, we have $a$ appearing as the $i$-th vertex on the left and $b$ as the $j$-th vertex on the right with $|i-j|\leq 1$. Thus $\d(B,a)=\sum_{k=1}^i \omega_{l_k}$ and $\d(B,b)=\sum_{k=1}^j\omega_{r_k}$. Since the labels on the left and right are all unique, $$\d(B,b)-\d(B,a)=\sum_{k=1}^j\omega_{r_k}-\sum_{k=1}^i \omega_{l_k}\in W\omega_{\sum_{k=1}^j r_k-\sum_{k=1}^i l_k}.$$ But $\sum_{k=1}^j r_k-\sum_{k=1}^i l_k$ is the label of the edge between $a$ and $b$ by construction.

Otherwise, the edge appears in the dual diskoid of some diamond used in the construction of $T$. In any diamond, the only edges that are not part of a geodesic from $A$ are the horizontal edges. These appear in between the small diamonds outlined by the left and right diamond moves:
$$\begin{tikzpicture}[ar/.style={postaction={decorate,decoration={markings,mark=at position .5 with {\arrow{>}}}}}]
\draw[ar] (0,0) node[below] {$e$} -- (60:1) node[right] {$a$} node[midway, below right] {$d$};
\draw[ar] (0,0) -- (120:1) node[left] {$b$} node[midway, below left] {$c$};
\draw[ar] (60:1) -- (120:1) node[midway,above] {${\scriptscriptstyle c-d}$};
\draw[ar] (60:1) -- ($ (60:1) + (120:1) $);
\draw[ar] (120:1) -- ($ (60:1) + (120:1) $);
\end{tikzpicture}$$
There is a geodesic $\gamma$ from $A$ to $a$ passing though $e$ and $\gamma'$ from $A$ to $b$ passing through $e$. Thus $$d(A,b)-d(A,a)=(d(A,e)+\omega_c)-(d(A,e)+\omega_d)=\omega_c-\omega_d\in W\omega_{c-d}=W\d(a,b).$$ 
\end{proof}

Until this result the internal structure of the diagrams of length one was not important. The need for the resulting web to be coherent forces us to pick irreducible triangular diagrams of length one that satisfy the coherence conditions for webs. Both the diagram introduced in Theorem~\ref{thm:lengthone} and the similar diagram with the ordering of the labels switched satisfy these properties by construction. This completes the proof of Theorem~\ref{thm:main} since the webs coming from the triangular diagrams $T_\vmu$ form a set of basis webs by Corollary~\ref{cor:basis}.

\section{Webs as a basis for $\Hom_{\SL}(V_\nu,V_\vlambda)$}

In the previous section, given $\vmu\in P_\vlambda$ with length $k$, we constructed a corresponding web: $$\bigotimes_{i=0}^k T_{\mu_i-\mu_{i-1}}.$$ Now we consider the partial products $$\bigotimes_{i=0}^j T_{\mu_i-\mu_{i-1}}$$ for $j\leq k$. The underlying web is still coherent and thus results in a class in top homology. Since the irreducible representation $V_{\mu_j}$ of highest weight $\mu_j$ occurs in $V_{\lambda_1}\otimes\cdots\otimes V_{\lambda_j}$ with multiplicity vector space $$\Hom_{\SL}(V_{\mu_j},V_{\lambda_1}\otimes\cdots\otimes V_{\lambda_j})\cong H_\top(\overline{\Gr_{\mu_j}}\times_{\Gr}\overline{\Gr_{\lambda_1,\cdots,\lambda_j}})$$ it is reasonable to ask if the classes coming from these partial products form a basis for this space.

Denote the set of dominant minuscule paths of type $\vlambda$ ending at $\nu$ by $P_\vlambda(\nu)$. Using this notation $P_\vlambda=P_\vlambda(0)$. For $\vmu\in P_\vlambda(\nu)$, by theorem~\ref{thm:dompathzero}, the triangular diagram $T_\vmu$ has no incident edges on the left. Thus geodesic on the right has weight $\nu$ and results $\vnu$ the sequence of weights corresponding the the labels of the edges ending on the right side of the triangle from bottom to top. By the previous section, the web underlying $T_\vmu$ is confluent. Let $\tilde{\vmu}\in P_{\vlambda\sqcup\vnu^*}(0)$ be the associated path. Let $\phi:V_{\nu}\hookrightarrow V_{\vnu}$ be the canonical inclusion, then $\Psi_{\nu}=(id_{V_{\vlambda}}\otimes\phi^*)\circ \Psi$ is a map from webs of type $\vlambda\sqcup\vnu^*$ to $\Hom_{\SL}(\C,V_{\vlambda}\otimes V_{\nu^*})$. The goal of this section is to prove the following result:

\begin{theorem} Let $\vlambda$ be a sequence of dominant minuscule weights. Relative to the usual ordering on $P_\vlambda(\vnu)$, there is an upper unitriangular change of basis between $\Psi_{\nu}(T_\vmu)$ for $\vmu\in P_\vlambda(\vnu)$ and the Satake basis of $\Hom_{\SL}(\C,V_\vlambda\otimes V_{\nu^*})$. In particular, the set of webs arising from the triangular diagrams $T_\vmu$ form a basis.
\label{th:subrep}\end{theorem}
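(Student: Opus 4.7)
The plan is to bootstrap from Theorem~\ref{thm:main} and Theorem~\ref{thm:expansion} by viewing $T_\vmu$ as a web with the enlarged boundary $\vlambda\sqcup\vnu^*$. The key structural results about $T_\vmu$---coherence, the formula $r_\vmu=\nu$ coming from Lemma~\ref{lem:sum} and Theorem~\ref{thm:dompathzero}, and the existence of an associated dominant minuscule Littelmann path---were all proved for arbitrary dominant endpoints, so they apply verbatim to $\vmu\in P_\vlambda(\nu)$. This lets me compute $\Psi(T_\vmu)\in\Hom_{\SL}(\C,V_\vlambda\otimes V_\vnu^*)$ in a known Satake basis and then project down via $(\mathrm{id}\otimes\phi^*)$ to land in $\Hom_{\SL}(\C,V_\vlambda\otimes V_\nu^*)$.

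The first concrete step is to identify the associated path $\tilde\vmu\in P_{\vlambda\sqcup\vnu^*}(0)$. Because $\sum_i\nu_i=r_\vmu=\nu$, the irreducible $V_\nu$ sits in $V_\vnu$ as the Cartan component with multiplicity exactly one, so by the Littelmann path description of tensor product multiplicities there is a \emph{unique} dominant minuscule path from $\nu$ down to $0$ along $\vnu^*$; call it $\vmu^\downarrow$. Reading the external vertices of the dual diskoid of $T_\vmu$ clockwise from the marked point, the first $k=|\vlambda|$ reproduce $\vmu$, and the remaining $|\vnu|$ must reproduce $\vmu^\downarrow$ by uniqueness and dominance, so $\tilde\vmu=\vmu\ast\vmu^\downarrow$. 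Theorem~\ref{thm:expansion} now yields
\[
\Psi(T_\vmu)=[\overline{Q(A(\vlambda\sqcup\vnu^*,\tilde\vmu))}]+\sum_{\tilde\vmu'<\tilde\vmu}c_{\tilde\vmu'}\,[\overline{Q(A(\vlambda\sqcup\vnu^*,\tilde\vmu'))}]
\]
in the Satake basis of $\Hom_{\SL}(\C,V_\vlambda\otimes V_\vnu^*)$.

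Next I would describe $(\mathrm{id}\otimes\phi^*)$ on this Satake basis. Geometrically $\phi$ is the Cartan inclusion coming from the semismall convolution $m_\vnu:\overline{\Gr_\vnu}\to\overline{\Gr_\nu}$, and $\phi^*$ kills every isotypic summand of $V_\vnu^*$ other than $V_\nu^*$. Consequently the image of the Satake basis vector indexed by $\tilde\vmu'$ vanishes unless $\tilde\mu'_k=\nu$; when $\tilde\mu'_k=\nu$, the descent half of $\tilde\vmu'$ is forced by uniqueness to equal $\vmu^\downarrow$, the head $\vmu'=(0,\tilde\mu'_1,\ldots,\nu)$ lies in $P_\vlambda(\nu)$, and the image is the Satake basis vector of $\Hom_{\SL}(V_\nu,V_\vlambda)$ indexed by $\vmu'$; the two sides have matching dimension $|P_\vlambda(\nu)|$ by Littelmann. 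Since $\tilde\vmu'<\tilde\vmu$ with $\tilde\mu'_k=\nu$ forces the two descent halves to coincide and thus reduces componentwise comparison to $\vmu'<\vmu$ on the head, applying $(\mathrm{id}\otimes\phi^*)$ to the expansion above gives
\[
\Psi_\nu(T_\vmu)=[\vmu]+\sum_{\vmu'<\vmu,\ \vmu'\in P_\vlambda(\nu)}c_{\vmu'\ast\vmu^\downarrow}\,[\vmu'],
\]
which is upper unitriangular with respect to the partial ordering on $P_\vlambda(\nu)$.

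The main technical obstacle is making rigorous the rule describing how $(\mathrm{id}\otimes\phi^*)$ acts on Satake bases. One must verify that under the identifications $\Hom_{\SL}(\C,V_\vlambda\otimes V_\vnu^*)\cong H_\top(F(\vlambda\sqcup\vnu^*))$ and $\Hom_{\SL}(V_\nu,V_\vlambda)\cong H_\top(\overline{\Gr_\vlambda}\times_\Gr\overline{\Gr_\nu})$, precomposition with $\phi$ is the map induced by $m_\vnu$ and sends top components according to the rule above. The cleanest route is via the decomposition theorem for the semismall resolution $m_\vnu$: the Cartan summand $\IC_{\overline{\Gr_\nu}}\hookrightarrow m_{\vnu,*}\IC_{\overline{\Gr_\vnu}}$ occurs with multiplicity one, its image in top homology of the Satake fibre is the basis vector labelled by a $\tilde\vmu'$ with $\tilde\mu'_k=\nu$, and pairing against the $\vlambda$-side identifies this with the MV/Satake basis vector labelled by the truncation $\vmu'\in P_\vlambda(\nu)$.
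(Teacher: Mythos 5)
Your proposal is correct and follows essentially the same route as the paper: expand $\Psi(T_\vmu)$ in the Satake basis of $H_\top(F(\vlambda\sqcup\vnu^*))$ via Theorem~\ref{thm:expansion}, then push down by $\mathrm{id}\otimes\phi^*$, which kills the components whose $k$-th vertex lies below $\nu$ and maps the rest bijectively (degree one) onto the truncated components. The ``main technical obstacle'' you flag---that $\mathrm{id}\otimes\phi^*$ is computed by the pushforward $t_*$ along the projection $F(\vlambda\sqcup\vnu^*)\to F(\vlambda\sqcup\nu^*)$---is exactly the content of the paper's proof of Lemma~\ref{lem:keylemma}, carried out there by realizing $\phi$ as the canonical map $\pi_*\D_{\overline{\Gr_{\vlambda\sqcup\vnu^*}}}[-d]\to\IC_{\overline{\Gr_{\vlambda\sqcup\nu^*}}}$ and checking a commuting triangle against the shifted dualizing sheaf, which is the concrete form of the semismall/decomposition-theorem argument you sketch.
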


This result follows directly from the next lemma:

\begin{lemma} $$\Psi_{\nu}(T_\vmu)=[\overline{Q(A(\vlambda\sqcup\nu^*,\vmu))}]+\sum_{\stackrel{\vgamma\in P_\vlambda(\nu)}{\vgamma<\vnu}}c_{\vmu}^{\vgamma}[\overline{Q(A(\vlambda\sqcup\nu^*,\vgamma))}].$$
\label{lem:keylemma}\end{lemma}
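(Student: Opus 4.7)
The plan is to apply Theorem~\ref{thm:expansion} to the coherent web $T_\vmu$, regarded as a web with boundary $\vlambda\sqcup\vnu^*$, and then track how the projection $(\mathrm{id}\otimes\phi^*)$ transports each Satake class from $\Hom_{\SL}(\C,V_\vlambda\otimes V_\vnu^*)$ to $\Hom_{\SL}(\C,V_\vlambda\otimes V_\nu^*)$.

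First, since $T_\vmu$ is coherent by the results of the previous section, Theorem~\ref{thm:expansion} applied with the associated path $\tilde\vmu\in P_{\vlambda\sqcup\vnu^*}(0)$ gives
$$\Psi(T_\vmu)=\bigl[\overline{Q(A(\vlambda\sqcup\vnu^*,\tilde\vmu))}\bigr]+\sum_{\tilde\vgamma<\tilde\vmu}c^{\tilde\vgamma}\bigl[\overline{Q(A(\vlambda\sqcup\vnu^*,\tilde\vgamma))}\bigr].$$
By construction of $T_\vmu$, the path $\tilde\vmu$ is the concatenation of $\vmu\in P_\vlambda(\nu)$ with the unique dominant geodesic $\vdelta_0\in P_{\vnu^*}(0)$ that descends from $\nu$ back to $0$ along the right side of the triangle; the existence and dominance of $\vdelta_0$ follow from Theorem~\ref{thm:dompathzero} applied to the sequence $\vnu$ of right-edge labels.

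Second, I would interpret $(\mathrm{id}\otimes\phi^*)$ geometrically. Under the geometric Satake correspondence, the canonical inclusion $\phi\colon V_\nu\hookrightarrow V_\vnu$ is induced by the semismall convolution morphism $m_\vnu\colon\overline{\Gr_\vnu}\to\overline{\Gr_\nu}$, and the dual projection $\phi^*$ corresponds on Satake fibres to the proper pushforward along the induced map $\bar m\colon F(\vlambda\sqcup\vnu^*)\to F(\vlambda\sqcup\nu^*)$ that convolves the last $|\vnu|$ lattice factors. Third, I would compute $\bar m_*$ on each basis class. Writing $\tilde\vgamma=(\vgamma,\vdelta)$ with $\vgamma$ the first $|\vlambda|$ entries and $\vdelta$ the last $|\vnu|$ entries of $\tilde\vgamma$, the claim is
$$\bar m_*\bigl[\overline{Q(A(\vlambda\sqcup\vnu^*,\tilde\vgamma))}\bigr]=\begin{cases}\bigl[\overline{Q(A(\vlambda\sqcup\nu^*,\vgamma))}\bigr]&\text{if }\vgamma\in P_\vlambda(\nu)\text{ and }\vdelta=\vdelta_0,\\ 0&\text{otherwise.}\end{cases}$$
The argument is a dimension count: the generic fibre of $\bar m$ over the stratum indexed by $\vgamma$ is a Satake fibre for the minuscule sequence $\vnu^*$ at the weight $\gamma_{|\vlambda|}$, and semismallness of $m_{\vnu^*}$ forces this fibre to be a point precisely when $\gamma_{|\vlambda|}=\nu$ and $\vdelta$ is the dominant continuation $\vdelta_0$; any other choice of $\vdelta$ drops the image dimension strictly below $\dim F(\vlambda\sqcup\nu^*)$, so the pushforward of the top class vanishes.

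Finally, I would assemble the pieces. The leading term $\tilde\vmu=(\vmu,\vdelta_0)$ maps to $[\overline{Q(A(\vlambda\sqcup\nu^*,\vmu))}]$. Among the lower-order terms $\tilde\vgamma<\tilde\vmu$, a term $\tilde\vgamma=(\vgamma,\vdelta)$ survives if and only if $\vdelta=\vdelta_0$, in which case $\vgamma\in P_\vlambda(\nu)$ and the inequality $\tilde\vgamma<\tilde\vmu$ forces $\vgamma<\vmu$ componentwise. Reindexing the surviving terms by $\vgamma\in P_\vlambda(\nu)$ yields the claimed expansion. The main obstacle is the dimension calculation in the third step: verifying that $\bar m_*$ annihilates every basis class whose second leg is not the dominant geodesic requires a careful analysis of the convolution between twisted products, adapting the $\SL[3]$ argument from \cite{fkk}, and is where the genuine geometric content of the lemma lies.
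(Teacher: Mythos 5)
Your overall strategy coincides with the paper's: expand $\Psi(T_\vmu)$ via Theorem~\ref{thm:expansion} using the coherence of $T_\vmu$ and its associated path $\tilde{\vmu}\in P_{\vlambda\sqcup\vnu^*}(0)$, realize $(\mathrm{id}\otimes\phi^*)$ as the pushforward $t_*$ along the projection $t:F(\vlambda\sqcup\vnu^*)\rightarrow F(\vlambda\sqcup\nu^*)$, compute $t_*$ on Satake classes by a dimension count, and reindex. The dimension count is handled correctly and matches the paper: a class indexed by $\vgamma$ with $\gamma_k<\nu$ dies because its image $\overline{Q(A(\vlambda\sqcup\gamma_k^*,\vgamma_t))}$ has dimension $\braket{|\vlambda\sqcup\gamma_k^*|,\rho}<d$, while for $\gamma_k=\nu$ the map is degree one onto its image. (Your separate condition that the tail equal the distinguished continuation is automatic: a path in $P_{\vlambda\sqcup\vnu^*}(0)$ sitting at $\nu$ after position $k$ has a forced tail, since each remaining step is bounded below by the lowest weight of its Weyl orbit and the steps must sum to $-\nu$; this is the paper's remark that $\vgamma\in P_\vlambda(\nu)$ extends uniquely to $P_{\vlambda\sqcup\vnu^*}(0)$.)

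The genuine gap is your second step, which you state in one sentence as a known fact but which is where the paper does nearly all of its work. That $\phi^*$ --- an a priori purely representation-theoretic map --- acts on top homology as the topological pushforward $t_*$ does not follow from merely invoking geometric Satake and semismallness. The paper proves it by (i) transporting $\mathrm{id}\otimes\phi$ to a sheaf map $\psi\in\Hom(\pi_*\D_{\overline{\Gr_{\vlambda\sqcup\vnu^*}}}[-d],\IC_{\overline{\Gr_{\vlambda\sqcup\nu^*}}})$ via the convolution product, (ii) observing that $\Hom(\pi_!\D_{\overline{\Gr_{\vlambda\sqcup\vnu^*}}}[-d],\D_{\overline{\Gr_{\vlambda\sqcup\nu^*}}}[-d])$ is one-dimensional by adjunction, so the triangle formed by $\psi$, the adjunction map $\tilde{\pi}$, and the canonical map $\iota$ from $\IC$ to the shifted dualizing sheaf commutes up to a scalar, and (iii) pinning that scalar to $1$ by restricting to the open stratum $\Gr_{\vlambda\sqcup\nu^*}$, where all three maps become the identity on a shifted constant sheaf; only after applying $(m_{\vlambda\sqcup\nu^*})_*$ and $\Hom(\C_{\Gr_0},\cdot)$ does one conclude $\tilde{\phi}=t_*$. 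You have inverted the difficulty: the dimension calculation you flag as ``the genuine geometric content'' is the routine part, and the identification you assert without argument is the step that actually needs proof.
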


To prove this result requires the use of the geometric Satake correspondence of Lusztig \cite{Lusztig:qanalog}, Ginzburg \cite{Ginzburg:loop}, and Mirkovi\'c-Vilonen \cite{MV:geometric}:

\begin{theorem} The representation category $\rep(G)$ is equivalent as a pivotal category to the category $\perv(\Gr)$ of perverse sheaves on the affine Grassmannian $\Gr$ constructible with respect to the stratification by the orbits $\Gr_\lambda$. 
\label{th:satake} \end{theorem}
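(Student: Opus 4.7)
The plan is to prove this via the Tannakian formalism. I would first equip $\perv(\Gr)$ with a symmetric monoidal structure and a fiber functor, invoke Tannakian reconstruction to obtain a pro-algebraic group $\tilde G$ with $\perv(\Gr)\simeq\rep(\tilde G)$, and then identify $\tilde G$ with $G$ using the Langlands-dual relationship between $\SL$ and $\PGL$.

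For the monoidal structure I would use convolution. Define
\[
\mathcal{F}_1 * \mathcal{F}_2 \;=\; m_*\bigl(\mathcal{F}_1\,\widetilde{\otimes}\,\mathcal{F}_2\bigr),
\]
where the twisted external product $\widetilde{\otimes}$ is built from the diagram
\[
\Gr\times\Gr \xleftarrow{\;p\;} \PGL(\cK)\times\Gr \xrightarrow{\;q\;} \PGL(\cK)\times^{\PGL(\cO)}\Gr \xrightarrow{\;m\;} \Gr
\]
by descent along the $\PGL(\cO)$-torsor $q$. The crucial geometric input is that the restriction of $m$ to each closed convolution $\overline{\Gr_\lambda}\,\widetilde{\times}\,\overline{\Gr_\mu}$ is semismall, which I would prove by a dimension count relating the fibres of $m$ to highest weight vectors in tensor products. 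Semismallness ensures that convolution is $t$-exact for the perverse $t$-structure, so $(\perv(\Gr),*)$ becomes a monoidal category.

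The symmetric constraint is the main obstacle and is obtained by Beilinson-Drinfeld fusion: one considers the global affine Grassmannian $\Gr_X$ over a curve $X$, whose fibre over a pair of distinct points of $X$ is $\Gr\times\Gr$ and whose fibre over the diagonal is $\Gr$. Perverse sheaves on this total space provide a specialisation interpolating between $\mathcal{F}_1*\mathcal{F}_2$ and $\mathcal{F}_2*\mathcal{F}_1$, and the resulting canonical commutativity constraint is genuinely symmetric only after a sign correction on summands of odd-dimensional support; without this modification one would obtain a braiding that fails the Tannakian rigidity needed in the next step. A fiber functor is then supplied by global cohomology $F(\mathcal{F}) = H^*(\Gr,\mathcal{F})$. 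Exactness of $F$ reduces to parity vanishing of $H^*(\Gr,\IC_\lambda)$, which I would establish via an affine paving of $\overline{\Gr_\lambda}$. K\"unneth together with proper base change on the convolution diagram promotes $F$ to a symmetric tensor functor, and Verdier duality on $\Gr$ supplies the rigid pivotal structure. Tannakian reconstruction then produces an affine group scheme $\tilde G = \mathrm{Aut}^\otimes(F)$ and an equivalence $\perv(\Gr)\simeq\rep(\tilde G)$ of pivotal tensor categories.

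The last step is to identify $\tilde G$ with $G$. Hyperbolic restriction along a generic cocharacter of the maximal torus of $\PGL$ decomposes $F$ into weight spaces indexed by the cocharacter lattice of $\PGL$, which by Langlands duality is the character lattice of $G$. The Mirkovi\'c-Vilonen cycles provide explicit bases of these weight spaces, and an inspection of the minuscule $\IC_{\omega_i}$ together with the convolution relations they generate matches the root datum of $\tilde G$ with the root datum of $G$. Since $\tilde G$ is thereby shown to be connected reductive with the correct root datum, $\tilde G \cong G$ and the equivalence of pivotal categories follows.
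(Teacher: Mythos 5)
The paper does not prove this theorem: it is quoted as the geometric Satake correspondence of Lusztig, Ginzburg, and Mirkovi\'c--Vilonen, with the proof deferred entirely to those references. Your outline is a faithful summary of the standard Mirkovi\'c--Vilonen argument used there---convolution made $t$-exact by semismallness of the convolution morphism, the commutativity constraint from Beilinson--Drinfeld fusion together with the sign correction on odd components, the hypercohomology fiber functor made exact by parity vanishing, and identification of the Tannakian group via the weight functors and MV cycles---so it takes essentially the same approach as the proof the paper relies on.
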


Using this theorem we can transport the problem to the category $\perv(\Gr)$. Recall that the orbits $\Gr_\lambda$ are smooth and simply connected of complex dimension $\braket{\lambda,\rho}$ where $\rho$ is half the sum of the positive roots of $\SL$. For any stratified algebraic variety $M$ there is a perverse sheaf $\IC_M$, the intersection cohomology sheaf which restricts to the shifted constant sheaf $\C[\dim_\C M]$ on the open stratum. In the case that $M$ is smooth, $\IC_M=\C_M[\dim_\C M]=\D_M[-\dim_C M]$ where $\C_M$ is the locally constant sheaf on $M$ and $\D_M$ is the dualizing sheaf on $M$. Then for each orbit $\Gr_\lambda$, there is one simple object in $\perv(\Gr)$: $\IC_{\overline{\Gr_\lambda}}$ which we extend by zero to all of $\Gr$. Under the geometric Satake correspondence, $V_\lambda$ is sent to $\IC_{\overline{\Gr_\lambda}}$ and if $\vlambda$ is a sequence of dominant weights, $V_\vlambda$ corresponds to $(m_{\vlambda})_*\IC_{\overline{\Gr_{\vlambda}}}$ where $m_{\vlambda}:\overline{\Gr_{\vlambda}}\rightarrow\Gr$ is the multiplication morphism. By general principles we then have \begin{multline*}\Hom_{\SL}(\C,V_\vlambda)\cong\Hom_{\perv(\Gr)}(\C_{\Gr_0},(m_{\vlambda})_*\IC_{\overline{\Gr_{\vlambda}}})\stackrel{\iota}{\cong} \\ \Hom_{\perv(\Gr)}(\C_{\Gr_0},(m_{\vlambda})_*\D_{\overline{\Gr_{\vlambda}}}[-\dim_\C \overline{\Gr_{\vlambda}}])\cong \\ H_{\top}(\overline{\Gr_{\vlambda}})=H_{\top}(F(\vlambda)).\end{multline*} Here $\iota$ is canonical map between $\IC_{\overline{\Gr_{\vlambda}}}$ and $*\D_{\overline{\Gr_{\vlambda}}}[-\dim_\C \overline{\Gr_{\vlambda}}]$.


\begin{proof}[Proof of Lemma~\ref{lem:keylemma}]
Under the geometric Satake correspondence, $\tilde{\phi}$ corresponds to a map $$\tilde{\phi}:\Hom(\C_{\Gr_0},(m_{\vlambda\sqcup\vnu^*})_*\D_{\overline{\Gr_{\vlambda\sqcup\vnu^*}}}[-d])\rightarrow \Hom(\C_{\Gr_0},(m_{\vlambda\sqcup\nu^*})_*\IC_{\overline{\Gr_{\vlambda\sqcup\nu^*}}}).$$ Here $$d=\braket{|\vlambda\sqcup\vnu^*|,\rho}=\braket{\lambda_1+\cdots+\lambda_k+\nu_1^*+\cdots+\nu_l^*,\rho}=\dim_\C \overline{\Gr_{\vlambda\sqcup\vnu^*}}.$$ On the other hand, we consider the map $\pi:\overline{\Gr_{\vlambda\sqcup\vnu*}}\rightarrow\overline{\Gr_{\vlambda\sqcup\nu^*}}$, the projection onto the first $k$ and the last factors. Then $\pi$ restricts to the map $$t:F(\vlambda\sqcup\vnu^*)\rightarrow F(\vlambda\sqcup\nu^*),$$ which projects onto the first $k$ factors. This in turn defines a pushforward in homology $$t_*:H_{2d}(F(\vlambda\sqcup\vnu^*))\rightarrow H_{2d}(F(\vlambda\sqcup\nu^*)).$$ $F(\vlambda\sqcup\vnu^*)$ has pure dimension $d$ and its components are $\overline{Q(A(\vlambda\sqcup\vnu^*,\vmu))}$ for $\vmu\in P_{\vlambda\sqcup\vnu^*}(0).$ The variety $F(\vlambda\sqcup\nu^*)$ is no longer pure dimensional, but its irreducible components of dimension $d$ are $\overline{Q(A(\vlambda\sqcup\nu^*,\vmu))}$ where $\vmu\in P_\vlambda(\nu).$ If $\vmu\in P_{\vlambda\sqcup\vnu^*}(0)$ then let $\vmu_t$ be its truncation at position $k$. Then $$t(Q(A(\vlambda\sqcup\vnu^*,\vmu)))=Q(A(\vlambda\sqcup\mu_k^*,\vmu_t)).$$ It follows that $t_*$ sends $[\overline{Q(A(\vlambda\sqcup\vnu^*,\vmu))}]$ to $0$ when $\mu_k<\nu$ since the dimension of $\overline{Q(A(\vlambda\sqcup\mu_k^*,\vmu_t))}$ is $\braket{|\vlambda\sqcup\mu_k^*|,\rho}<d$. Otherwise $t$ is an isomorphism from $Q(A(\vlambda\sqcup\vnu^*,\vmu))$ to its image $Q(A(\vlambda\sqcup\nu^*,\vmu_t))$, so $t$ has degree $1$ here and it sends $[\overline{Q(A(\vlambda\sqcup\vnu^*,\vmu))}]$ to $[\overline{Q(A(\vlambda\sqcup\nu^*,\vmu_t))}]$.

Thus if $\tilde{\phi}=t_*$ on homology we will be done, since
$$\begin{aligned}
\Psi_\nu(T_\vmu) = & t_*(\Psi(T_\vmu)) \\
= & t_*([\overline{Q(A(\vlambda\sqcup\vnu^*,\tilde{\vmu}))}])+\sum_{\stackrel{\vgamma\in P_{\vlambda\sqcup\vnu^*}(0)}{\vgamma<\tilde{\vmu}}}c_{\tilde{\vmu}}^{\vgamma}t_*([\overline{Q(A(\vlambda\sqcup\vnu^*,\vgamma))}]) \\
= & [\overline{Q(A(\vlambda\sqcup\nu^*,\vmu))}])+\sum_{\stackrel{\vgamma\in P_{\vlambda}(\nu)}{\vgamma<\vmu}}c_{\tilde{\vmu}}^{\vgamma}[\overline{Q(A(\vlambda\sqcup\nu^*,\vgamma))}]
\end{aligned}$$
Note that $\vgamma\in P_{\vlambda}(\nu)$ uniquely extends to $\vgamma\in P_{\vlambda\sqcup\vnu^*}(0)$ so the coefficient $c_{\tilde{\vmu}}^{\vgamma}$ in the last line is the same as in the second.

Now recall that $\tilde{\phi}$ is composition with the map $$id_{(m_{\vlambda})_*\D_{\overline{\Gr_{\vlambda}}}[-\braket{|\vlambda|,\rho}]}\otimes\phi\in\Hom((m_{\vlambda\sqcup\vnu^*})_*\D_{\overline{\Gr_{\vlambda\sqcup\vnu^*}}}[-d],(m_{\vlambda\sqcup\nu^*})_*\IC_{\overline{\Gr_{\vlambda\sqcup\nu^*}}}),$$ where $\phi$ is the canonical map in $$\Hom((m_{\vnu^*})_*\D_{\overline{\Gr_{\vnu^*}}}[-\braket{|\vnu^*|,\rho}],\IC_{\overline{\Gr_{\nu^*}}}).$$ Since $m_{\vlambda\sqcup\vnu^*}=m_{\vlambda\sqcup\nu^*}\pi$ we have $(m_{\vlambda\sqcup\vnu^*})_*=(m_{\vlambda\sqcup\nu^*})_*\pi_*$, which gives the following claim:

\begin{lemma} There exists a map $\psi\in\Hom(\pi_*\D_{\overline{\Gr_{\vlambda\sqcup\vnu^*}}}[-d],\IC_{\overline{\Gr_{\vlambda\sqcup\nu^*}}})$ such that $(m_{\vlambda\sqcup\nu^*})_*\psi=id_{(m_{\vlambda})_*\D_{\overline{\Gr_{\vlambda}}}[-\braket{|\vlambda|,\rho}]}\otimes\phi$.
\end{lemma}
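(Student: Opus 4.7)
The plan is to exploit the standard convolution picture on the affine Grassmannian. Since every entry of $\vlambda$ and $\vnu^*$ is minuscule, the twisted products $\overline{\Gr_\vlambda}$, $\overline{\Gr_{\vnu^*}}$, and $\overline{\Gr_{\vlambda\sqcup\vnu^*}}$ are smooth, so their shifted dualizing sheaves coincide with the corresponding IC sheaves. First I would use the standard twisted-external-product description of the IC sheaf of a twisted product: writing $\tilde{\otimes}$ for the twisted external tensor product along the convolution diagram, we have
$$\D_{\overline{\Gr_{\vlambda\sqcup\vnu^*}}}[-d]=\IC_{\overline{\Gr_{\vlambda\sqcup\vnu^*}}}=\IC_{\overline{\Gr_\vlambda}}\,\tilde{\otimes}\,\D_{\overline{\Gr_{\vnu^*}}}[-\braket{|\vnu^*|,\rho}],$$
and $\IC_{\overline{\Gr_{\vlambda\sqcup\nu^*}}}=\IC_{\overline{\Gr_\vlambda}}\,\tilde{\otimes}\,\IC_{\overline{\Gr_{\nu^*}}}$ (this last identity requires no smoothness of $\overline{\Gr_{\nu^*}}$).

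Next, $\pi$ acts as the identity on the first $k$ factors and as the multiplication morphism $m_{\vnu^*}$ on the last $l$ factors, so $\pi_*$ commutes with $\tilde{\otimes}$ in the second factor, giving the canonical isomorphism
$$\pi_*\D_{\overline{\Gr_{\vlambda\sqcup\vnu^*}}}[-d] \;\cong\; \IC_{\overline{\Gr_\vlambda}}\,\tilde{\otimes}\,(m_{\vnu^*})_*\D_{\overline{\Gr_{\vnu^*}}}[-\braket{|\vnu^*|,\rho}].$$
I would then define $\psi := \mathrm{id}_{\IC_{\overline{\Gr_\vlambda}}}\,\tilde{\otimes}\,\phi$, which lands in $\IC_{\overline{\Gr_\vlambda}}\,\tilde{\otimes}\,\IC_{\overline{\Gr_{\nu^*}}}=\IC_{\overline{\Gr_{\vlambda\sqcup\nu^*}}}$, giving a morphism of the required type. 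The identity $(m_{\vlambda\sqcup\nu^*})_*\psi=\mathrm{id}\otimes\phi$ is then a matter of unfolding definitions: pushing a twisted external tensor product forward along $m_{\vlambda\sqcup\nu^*}$ computes the convolution of the individually pushed-forward factors, and under the geometric Satake equivalence of Theorem~\ref{th:satake} the convolution of $\mathrm{id}$ with $\phi$ is exactly $\mathrm{id}\otimes\phi$.

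The hard part will be making the compatibility $\pi_*(\mathcal{F}\,\tilde{\otimes}\,\mathcal{G})\cong\mathcal{F}\,\tilde{\otimes}\,(m_{\vnu^*})_*\mathcal{G}$ rigorous, since the twisted product is only étale-locally a direct product. I would resolve this by working with the standard convolution presentation $\overline{\Gr_\vlambda}\times\PGL(\cK)\times^{\PGL(\cO)}\overline{\Gr_{\vnu^*}}$ and invoking proper base change along the $\PGL(\cO)$-quotient, a standard device in the geometric Satake literature (Mirkovi\'c--Vilonen).
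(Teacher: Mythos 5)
Your construction is essentially the paper's own argument: the paper produces $\psi$ as the image of $\phi$ under the map $\Hom((m_{\vnu^*})_*\D_{\overline{\Gr_{\vnu^*}}}[-\braket{|\vnu^*|,\rho}],\IC_{\overline{\Gr_{\nu^*}}})\to\Hom(\pi_*\D_{\overline{\Gr_{\vlambda\sqcup\vnu^*}}}[-d],\IC_{\overline{\Gr_{\vlambda\sqcup\nu^*}}})$ coming from the definition of the convolution tensor product, which is exactly your $\mathrm{id}\,\tilde{\otimes}\,\phi$ made explicit (including the same characterization by restriction to the open stratum). Your version just spells out the identifications via the twisted external product, smoothness of the minuscule twisted products, and proper base change along the convolution presentation, all of which the paper leaves implicit.
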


This map exists because by the definition of the convolution tensor product we have $$\Hom((m_{\vnu^*})_*\D_{\overline{\Gr_{\vnu^*}}}[-\braket{|\vnu^*|,\rho}],\IC_{\overline{\Gr_{\nu^*}}})\cong\Hom(\pi_*\D_{\overline{\Gr_{\vlambda\sqcup\vnu^*}}}[-d],\IC_{\overline{\Gr_{\vlambda\sqcup\nu^*}}})$$ and the image of $\phi$ under this map is $\psi$. In fact, this map is characterized by the fact that it is the identity when restricted to the open dense stratum of $\vlambda\sqcup\nu^*$. Thus we have the following diagram of complexes of constructible sheaves on $\overline{\Gr_{\vlambda\sqcup\nu^*}}$:
$$\begin{tikzpicture}
\matrix (m) [matrix of math nodes, row sep=3em, column sep=3em, text height=1.5ex, text depth=0.25ex]
{ \pi_!\D_{\overline{\Gr_{\vlambda\sqcup\vnu^*}}}[-d]=\pi_*\D_{\overline{\Gr_{\vlambda\sqcup\vnu^*}}}[-d] & \IC_{\overline{\Gr_{\vlambda\sqcup\nu^*}}} \\
 & \D_{\overline{\Gr_{\vlambda\sqcup\nu^*}}}[-d] \\ };
\path[->] (m-1-1) edge node[auto] {$\psi$} (m-1-2) edge node[auto] {$\tilde{\pi}$} (m-2-2) (m-1-2) edge node[auto] {$\iota$} (m-2-2);
\end{tikzpicture}$$

Once again $\iota$ is the canonical map between the $\IC$ sheaf of a variety and the shifted dualizing sheaf and $\tilde{\pi}$ is the map obtained from applying the adjunction $\pi_!\vdash \pi^!$ to the identity morphism in $\Hom(\D_{\overline{\Gr_{\vlambda\sqcup\vnu^*}}}[-d],\D_{\overline{\Gr_{\vlambda\sqcup\vnu^*}}}[-d])$ after realizing that $\D_{\overline{\Gr_{\vlambda\sqcup\vnu^*}}}[-d]=\pi^!\D_{\overline{\Gr_{\vlambda\sqcup\nu^*}}}[-d]$.

\begin{lemma} The above diagram commutes.
\end{lemma}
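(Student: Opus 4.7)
The plan is to prove commutativity by a dimension count: the ambient $\Hom$ group is one-dimensional, and both compositions visibly restrict to the same nonzero morphism on the dense open stratum.

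First I would compute the ambient space $\Hom(\pi_*\D_{\overline{\Gr_{\vlambda\sqcup\vnu^*}}}[-d], \D_{\overline{\Gr_{\vlambda\sqcup\nu^*}}}[-d])$. Since $\pi$ is proper we have $\pi_! = \pi_*$, and the adjunction $(\pi_!, \pi^!)$ identifies this group with $\Hom(\D_{\overline{\Gr_{\vlambda\sqcup\vnu^*}}}[-d], \pi^!\D_{\overline{\Gr_{\vlambda\sqcup\nu^*}}}[-d])$. Writing $\D_Z = a_Z^!\C$ for the structure map $a_Z:Z\to\mathrm{pt}$, the factorization $a_{\overline{\Gr_{\vlambda\sqcup\vnu^*}}} = a_{\overline{\Gr_{\vlambda\sqcup\nu^*}}} \circ \pi$ gives $\pi^!\D_{\overline{\Gr_{\vlambda\sqcup\nu^*}}} = \D_{\overline{\Gr_{\vlambda\sqcup\vnu^*}}}$. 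Both varieties have complex dimension $d$ (since $|\vnu^*|=\nu^*$), so the shifts match and we obtain $\mathrm{End}(\D_{\overline{\Gr_{\vlambda\sqcup\vnu^*}}})$, which by Verdier duality equals $H^0(\overline{\Gr_{\vlambda\sqcup\vnu^*}},\C) = \C$ (the twisted product is connected, being built by an iterated fibration of connected minuscule orbits over a point).

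Second, I would restrict everything to the open stratum $\Gr_{\vlambda\sqcup\nu^*} \subset \overline{\Gr_{\vlambda\sqcup\nu^*}}$, over which $\pi$ is an isomorphism by the standard property of convolution that the top stratum lifts uniquely. On this stratum, $\IC$ is canonically the shifted constant sheaf, so $\iota$ restricts to the identity; $\psi$ restricts to the identity by the characterizing property established just above its introduction; and $\tilde{\pi}$, which is the counit of $(\pi_!,\pi^!)$ applied to $\D[-d]$, is literally the identity over the isomorphism locus of $\pi$. Consequently both $\iota \circ \psi$ and $\tilde{\pi}$ restrict to the identity over $\Gr_{\vlambda\sqcup\nu^*}$, so they are nonzero elements of the one-dimensional global $\Hom$ space agreeing on a dense open, and must coincide.

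The main obstacle I anticipate is verifying carefully that the counit $\pi_!\pi^! \to \mathrm{id}$ really is the identity over the open stratum under the identification $\pi^!\D_{\overline{\Gr_{\vlambda\sqcup\nu^*}}}[-d] = \D_{\overline{\Gr_{\vlambda\sqcup\vnu^*}}}[-d]$; once the conventions are pinned down this is routine, but the shift bookkeeping and the distinction between $\pi^!$ and $\pi^*$ must be done explicitly. A secondary point is connectedness of $\overline{\Gr_{\vlambda\sqcup\vnu^*}}$, which I would dispatch by induction on the length of the sequence using that each convolution step is a fibration with connected fibres over a connected base.
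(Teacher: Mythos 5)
Your proof is correct and follows essentially the same route as the paper: identify $\Hom(\pi_!\D_{\overline{\Gr_{\vlambda\sqcup\vnu^*}}}[-d],\D_{\overline{\Gr_{\vlambda\sqcup\nu^*}}}[-d])$ as a one-dimensional space via the $(\pi_!,\pi^!)$ adjunction, then pin down the scalar by restricting to the open dense stratum where $\iota$, $\psi$, and $\tilde{\pi}$ all become the identity. The extra details you supply (properness, $\pi^!\D=\D$, connectedness, and $\pi$ being an isomorphism over the top stratum) are exactly the facts the paper leaves implicit.
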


\begin{proof} First we notice that since $$\Hom(\D_{\overline{\Gr_{\vlambda\sqcup\vnu^*}}}[-d],\D_{\overline{\Gr_{\vlambda\sqcup\vnu^*}}}[-d])\cong\Hom(\pi_!\D_{\overline{\Gr_{\vlambda\sqcup\vnu^*}}}[-d],\D_{\overline{\Gr_{\vlambda\sqcup\nu^*}}}[-d])$$ via the adjunction. Thus since $\Hom(\D_{\overline{\Gr_{\vlambda\sqcup\vnu^*}}}[-d],\D_{\overline{\Gr_{\vlambda\sqcup\vnu^*}}}[-d])$ is one dimensional, so is $\Hom(\pi_!\D_{\overline{\Gr_{\vlambda\sqcup\vnu^*}}}[-d],\D_{\overline{\Gr_{\vlambda\sqcup\nu^*}}}[-d])$. Thus the above diagram must commute up to some constant $c$. By restricting to the open dense stratum $\Gr_{\vlambda\sqcup\nu^*}\subset\overline{\Gr_{\vlambda\sqcup\nu^*}}$, we can discover what this constant is. After restricting, each complex becomes the constant sheaf $\C_{\Gr_{\vlambda\sqcup\nu^*}}[d]$ and the maps $\tilde{\pi}$ and $\iota$ both become the identity map. As stated before, the map $\Psi$ is also the identity map. Thus the constant is one and the diagram commutes.
\end{proof}

Thus the diagram still commutes after applying the functors $(m_{\vlambda\sqcup\nu^*})_*$ and $\Hom(\C_{\Gr_0},\cdot)$, i.e. we have the following commutative diagram:
$$\begin{tikzpicture}
\matrix (m) [matrix of math nodes, row sep=3em, column sep=3em, text height=1.5ex, text depth=0.25ex]
{ \Hom(\C_{\Gr_0},(m_{\vlambda\sqcup\vnu^*})_*\D_{\overline{\Gr_{\vlambda\sqcup\vnu^*}}}[-d]) & \Hom(\C_{\Gr_0},(m_{\vlambda\sqcup\nu^*})_*\IC_{\overline{\Gr_{\vlambda\sqcup\nu^*}}}) \\
 & \Hom(\C_{\Gr_0},(m_{\vlambda\sqcup\nu^*})_*\D_{\overline{\Gr_{\vlambda\sqcup\nu^*}}}[-d]) \\ };
\path[->] (m-1-1) edge node[auto] {$\tilde{\phi}$} (m-1-2) edge node[auto] {$\tilde{\pi}$} (m-2-2) (m-1-2) edge node[below,sloped,inner sep=0.5pt] {$ \sim $} node[auto] {$\iota$} (m-2-2);
\end{tikzpicture}$$

The map $\iota$ becomes an isomorphism and we have $\tilde{\phi}=\tilde{\pi}$. We have $$\Hom(\C_{\Gr_0},(m_{\vlambda\sqcup\vnu^*})_*\D_{\overline{\Gr_{\vlambda\sqcup\vnu^*}}}[-d])\cong H_\top(F(\vlambda\sqcup\vnu^*))$$ and $$\Hom(\C_{\Gr_0},(m_{\vlambda\sqcup\nu^*})_*\D_{\overline{\Gr_{\vlambda\sqcup\nu^*}}}[-d])\cong H_\top(F(\vlambda\sqcup\nu^*)),$$ and under these isomorphisms $\tilde{\pi}$ becomes $t_*$. Thus $\tilde{\phi}$ is $t_*$ on homology as needed.
\end{proof}

\section{Agreement with Earlier Work}
\subsection{$\SL[2]$}
In the case of $\SL[2]$, due to Frenkel and Khovanov \cite{FK:canonical}, the set of cup diagrams of length $2n$ is in bijection with Lusztig's dual canonical basis for $\Hom_{\SL[2]}(V_0,V_1^{\otimes 2n})$. The set of cup diagram or crossless planar matchings is the ways of connecting an even number of points on a line via edges with no crossings.

Such a planar matching is associated with a $2$ by $n$ Young Tableaux and hence with a minuscule Littelmann path. If position $i$ is the start of an edge, $i$ is in the top row of the Young Tableaux and the $i$-th weight in the path is $(l_i,0)$ where $l_i$ is the number of arcs that start at before position $i$ and end after.

We must check that from the given minuscule Littelmann path we recreate the same cup diagram. Since $\SL[2]$ has only one minuscule weight $\omega_1=(1,0)$, it follows $W\omega_1=\{(1,0),(0,1)\}$ and thus we have two possible irreducible triangles of length $1$:
$$\begin{tikzpicture}[ar/.style={postaction={decorate,decoration={markings,mark=at position .5 with {\arrow{>}}}}}]
\draw (0,0) -- +(60:2) -- ++(120:2) node[midway,above] {$(1,0)$} -- cycle;
\draw[ar] ($ (60:1) + (120:1) $) -- ($ (60:0.666) + (120:0.666) $) node[midway,left] {$\omega_1$};
\draw[ar] ($ (60:0.666) + (120:0.666) $) -- (60:1);
\draw (4,0) -- +(60:2) -- ++(120:2) node[midway,above] {$(0,1)$} -- cycle;
\draw[ar] ($ (4,0) + (60:0.666) + (120:0.666) $) -- ($ (4,0) + (60:1) + (120:1) $) node[midway,right] {$\omega_1$};
\draw[ar] ($ (4,0) + (120:1) $) -- ($ (4,0) + (60:0.666) + (120:0.666) $);
\end{tikzpicture}$$

Thus the three possible diamonds are

$$\begin{tikzpicture}[ar/.style={postaction={decorate,decoration={markings,mark=at position .5 with {\arrow{>}}}}}]
\draw (0,0) -- ++(60:2) -- ++(120:2) -- ++(240:2) -- cycle;
\draw[ar] ($ (120:2) + (60:1) $) -- ($ (120:1.333) + (60:1.333) $) node[below] {$\omega_1$};
\draw[ar] ($ (120:1.333) + (60:1.333) $) -- ($ (120:1) + (60:2) $);
\draw (4,0) -- ++(60:2) -- ++(120:2) -- ++(240:2) -- cycle;
\draw[ar] ($ (4,0) + (120:2) + (60:1) $) -- ($ (4,0) + (60:1) $) node[midway,below left] {$\omega_1$};
\draw (8,0) -- ++(60:2) -- ++(120:2) -- ++(240:2) -- cycle;
\draw[ar] ($ (8,0) + (120:1) $) -- ($ (8,0) + (60:2) + (120:1) $) node[midway,below right] {$\omega_1$};
\end{tikzpicture}$$

Here we have not shown the edges that are labeled zero by the construction. Since the dual weight to $\omega_1$ is $\omega_1$ the orientations that appear in the construction do not matter, thus we can drop the orientation and labels all together. Since the diagram corresponding to minuscule Littelmann path had no edges incident to the left or right sides of the triangle and there are no crossings in the diamonds, we get a crossless planar matching. This edges in this matching start at the triangles labelled $(1,0)$ and end at those labelled $(0,1)$, so each minuscule Littelmann path gives a unique crossless planar matching. Since the number of minuscule Littelmann paths is $C_n$, which is also the number of crossless planar matchings, the two constructions agree.
\subsection{$\SL[3]$}

For $\SL[3]$, the work of Greg Kuperberg gives a set of webs, i.e the graphs generated by two vertices $$\begin{tikzpicture}[ar/.style={postaction={decorate,decoration={markings,mark=at position .5 with {\arrow{>}}}}}] \draw[ar] (3,0) -- ++(30:1);\draw[ar] (3,0) -- ++(150:1);\draw[ar] (3,0) -- ++(270:1);\draw[ar] (30:1) -- (0,0);\draw[ar] (150:1) -- (0,0);\draw[ar] (270:1) -- (0,0);\end{tikzpicture}$$

The set of basis webs are those graphs with no faces of degree 2 or 4. Since $\omega_1=\omega_2^*$, we have oriented all edges so that they are labelled with the weight $\omega_1.$ Each web has coherent geodesics and a path of minimal weight in the dual diskoid is also one of minimal length. Each graph corresponds to a unique minuscule Littelmann path and given a minuscule Littelmann path, there is an algorithm in \cite{Kuperberg:notdual} which generates the basis web from the given path.

\begin{lemma} Given a minuscule Littelmann path $\mu_i$, $T_{\vmu}$ is a non-elliptic web, i.e it has no faces of degree $4$ or lower.
\end{lemma}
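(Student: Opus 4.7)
\bigskip

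\noindent\textbf{Proof plan.} Work dually: a face of degree $d$ in the web is an interior vertex of degree $d$ in the dual diskoid, and since the underlying web for $\SL[3]$ is bipartite (after orienting all edges by $\omega_1$), every face has even degree. So the statement reduces to showing that every interior vertex $v$ of the dual diskoid of $T_\vmu$ has degree at least $6$. In $\SL[3]$ the set $W\omega_1\cup W\omega_2$ consists of exactly six weights, the six unit vectors of the hexagonal root lattice. By coherence condition (3) applied at $v$, each dual edge at $v$ picks out a neighbor at a position of the form $\d(\bullet,v)\pm w$ with $w\in W\omega_1\cup W\omega_2$, and distinct edges correspond to distinct hexagonal directions. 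Hence every interior vertex automatically has degree at most $6$, and the lemma is equivalent to the statement that all six directions are filled.

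The plan is to induct on the length $m$ of $\vmu$. In the base case $m=1$, the Weyl orbits $W\omega_1$ and $W\omega_2$ have three elements each, so the sequences $(l_i)$ and $(r_i)$ of Theorem~\ref{thm:lengthone} contain at most one entry each. Thus $T_{\mu_1}$ has at most two internal edges and no interior dual vertices, so nothing needs to be checked. In the inductive step, write $T_\vmu=T_{\vmu'}\otimes T_{\mu_m-\mu_{m-1}}$ and note that faces lying entirely inside $T_{\vmu'}$ or $T_{\mu_m-\mu_{m-1}}$ only grow under the product operation, so the inductive hypothesis handles them. The new interior dual vertices are those lying inside the newly attached diamond $C$, or on the three seams where $C$ meets $T_{\vmu'}$, $T_{\mu_m-\mu_{m-1}}$, and the outer boundary $XY$.

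For a purely interior vertex of $C$, three of its hexagonal neighbors come from the local trivalent structure of the H-move that created $v$, and the remaining three come from the neighboring small diamonds in the grid. The key point is that dominance of the path $\vmu$ (which, by Theorem~\ref{thm:dompathzero} and its corollary, ensures that the geodesics on the left and right sides of $T_\vmu$ carry weight zero) prevents two adjacent small diamonds from both resolving via the $a=b$ rule in a way that would leave a hexagonal direction empty: such a configuration would force one of the side geodesics to acquire nonzero weight, contradicting the corollary. For a vertex on a seam, one combines the three neighbors supplied by $C$ with those supplied by the adjacent length-$1$ triangle, using the explicit form of the length-$1$ triangles to verify the missing directions are filled.

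The main obstacle is the seam analysis between $C$ and the subtriangle $T_{\mu_m-\mu_{m-1}}$, because the interaction between the H-move resolution rule and the internal alternating edges $f(r_i), f(l_i)$ of the length-$1$ triangles must be tracked case by case depending on whether the relevant small diamond resolves with $a=b$ or $a\neq b$. Everything else reduces to bookkeeping with the hexagonal-lattice picture, but this step needs a direct local check exploiting the minuscule combinatorics in $\SL[3]$.
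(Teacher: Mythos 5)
Your overall strategy---pass to the dual diskoid, use the hexagonal structure of the $\SL[3]$ weight lattice, and induct on the length of $\vmu$---is coherent in outline, but as written the argument has a genuine gap precisely where the content lies. Everything is made to rest on the claim that at an interior dual vertex of the diamond $C$ all six hexagonal directions are occupied, and the only mechanism you offer is that dominance ``prevents two adjacent small diamonds from both resolving via the $a=b$ rule in a way that would leave a hexagonal direction empty,'' citing Theorem~\ref{thm:dompathzero} and its corollary. Those results say only that the outer left and right sides of the finished triangle carry weight zero; they are global boundary statements and do not control which small diamonds inside an intermediate diamond resolve with $a=b$. In fact $a=b$ resolutions occur freely inside these diamonds (two strands of the same label simply passing one another), and each one introduces weight-$0$ dual edges whose contraction identifies grid vertices, so the clean picture of one dual vertex per grid point with six candidate neighbours does not survive without further work. (The side remark that distinct edges at a dual vertex give distinct hexagonal directions, hence degree at most $6$, is likewise unproved, though it is not needed for the lemma.) On top of this you explicitly defer the seam analysis between $C$ and $T_{\mu_m-\mu_{m-1}}$ as ``the main obstacle''; together with the interior-of-$C$ claim that is essentially the whole proof, so what remains is a plan rather than a proof. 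Note also that a face lying inside $T_{\vmu'}$ but touching its right edge is \emph{not} covered by the inductive hypothesis (it was a boundary face of $T_{\vmu'}$ and only becomes internal after gluing $C$), so it must be folded into the seam cases as well.

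For comparison, the paper's proof is short and entirely face-based, with no induction and no lattice-direction counting: for a given internal face it looks at the topmost length-one triangle or diamond meeting the face, which contributes at least one vertex to it, and at the diamonds containing the leftmost and rightmost points of the face, each of which contributes two more; hence every internal face has at least five vertices and there are no faces of degree $4$ or $2$. If you want to salvage your approach, the missing ingredient is a concrete local analysis of the small-diamond resolutions (including the weight-$0$ contractions) near a given dual vertex; but the direct count of vertices contributed by the extremal pieces of the face is considerably less delicate.
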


\begin{proof} Consider an internal face $f$ and consider the triangle of length one, or diamond closest to the top of $T_\vmu$ that also contains part of the selected face. Since the face is internal there are two possibilities: 
$$\begin{tikzpicture}
\draw (0,0) -- (60:2) -- (120:2) -- cycle;
\draw (60:1) -- ($ (60:0.666) + (120:0.666) $) -- (120:1);
\draw ($ (60:0.666) + (120:0.666) $) -- ($ (60:1) + (120:1) $);
\draw ($ (60:0.333) + (120:0.333) $) node {$F$};
\draw (4,0) -- ++(60:2) -- ++(120:2) -- ++(240:2) -- cycle;
\draw ($ (4,0) + (60:1) $) -- ($ (4,0)+(60:0.666) + (120:0.666) $) -- ($ (4,0)+(120:1) $);
\draw ($ (4,0) + (60:2) + (120:1) $) -- ($ (4,0)+(60:1.333) + (120:1.333) $) -- ($ (4,0)+(120:2)+(60:1) $);
\draw ($ (4,0)+(60:1.333) + (120:1.333) $) -- ($ (4,0)+(60:0.666) + (120:0.666) $);
\draw ($ (4,0)+(60:0.333) + (120:0.333) $) node {$F$};
\end{tikzpicture}$$
Either of these possibilities would add one vertex to the face. Considering a diamond that contains a left or rightmost point, it follows that the diamonds would be as above and thus each add 2 more vertices to the face. Thus an internal face must have at least $5$ vertices. Then $T_\vmu$ is an $\SL[3]$ web with faces no faces of degree $4$ or $2$ and thus it is non-elliptic.
\end{proof}

Thus given a minuscule sequence $\vlambda$, the set of webs $\{T_\vmu|\vmu\in P_\vlambda\}$ is contained in the set of basis webs and is the same size since they are both index by $P_\vlambda$, thus every basis web is generated. For each minuscule Littelmann path $\vmu$ the growth algorithm of Kuperberg gives the gives the same web as $T_\vmu$.

\subsection{$\SL$ for weights $\omega_1$ and $\omega_{n-1}$}
In \cite{Westbury:general} Westbury produces a set of basis webs for $\Hom_{\SL}(\C,V_\vlambda)$ when $\vlambda$ is a sequence of weights $\omega_1$, and $\omega_{n-1}$. In this case, the minimal diagrams of length $1$ are simply:
$$\begin{tikzpicture}[ar/.style={postaction={decorate,decoration={markings,mark=at position .5 with {\arrow{>}}}}}]
\draw (0,0) -- (60:2) -- (120:2) -- cycle;
\draw[ar] ($ (60:0.666) + (120:0.666) $) -- (60:1) node[below right] {$\omega_j$};
\draw[ar] (120:1) node[below left] {$\omega_{j-1}$} -- ($ (60:0.666) + (120:0.666) $);
\draw[ar] ($ (60:0.666) + (120:0.666) $) -- ($ (60:1) + (120:1) $) node[above] {$\omega_1$};
\end{tikzpicture}
\hskip20pt
\begin{tikzpicture}[ar/.style={postaction={decorate,decoration={markings,mark=at position .5 with {\arrow{>}}}}}]
\draw (0,0) -- (60:2) -- (120:2) -- cycle;
\draw[ar] ($ (60:0.666) + (120:0.666) $) -- (60:1) node[below right] {$\omega_{j-1}$};
\draw[ar] (120:1) node[below left] {$\omega_j$} -- ($ (60:0.666) + (120:0.666) $);
\draw[ar] ($ (60:0.666) + (120:0.666) $) -- ($ (60:1) + (120:1) $) node[above] {$\omega_{n-1}$};
\end{tikzpicture}
$$

In this case the basis web already come from the triangular diagrams generated by these length one diagrams. Since there is is only one diagram of length one for each element in the orbits $W\omega_1$ and $W\omega_{n-1}$, there is a unique triangular diagram for each minuscule path. As in the $\SL[3]$ and $\SL[2]$ cases, the resultant basis in $\Hom_{\SL}(\C,V_\vlambda)$ is invariant under rotation.

\section{Specialization to $\SL[4]$}\label{sec:sl4}
To simplify the notation in this section, an unoriented double edge in a triangular diagram is considered to have label $\omega_2$ and any directed edge is considered to have label $\omega_1$. This is equivalent to the previous description of webs since $\omega_2$ is self dual, so the edges labelled $\omega_2$ have no inherent orientation, and $\omega_1$ is dual to $\omega_3$, so all edges with label $\omega_3$ can be reversed to have label $\omega_1$. As mentioned earlier, for $\SL[2]$, $\SL[3]$ there was a single irreducible triangular diagram of length $1$ for each weight in the the Weyl orbit. Under the construction of webs via triangular diagrams this means that there was only one possibility for a set of basis webs. On the other hand when we include $\omega_2$ in $\SL[4]$, we have the following choices:
$$\begin{tikzpicture}[cm={0.5, 0.866, -0.5,0.866,(0,0)},ar/.style={postaction={decorate,decoration={markings,mark=at position .5 with {\arrow{>}}}}}]
\draw (0,0) -- (0,2) -- (2,0) node[midway,above] {$(1,0,1,0)$} -- cycle;
\draw[double] (1,1) -- (0.75,0.75);
\draw[double] (0,1) -- (0.5,0.5);
\draw[ar] (0.75,0.75) -- (0.5,0.5);
\draw[ar] (0.25,0.25) -- (0.5,0.5);
\draw[ar] (0.75,0.75) -- (1.5,0);
\draw[ar] (0.5,0) -- (0.25,0.25);
\end{tikzpicture}\hskip20pt
\begin{tikzpicture}[cm={0.5, 0.866, -0.5,0.866,(0,0)},ar/.style={postaction={decorate,decoration={markings,mark=at position .5 with {\arrow{>}}}}}]
\draw (0,0) -- (0,2) -- (2,0) node[midway,above] {$(1,0,1,0)$} -- cycle;
\draw[double] (1,1) -- (0.75,0.75);
\draw[double] (0,1) -- (0.5,0.5);
\draw[ar] (0.5,0.5) -- (0.75,0.75);
\draw[ar] (0.5,0.5) -- (0.25,0.25);
\draw[ar] (1.5,0) -- (0.75,0.75);
\draw[ar] (0.25,0.25) -- (0.5,0);
\end{tikzpicture}\hskip20pt
\begin{tikzpicture}[cm={0.5, 0.866, -0.5,0.866,(0,0)},ar/.style={postaction={decorate,decoration={markings,mark=at position .5 with {\arrow{>}}}}}]
\draw (0,0) -- (0,2) -- (2,0) node[midway,above] {$(0,1,0,1)$} -- cycle;
\draw[double] (1,1) -- (0.75,0.75);
\draw[double] (1,0) -- (0.5,0.5);
\draw[ar] (0.5,0.5) -- (0.75,0.75);
\draw[ar] (0.5,0.5) -- (0.25,0.25);
\draw[ar] (0,1.5) -- (0.75,0.75);
\draw[ar] (0.25,0.25) -- (0,0.5);
\end{tikzpicture}\hskip20pt
\begin{tikzpicture}[cm={0.5, 0.866, -0.5,0.866,(0,0)},ar/.style={postaction={decorate,decoration={markings,mark=at position .5 with {\arrow{>}}}}}]
\draw (0,0) -- (0,2) -- (2,0) node[midway,above] {$(0,1,0,1)$} -- cycle;
\draw[double] (1,1) -- (0.75,0.75);
\draw[double] (1,0) -- (0.5,0.5);
\draw[ar] (0.75,0.75) -- (0.5,0.5);
\draw[ar] (0.25,0.25) -- (0.5,0.5);
\draw[ar] (0.75,0.75) -- (0,1.5);
\draw[ar] (0,0.5) -- (0.25,0.25);
\end{tikzpicture}$$
When forming a triangular diagram using $T_{(1,0,1,0)}$ or $T_{(0,1,0,1)}$ it is reasonable to pick either of these diagrams. In most cases there is a unique choice for the diagrams $T_{(1,0,1,0)}$ and $T_{(0,1,0,1)}$ that produces a triangular diagram with the least number of vertices. When the choice is not unique, we will show that all choices produce the same web vector. This happens only when we have a minuscule Littelmann path $\vlambda$ such that there exists indices $i$ and $j$ with the property that $\lambda_{i+1}-\lambda_i=(1,0,1,0)$, $\lambda_{j+1}-\lambda_j=(0,1,0,1)$ and the triangular diagram corresponding to the sub path $\lambda_{i+1},\cdots,\lambda_{j-1}$ has only edges of weight $\omega_2$ appearing along its sides. For each such pair $(i,j)$ we will show that there are two choices that result in a minimal number of vertices.

Call a path $\vmu$ $\gamma$-dominant if $\mu_i+\gamma$ is dominant for all $i$. A sequence of minuscule weights $\vnu$ is $\gamma$ dominant if its sequence of partial sums is $\gamma$ dominant. 

\begin{lemma} If $\vnu$ is a sequence of minuscule weights, the multiset of edges weights on the left and and right edges of $\bigotimes_i T_{\nu_i}$ is independent of the choice of $T_{\nu_i}$ when $\nu_i=(1,0,1,0)$ or $(0,1,0,1)$.
\end{lemma}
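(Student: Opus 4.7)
The proof will proceed by induction on the length of $\vnu$, reducing the question to a local multiset-invariance property of the diamond filling. The base case comes from inspecting the four diagrams in the excerpt: for each of $\omega\in\{(1,0,1,0),(0,1,0,1)\}$, the two choices of $T_\omega$ have identical multisets of labels on both the left and right edges of the triangle; the two choices differ only in the order of two adjacent edge labels on one of the sides.

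For the inductive step, assume the result holds for products of fewer than $k$ triangles, and consider toggling the choice of some single $T_{\nu_i}$ inside a product of length $k$. By the base case, this toggle merely permutes the order of two adjacent labels on one side of $T_{\nu_i}$ while preserving the multiset of labels there. These labels are precisely the inputs to the adjacent diamond $C$ built in the product construction. It therefore suffices to prove the following local statement: for any diamond $C$ arising in a product $A\otimes B$, the multiset of labels on its top-left and top-right boundaries depends only on the multisets of labels on its bottom-left and bottom-right boundaries, not on their orderings. Granting this, the multiset on each side of the total product is preserved under each toggle, and iterating the argument over all positions $i$ with $\nu_i\in\{(1,0,1,0),(0,1,0,1)\}$ gives the result for arbitrary combinations of choices.

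The local statement is the main obstacle. My approach is to exploit the symmetry $(a,b)\leftrightarrow(b,a)$ of the diamond vertex rule, which holds in both cases $a=b$ (output $(0,0)$, internal edge $0$) and $a\neq b$ (output $(b,a)$, internal edge $b-a$). Interpreting the top-to-bottom evaluation of $C$ as a greedy cancellation process, each bottom-left label propagates diagonally toward the top-right and each bottom-right label toward the top-left, with equal labels annihilating to $0$ when they collide. The unannihilated labels are exactly what appears on the top-left and top-right sides of $C$. I would then argue that swapping two adjacent inputs on one side produces a sequence of local changes inside the filling whose net effect on the top boundary is at most a permutation; in particular the multiset of un-cancelled labels is preserved.

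The delicate point will be making precise the sense in which an input transposition propagates to an output transposition through the sequential vertex-by-vertex construction of $C$. I anticipate this requiring a rank-by-rank induction through the rows of the diamond, tracking which bottom-left labels have been matched against which bottom-right labels. The key claim at each row is that the partial matching produced by the greedy rule can be rearranged among equal-label pairs without changing the total multiset of unmatched labels, and that an input swap amounts to one such rearrangement.
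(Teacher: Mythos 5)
Your reduction is sound as far as it goes: the two choices of $T_{(1,0,1,0)}$ (resp.\ $T_{(0,1,0,1)}$) do carry the same multiset of labels on each side, differing only in order, so the lemma follows once one knows that the output multisets of a diamond depend only on the input multisets. But that local statement is exactly where your argument stops being a proof: you name it yourself as ``the main obstacle'' and then only outline a plan for it, and the plan --- propagating an adjacent transposition vertex-by-vertex through the greedy filling and arguing that the net effect on the top boundary is ``at most a permutation'' --- is the delicate combinatorial content of the lemma, not a routine verification. As written there is a genuine gap at this step, and the transposition-chasing route is harder than it needs to be, since a single toggle also reorders the outputs of the first diamond it meets, so the perturbation has to be pushed through the whole chain of diamonds.

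The paper closes this more cheaply with an explicit formula: if $L_i$, $R_i$ are the side multisets of the factors, the left and right sides of $\bigotimes_i T_{\nu_i}$ carry $\bigcup_i L_i\setminus\bigcup_i R_i$ and $\bigcup_i R_i\setminus\bigcup_i L_i$ respectively, and these depend only on the $L_i$, $R_i$, which are choice-independent. The paper asserts this from the product rule, but if you want to justify it (and thereby your local statement) the clean argument is a count rather than a transposition argument. In a diamond a label changes only when it meets an \emph{equal} live label, in which case both become $0$; otherwise labels pass through unchanged. Fix a nonzero value $v$ occurring $p$ times among the left inputs and $q$ times among the right inputs, say $p\leq q$. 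If some left copy of $v$ survived, then every right copy of $v$ it crossed was already $0$ when crossed, hence had been annihilated earlier by a distinct \emph{other} left copy of $v$; there are only $p-1<q$ of those, a contradiction. So exactly $\min(p,q)$ copies of $v$ cancel from each side regardless of the evaluation order, which gives the multiset-difference formula for one diamond and, iterated, the global formula. I recommend replacing your rank-by-rank induction with this counting argument.
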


\begin{proof}
Since the multiset of weights on the left and right sides of the irreducible triangles of length $1$ does not depend on this choice and the diagram, let $L_i$ and $R_i$ be the multiset of weights on the left and right of $T_{\nu_i}$. Then the rule for the product of diagrams tells us that the left and right side of $\bigotimes_i T_{\nu_i}$ have multisets of weights $\cup L_i \setminus \cup R_i$ and $\cup R_i \setminus \cup L_i$ respectively.
\end{proof}

\begin{corollary} 
Let $\vnu$ be a $\gamma$-dominant minuscule path. The multiset of weights on the left of $T_\vnu$ is contained in the multiset of fundamental weights that sum to $\gamma$.
\end{corollary}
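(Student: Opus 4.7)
The plan is to combine the preceding lemma, Lemma~\ref{lem:sum}, and the fact that multiset difference automatically produces disjoint supports. Write $\vnu=(\nu_1,\ldots,\nu_m)$ with partial sums $\mu_k=\nu_1+\cdots+\nu_k$, and for each $i$ let $L_i$ and $R_i$ denote the multisets of fundamental weights on the left and right of the irreducible length-one triangle $T_{\nu_i}$. The proof of the preceding lemma shows that the left and right multisets of the full product $T_\vnu$ are $L_\vnu = \bigcup_i L_i \setminus \bigcup_i R_i$ and $R_\vnu = \bigcup_i R_i \setminus \bigcup_i L_i$, where $\setminus$ denotes multiset difference. By the very definition of this operation, for each fundamental weight $\omega_j$ at most one of $L_\vnu$ and $R_\vnu$ has positive multiplicity at $\omega_j$.

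Next I would invoke Lemma~\ref{lem:sum} to get the weight identity $r_\vnu-l_\vnu=\mu_m$. Writing $\gamma=\sum_j e_j\omega_j$ with $e_j\ge 0$, and denoting the multiplicities of $\omega_j$ in $L_\vnu$ and $R_\vnu$ by $c_j$ and $d_j$, this becomes $\mu_m = \sum_j(d_j-c_j)\omega_j$ with the side constraint $c_jd_j=0$ coming from the disjointness above. Applying the $\gamma$-dominance hypothesis at the final partial sum gives $\mu_m+\gamma$ dominant, so $d_j-c_j+e_j\ge 0$ for every $j$. For those $j$ with $c_j>0$, disjointness forces $d_j=0$ and hence $c_j\le e_j$, which is exactly the statement that the left multiset is contained in the multiset of fundamental weights summing to $\gamma$.

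I do not expect a serious obstacle here. The only genuinely new observation is the disjointness forced by multiset difference; once that is in hand the rest reduces to reading off coefficients of the fundamental weights in the identity of Lemma~\ref{lem:sum}. Note that the argument only uses the endpoint instance $\mu_m+\gamma\ge 0$ of the $\gamma$-dominance hypothesis, so the hypothesis as stated is slightly stronger than needed for the corollary in isolation; presumably the stronger version is retained because it is the natural one for the later applications.
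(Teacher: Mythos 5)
There is a genuine gap, and it is largely inherited from the formula in the proof of the preceding lemma rather than introduced by you: the left and right multisets of $T_\vnu$ are \emph{not} $\bigcup_i L_i\setminus\bigcup_i R_i$ and $\bigcup_i R_i\setminus\bigcup_i L_i$, and in particular their supports need not be disjoint. In the diamond joining the partial product $P_{k-1}=\bigotimes_{i<k}T_{\nu_i}$ to $T_{\nu_k}$, cancellation occurs only between the right side of $P_{k-1}$ and the left side of $T_{\nu_k}$; a left edge created at an early stage is never cancelled against a right edge that appears later. The correct recursion is $L(P_k)=L(P_{k-1})\cup\bigl(L_k\setminus R(P_{k-1})\bigr)$ and $R(P_k)=R_k\cup\bigl(R(P_{k-1})\setminus L_k\bigr)$. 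For a concrete counterexample take $\SL[3]$ and $\vnu=(\omega_2-\omega_1,\,\omega_1)$, which is $\omega_1$-dominant: here $L_1=\{\omega_1\}$, $R_1=\{\omega_2\}$, $L_2=\emptyset$, $R_2=\{\omega_1\}$, and the product has left multiset $\{\omega_1\}$ and right multiset $\{\omega_1,\omega_2\}$. The label $\omega_1$ occurs on both sides, so your constraint $c_jd_j=0$ fails.

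Because the disjointness fails, your reduction to the single endpoint inequality (dominance of $\mu_m+\gamma$) does not go through, and in fact the endpoint-only version of the corollary is false: the same $\vnu$ has dominant endpoint $\mu_2=\omega_2$, hence satisfies the endpoint condition with $\gamma=0$, yet its left multiset $\{\omega_1\}$ is not contained in the (empty) multiset of fundamental weights summing to $0$. The full $\gamma$-dominance hypothesis is essential. A correct argument runs prefix by prefix, in the spirit of the proof of Theorem~\ref{thm:dompathzero}: left edges only accumulate, and a new left edge labelled $\omega_j$ can be created at stage $k$ only when $\omega_j$ is absent from $R(P_{k-1})$ and from $R_k$ (each label occurs at most once on each side of a length-one triangle, and its left and right label sets are disjoint); at such a stage $\omega_j$ is then also absent from $R(P_k)$, so Lemma~\ref{lem:sum} applied to the prefix shows that the multiplicity of $\omega_j$ in $L(P_k)$ equals minus the coefficient of $\omega_j$ in $\mu_k$, which is at most $e_j$ because $\mu_k+\gamma$ is dominant. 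That multiplicity never changes after the last such stage, so the bound persists to $k=m$.
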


\begin{theorem}
Let $\vlambda$ be a sequence of dominant minuscule weights and $\vmu\in P_\vlambda$. Let $\vnu$ its sequence of successive differences of $\vmu$. There exists a sequence $T_i$ of minuscule diagrams of length $1$ for $\nu_i$ such that the web corresponding to $\bigotimes T_i$ has the least number of vertices over all choices for $T_i$. For each pair $(i,j)$ such that $\nu_i=(1,0,1,0)$, $\nu_{j+1}=(0,1,0,1)$ along with $\nu_{i+1},\cdots,\nu_j$ is $k\omega_2$ dominant and $\sum_{l=i+1}^j\nu_l = k'\omega_2$ for some $k$, $k'$ there are two choices for the diagrams $T_i$ and $T_{j+1}$.
\end{theorem}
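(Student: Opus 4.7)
The plan is to reduce vertex-counting in $\bigotimes T_i$ to a local problem, then exploit the lemma above — that the multiset of side-labels of the product is independent of the choice of each $T_{\nu_i}$ — in order to promote ``local minimum'' to ``global minimum''. First I would decompose the total number of vertices as a sum of contributions from (a) the irreducible triangles $T_{\nu_i}$ themselves and (b) the diamonds $C_i$ produced at each product step. The diamond-replacement rule has two cases: when the two incoming labels match, the new vertical edge has weight $0$ and is absent from the underlying web, so two degree-$4$ vertices collapse to one; when the labels differ, all three vertices survive. Hence the vertex contribution of each diamond is an explicit function of the matching pattern between the right-side labels of $T_{\nu_i}$ (read bottom-to-top) and the left-side labels of $T_{\nu_{i+1}}$.

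Next I would observe that for any $\omega \in W\omega_k$ other than $(1,0,1,0)$ and $(0,1,0,1)$, the sequences $l_\bullet$ and $r_\bullet$ of Theorem~\ref{thm:lengthone} have length at most one, so $T_\omega$ is unique up to the top/bottom reflection, and the ambiguity is really confined to positions with $\nu_i \in \{(1,0,1,0),(0,1,0,1)\}$. The two admissible diagrams at such a position share the same multiset of side-labels but swap the ordering of $\omega_1$ and $\omega_3$ along each side. Combined with the lemma, swapping the choice at a single position $i$ in isolation affects only the two diamonds immediately flanking $T_{\nu_i}$. Therefore picking each $T_{\nu_i}$ so as to locally maximize the number of matches with its neighbors yields a globally minimal vertex count, and existence of the optimal sequence $T_i$ follows.

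The main obstacle is to characterize exactly when this local choice fails to be unique. The two choices at position $i$ contribute equally to the adjacent diamond precisely when the facing side of the neighboring diagram is insensitive to the $\omega_1 \leftrightarrow \omega_3$ swap, which happens exactly when all labels on that facing side are $\omega_2$. Propagating this condition through the product using the corollary on $\gamma$-dominant paths, an $\omega_2$-only side of a sub-product $T_{\nu_{i+1}} \otimes \cdots \otimes T_{\nu_j}$ corresponds exactly to the sub-sequence $\nu_{i+1},\ldots,\nu_j$ being $k\omega_2$-dominant with partial sum a multiple of $\omega_2$. Within such an $\omega_2$-corridor, any choice of $\omega_1/\omega_3$ interleaving at the bottom of the corridor forces a complementary interleaving at the top, since $(1,0,1,0)$ and $(0,1,0,1)$ are exchanged by the same swap $\omega_1 \leftrightarrow \omega_3$. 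This identifies the linked pair $(i,j+1)$ of positions and produces exactly two coordinated minima in that configuration.

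Finally I would check that in every other configuration the optimum is rigid: any non-$\omega_2$ label appearing on the side of the connecting sub-product strictly favours one of the two choices for $T_{\nu_i}$, restoring uniqueness and excluding spurious linked pairs. This last step reduces to a short case analysis of diamond match-counts against $\omega_1$ and $\omega_3$ labels, and together with the previous steps establishes both the existence of a minimizer and the precise description of when it is non-unique.
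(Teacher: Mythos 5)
Your proposal follows essentially the same route as the paper: both arguments decompose the vertex count into crossing contributions of the diamonds, invoke the lemma that the multiset of side labels of $\bigotimes_i T_{\nu_i}$ is independent of the choices to decouple the local decisions, and then identify the $\omega_2$-corridor (via the $\gamma$-dominance corollary) as the exact locus where the choice at $\nu_i=(1,0,1,0)$ becomes linked to the choice at $\nu_{j+1}=(0,1,0,1)$, producing the two coordinated minimizers. One sentence in your second paragraph is literally false and would break the greedy argument if taken at face value: swapping the choice at position $i$ does \emph{not} affect only the two diamonds immediately flanking $T_{\nu_i}$, since the $\omega_1$ and $\omega_3$ edges emitted on the right of $T_{(1,0,1,0)}$ traverse every diamond strip of the corridor before meeting their partners at position $j+1$, which may be arbitrarily far away; in particular, maximizing matches against the \emph{immediate} neighbour gives no information when that neighbour presents only $\omega_2$ labels. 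The repair is that the intermediate strips contribute a swap-invariant number of vertices (both edges cross the same set of $\omega_2$'s), so the choice is decided entirely at the first strip presenting an oriented label --- which is precisely the corridor analysis you carry out in your third paragraph, and is also how the paper phrases it (it looks at the surviving oriented edges on the left of $\bigotimes_{l=i+1}^{j+1}T_{\nu_l}$ and matches $T_i$ to them). With that sentence corrected, your plan reproduces the paper's proof.
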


\begin{proof}
For any $\nu_i\not\in\{(1,0,1,0),(0,1,0,1)\}$, we only have one choice of irreducible diagram of length one, set $T_i=T_{\nu_i}$. For $\nu_i=(1,0,1,0)$, let $j$ be maximal such that $\nu_{i+1},\cdot,\nu_j$ is $k\omega_2$ dominant for some $k$. Thus by the above corollary $\bigotimes{k=i+1}^j T_k$ has only unoriented double edges on the left. Since $j$ is maximal, this implies that there is at least one oriented edge on the left of $\bigotimes_{l=i+1}^{j+1} T_{\mu_l}$. There can be at most $2$, since no irreducible diagram of length $1$ has more than $2$ edges on the left. Suppose there is $1$ oriented edge on the left, if it is oriented from left to right we choose $T_i$ to be the first of the two diagram for $\nu_i$, otherwise we chose the second. The previous lemma implies that this edge is independent of any choices for $T_k$ for $i<k\leq j$. In the case that two oriented edges are present we must have $\nu_{j+1}=(0,1,0,1)$ since any other weight would add $0$ or $1$ edges to the left. In this case there are two choices which minimize crossings: both the first or both the second choices for $T_i$ and $T_{j+1}$. The argument is similar for $\nu_i=(0,1,0,1)$.

In order to see that these choices result in the least number of vertices, making the the other choice at any stage results in a diagram with a strictly greater number of vertices. By the previous lemma, these choices do not interfere with each other, so each time we switch the choice in the above construction, we increase the number of vertices.
\end{proof}

\begin{lemma} The invariant vector assigned to the preceding web is independent of the choices made during the construction of the triangular diagram.
\end{lemma}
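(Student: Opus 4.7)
The plan is to show $\Psi(w_1)=\Psi(w_2)$ for any two webs $w_1,w_2$ arising from allowed choice sequences, by reducing to a single matched pair and then applying a local argument. By the preceding lemma, the multiset of weights on the left and right sides of $\bigotimes_i T_{\nu_i}$ is independent of the choices at each $\nu_i=(1,0,1,0)$ or $(0,1,0,1)$, so both webs share the same boundary $\vlambda$ and the same associated path $\vmu\in P_\vlambda$. Theorem~\ref{thm:expansion} then yields Satake expansions
$$\Psi(w_\ell) = [\overline{Q(A(\vlambda,\vmu))}] + \sum_{\vmu' < \vmu} c^{(\ell)}_{\vmu'}\,[\overline{Q(A(\vlambda,\vmu'))}], \qquad \ell=1,2,$$
so the difference $\Psi(w_1)-\Psi(w_2)$ lies in the span of Satake components labeled by paths strictly below $\vmu$.

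Since the allowable choices at distinct matched pairs are decoupled (again by the multiset-invariance lemma), it suffices to prove the statement when $w_1$ and $w_2$ differ by swapping the choice at a single matched pair $(i,j+1)$. I would localize the discussion to the sub-web $T_i\otimes T_{\nu_{i+1}}\otimes\cdots\otimes T_{\nu_j}\otimes T_{\nu_{j+1}}$. By the $k\omega_2$-dominance assumption on $\nu_{i+1},\ldots,\nu_j$, the left and right external edges of this sub-web are labeled only by $\omega_2$, while the swap only alters the orientation pattern of internal $\omega_1$/$\omega_3$ edges within the sub-web; outside this region the two webs are identical.

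The key step is to show that this internal rearrangement can be realized by a sequence of spider relations for $\SL[4]$ webs in the sense of \cite{Morrison:diagram}, which preserve the invariant vector $\Psi$. An induction on $j-i$ should allow one to propagate the swap through the diamonds $C_{i+1},\ldots,C_{j+1}$ arising in the product construction; the base case $j=i$ is the direct comparison of two configurations glued by a single diamond, and since that diamond has only $\omega_2$ sides adjoining the swap, a finite number of local square/bigon moves transform one configuration into the other.

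The main obstacle is the explicit diagram-chase through the intermediate diamonds: one must verify that the swap at $T_i$ can be ``slid'' across each $C_{i+1},\ldots,C_j$ without altering the external web until it is absorbed by the matching swap at $T_{j+1}$. The self-duality of $\omega_2$ and the very restricted local structure inside the $k\omega_2$-dominant region should make this propagation tractable, but a careful enumeration of the relevant diamond cases is where the real work of the proof lies.
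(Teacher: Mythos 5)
Your overall strategy is the paper's: reduce to a single matched pair and then connect the two choices by local web relations that lie in the kernel of $\Psi$. But as written the proposal has a genuine gap, and you name it yourself: ``a careful enumeration of the relevant diamond cases is where the real work of the proof lies.'' That enumeration \emph{is} the proof. The paper's argument consists of (a) identifying two specific relations known to lie in the kernel of $\Psi$ (from Kim's thesis --- equations~(\ref{sl4ker1}) and~(\ref{sl4ker2}) above: the orientation-reversal of a square bounded by four $\omega_2$ legs, and the two $I$--$H$ moves for an $\omega_2$ edge whose adjacent single edges are all incoming or all outgoing), and (b) exhibiting an explicit four-step chain of diagrams: the first choice, then the result of applying~(\ref{sl4ker2}) at each internal $\omega_2$ edge of the strip to turn it into a chain of oriented squares linked by $\omega_2$ edges, then the same chain with all squares reversed via~(\ref{sl4ker1}), then the second choice via~(\ref{sl4ker2}) again. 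The hypothesis that the intervening region is $k\omega_2$-dominant is used precisely to guarantee that the only diamonds occurring in the strip are of the two types to which these relations apply. Without specifying which relations you are invoking and checking that they match the diamonds that actually arise, the ``slide the swap across each diamond'' induction is a plan rather than an argument; in particular, appealing generically to ``spider relations in the sense of \cite{Morrison:diagram}'' does not by itself tell you that the particular local rearrangement you need is a consequence of relations known to hold.

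Two smaller points. First, your opening paragraph about the Satake expansions only shows that the two invariant vectors agree in their leading term; it does not constrain the lower-order coefficients and is not used in the rest of your argument, so it does no work here. Second, the reduction to a single matched pair is fine and implicit in the paper (the choices at distinct pairs affect disjoint strips), but the substance of the lemma is entirely in the local computation you deferred.
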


Figure~\ref{sl4choice} shows one possible choice for the strip of diamonds that join the two triangles
\begin{figure*} \begin{tikzpicture}[ar/.style={postaction={decorate,decoration={markings,mark=at position .5 with {\arrow{>}}}}},cm={0.5, 0.866, -0.5,0.866,(0,0)},scale=0.5]
\draw (0,0) -- (12,0) -- (9,3) -- (3,3) -- (3,9) -- (0,12) -- cycle;
\draw (3,0) -- (3,3) (0,3) -- (3,3) (6,0) -- (6,3) (0,6) -- (3,6) (9,0) -- (9,3) (0,9) -- (3,9);

\draw[double] (0,7.019) -- (0.680,7.019); \draw[ar] (1.160,7.5) -- (0.680,7.019); \draw[double] (1.160,7.5) -- (1.841,7.5); \draw[ar] (1.841,7.5) -- (2.321,7.981); \draw[double] (2.321,7.981) -- (3,7.981);
\draw[ar] (1.841,7.5) -- (1.841,3);
\draw[ar] (0.680,3) -- (0.680,7.019);
\draw[ar] (1,10) -- (2.321,7.981);
\draw[double] (1,10) -- (1.5,10.5); \draw[ar] (1,10) -- (0.5,9.5); \draw[ar] (1.160,7.5) -- (0.5,9.5);
\draw[double] (0,9.5) -- (0.5,9.5);

\draw[double] (7.019,0.680) -- (7.019,0); \draw[ar] (7.019,0.680) -- (7.5,1.160); \draw[double] (7.5,1.841) -- (7.5,1.160); \draw[ar] (7.981,2.321) -- (7.5,1.841); \draw[double] (7.981,3) -- (7.981,2.321);
\draw[ar] (3,1.841) -- (7.5,1.841);
\draw[ar] (7.019,0.680) -- (3,0.680);
\draw[ar] (7.981,2.321) -- (10,1);
\draw[double] (10.5,1.5) -- (10,1); \draw[ar] (9.5,0.5) -- (10,1); \draw[ar] (9.5,0.5) -- (7.5,1.160);
\draw[double] (9.5,0.5) -- (9.5,0);

\draw[ar] (3,0.680) .. controls (2,0.680) and (0.680,2) .. (0.680,3);
\draw[ar] (1.841,3) .. controls (1.841,2.5) and (2.5,1.841) .. (3,1.841);
\draw (5,5) node {$A$};
\end{tikzpicture}\caption{A possible choice in the construction of a triangular diagram.}\label{sl4choice} \end{figure*}
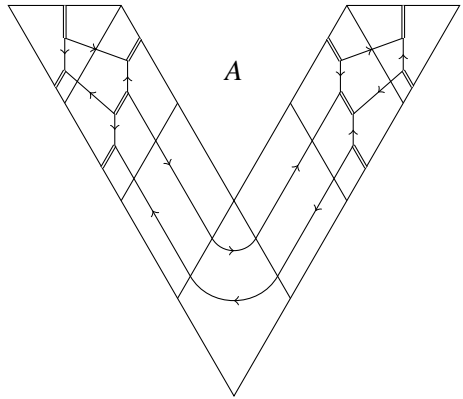
By supposition the triangular diagram taking the place of $A$ in Figure~\ref{sl4choice} has only edges of weight $\omega_2$ incident on the left and right sides. Thus, in general, the only possible diamonds that appear in the strip are those pictured in Figure~\ref{sl4choice}. In \cite{Kim:thesis} Kim proposes a set of generators of the kernel of the map $\Psi$ from linear combinations of webs to invariant vectors. Since these relations at least generation a subspace of the kernel, we can use them to see that two diagrams result in the same invariant vector. Of the relations that Kim defines, the following $3$ will be needed to prove the lemma:
\begin{equation}\label{sl4ker1}\begin{split}\begin{tikzpicture}[ar/.style={postaction={decorate,decoration={markings,mark=at position .5 with {\arrow{>}}}}},scale=0.75]
\draw[double] (0.5,0.5) -- (1,1) (2.5,0.5) -- (2,1) (2.5,2.5) -- (2,2) (0.5,2.5) -- (1,2);
\draw[ar] (1,1) -- (1,2);
\draw[ar] (1,1) -- (2,1);
\draw[ar] (2,2) -- (1,2);
\draw[ar] (2,2) -- (2,1);
\end{tikzpicture}\end{split}\hskip5pt=\hskip5pt\begin{split}
\begin{tikzpicture}[ar/.style={postaction={decorate,decoration={markings,mark=at position .5 with {\arrow{>}}}}},scale=0.75]
\draw[double] (0.5,0.5) -- (1,1) (2.5,0.5) -- (2,1) (2.5,2.5) -- (2,2) (0.5,2.5) -- (1,2);
\draw[ar] (1,2) -- (1,1);
\draw[ar] (2,1) -- (1,1);
\draw[ar] (2,1) -- (2,2);
\draw[ar] (1,2) -- (2,2);
\end{tikzpicture}\end{split}
\end{equation}
\begin{equation}\label{sl4ker2}\begin{split}\begin{tikzpicture}[ar/.style={postaction={decorate,decoration={markings,mark=at position .5 with {\arrow{>}}}}},scale=0.75]
\draw[double] (0.5,0.5) -- (0.5,1.5);
\draw[ar] (0,0) -- (0.5,0.5);
\draw[ar] (1,0) -- (0.5,0.5);
\draw[ar] (0,2) -- (0.5,1.5);
\draw[ar] (1,2) -- (0.5,1.5);
\end{tikzpicture}\end{split} =\hskip5pt \begin{split}
\begin{tikzpicture}[ar/.style={postaction={decorate,decoration={markings,mark=at position .5 with {\arrow{>}}}}},scale=0.75]
\draw[double] (0.5,0.5) -- (1.5,0.5);
\draw[ar] (0,0) -- (0.5,0.5);
\draw[ar] (0,1) -- (0.5,0.5);
\draw[ar] (2,0) -- (1.5,0.5);
\draw[ar] (2,1) -- (1.5,0.5);
\end{tikzpicture}\end{split} \hskip10pt \text{and}\hskip10pt \begin{split}\begin{tikzpicture}[ar/.style={postaction={decorate,decoration={markings,mark=at position .5 with {\arrow{>}}}}},scale=0.75]
\draw[double] (0.5,0.5) -- (0.5,1.5);
\draw[ar] (0.5,0.5) -- (0,0);
\draw[ar] (0.5,0.5) -- (1,0);
\draw[ar] (0.5,1.5) -- (0,2);
\draw[ar] (0.5,1.5) -- (1,2);
\end{tikzpicture}\end{split} =\hskip5pt \begin{split}
\begin{tikzpicture}[ar/.style={postaction={decorate,decoration={markings,mark=at position .5 with {\arrow{>}}}}},scale=0.75]
\draw[double] (0.5,0.5) -- (1.5,0.5);
\draw[ar] (0.5,0.5) -- (0,0);
\draw[ar] (0.5,0.5) -- (0,1);
\draw[ar] (1.5,0.5) -- (2,0);
\draw[ar] (1.5,0.5) -- (2,1);
\end{tikzpicture}\end{split} 
\end{equation}

\begin{proof}
The following sequence of diagrams all share the same invariant vector:
$$\begin{tikzpicture}[ar/.style={postaction={decorate,decoration={markings,mark=at position .5 with {\arrow{>}}}}},scale=0.75]
\draw[double] (0.5,0.5) -- (1,1) (0.5,2.5) -- (1,2) (2,2.5) -- (2,2) (3,2) -- (3,1) (4,1) -- (4,0.5) (7,2.5) -- (7,2) (6,2) -- (6,1) (5,1) -- (5,0.5) (8,2) -- (8.5,2.5) (8,1) -- (8.5,0.5);
\draw[ar] (1,2) -- (1,1);
\draw[ar] (3,1) -- (1,1);
\draw[ar] (1,2) -- (2,2);
\draw[ar] (3,2) -- (2,2);
\draw[ar] (3,2) -- (4,2);
\draw[ar] (3,1) -- (4,1);
\draw[ar,loosely dotted] (4,2) -- (5,2);
\draw[ar,loosely dotted] (5,1) -- (4,1);
\draw[ar] (5,2) -- (6,2);
\draw[ar] (5,1) -- (6,1);
\draw[ar] (8,1) -- (6,1);
\draw[ar] (7,2) -- (6,2);
\draw[ar] (7,2) -- (8,2);
\draw[ar] (8,1) -- (8,2);
\end{tikzpicture}$$
$$\begin{tikzpicture}[ar/.style={postaction={decorate,decoration={markings,mark=at position .5 with {\arrow{>}}}}},scale=0.75]
\draw[double] (0.5,0.5) -- (1,1) (0.5,2.5) -- (1,2) (2,2.5) -- (2,2) (4,2) -- (2,1) (4,1) -- (4,0.5) (7,2.5) -- (7,2) (5,2) -- (7,1) (5,1) -- (5,0.5) (8,2) -- (8.5,2.5) (8,1) -- (8.5,0.5);
\draw[ar] (1,2) -- (1,1);
\draw[ar] (2,1) -- (1,1);
\draw[ar] (1,2) -- (2,2);
\draw[ar] (2,1) -- (2,2);
\draw[ar] (4,2) -- (4,1);
\draw[ar,loosely dotted] (4,2) -- (5,2);
\draw[ar,loosely dotted] (5,1) -- (4,1);
\draw[ar] (5,1) -- (5,2);
\draw[ar] (7,2) -- (8,2);
\draw[ar] (7,2) -- (7,1);
\draw[ar] (8,1) -- (7,1);
\draw[ar] (8,1) -- (8,2);
\end{tikzpicture}$$
$$\begin{tikzpicture}[ar/.style={postaction={decorate,decoration={markings,mark=at position .5 with {\arrow{>}}}}},scale=0.75]
\draw[double] (0.5,0.5) -- (1,1) (0.5,2.5) -- (1,2) (2,2.5) -- (2,2) (4,2) -- (2,1) (4,1) -- (4,0.5) (7,2.5) -- (7,2) (5,2) -- (7,1) (5,1) -- (5,0.5) (8,2) -- (8.5,2.5) (8,1) -- (8.5,0.5);
\draw[ar] (1,1) -- (1,2);
\draw[ar] (1,1) -- (2,1);
\draw[ar] (2,2) -- (1,2);
\draw[ar] (2,2) -- (2,1);
\draw[ar] (4,1) -- (4,2);
\draw[ar,loosely dotted] (5,2) -- (4,2);
\draw[ar,loosely dotted] (4,1) -- (5,1);
\draw[ar] (5,2) -- (5,1);
\draw[ar] (8,2) -- (7,2);
\draw[ar] (7,1) -- (7,2);
\draw[ar] (7,1) -- (8,1);
\draw[ar] (8,2) -- (8,1);
\end{tikzpicture}$$
$$\begin{tikzpicture}[ar/.style={postaction={decorate,decoration={markings,mark=at position .5 with {\arrow{>}}}}},scale=0.75]
\draw[double] (0.5,0.5) -- (1,1) (0.5,2.5) -- (1,2) (2,2.5) -- (2,2) (3,2) -- (3,1) (4,1) -- (4,0.5) (7,2.5) -- (7,2) (6,2) -- (6,1) (5,1) -- (5,0.5) (8,2) -- (8.5,2.5) (8,1) -- (8.5,0.5);
\draw[ar] (1,1) -- (1,2);
\draw[ar] (1,1) -- (3,1);
\draw[ar] (2,2) -- (1,2);
\draw[ar] (2,2) -- (3,2);
\draw[ar] (4,2) -- (3,2);
\draw[ar] (4,1) -- (3,1);
\draw[ar,loosely dotted] (5,2) -- (4,2);
\draw[ar,loosely dotted] (4,1) -- (5,1);
\draw[ar] (6,2) -- (5,2);
\draw[ar] (6,1) -- (5,1);
\draw[ar] (6,1) -- (8,1);
\draw[ar] (6,2) -- (7,2);
\draw[ar] (8,2) -- (7,2);
\draw[ar] (8,2) -- (8,1);
\end{tikzpicture}$$

The first and last are the two choices we could make in the construction of the triangular diagrams. Since the internal edges of weight $\omega_2$ have all other edges adjacent incoming or outgoing, we can apply equation~\ref{sl4ker2}. This shows that the second and third diagrams result in the same invariant vector as the first and last respectively. The resulting two diagrams are a sequence of squares linking by edges of weight $\omega_2$. We can apply equation~\ref{sl4ker1} to move between these two diagrams. Thus the second and third diagrams have the same invariant vector.
\end{proof}

Since the resulting set of webs is a basis, the hope would be to use this basis to prove the conjecture of Kim \cite{Kim:thesis} on the generators of the kernel of the map $\Psi$. If it can be shown that any web can be reduced to a sum of the given basis webs by Kim's relations, then the set of kernel relations would be complete. Unfortunately there does not seem to be any global criterion which picks out the basis webs from the set of coherent webs. For instance the webs in Figures~\ref{fig:nonminimal} and \ref{fig:minimal} both have associated sequence $(0,\omega_1,\omega_3+\omega_1,\omega_2+\omega_3,\omega_2,0)$. Figure~\ref{fig:nonminimal} is build via triangular diagrams is neither minimal with respect to the number of vertices or faces.

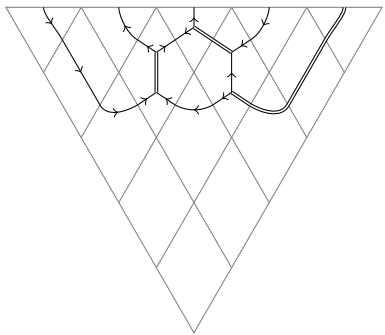
\begin{figure*}\begin{tikzpicture}[ar/.style={postaction={decorate,decoration={markings,mark=at position .5 with {\arrow{>}}}}}]
\draw[gray] (0,0) -- (60:5) -- (120:5) -- cycle;
\draw[gray] (60:1) -- +(120:4);
\draw[gray] (60:2) -- +(120:3);
\draw[gray] (60:3) -- +(120:2);
\draw[gray] (60:4) -- +(120:1);
\draw[gray] (120:1) -- +(60:4);
\draw[gray] (120:2) -- +(60:3);
\draw[gray] (120:3) -- +(60:2);
\draw[gray] (120:4) -- +(60:1);

\draw[ar] ($ (120:5) + (0.5,0) $) .. controls ($ (120:5) + (0.5,0) + (0,-0.1) $) and ($ (120:4.2) + (60:0.5) $) .. ($ (120:4) + (60:0.5) $);
\draw[ar] ($ (120:4) + (60:0.5) $) -- +(300:1);
\draw[ar] ($ (120:3) + (60:0.5) $) .. controls ($ (120:2.8) + (60:0.5) $) and ($ (120:2.6) + (60:0.8) $) .. ($ (120:2.5) + (60:1) $);

\draw[ar] ($ (120:3) + (60:1.5) $) .. controls ($ (120:3.2) + (60:1.4) $) and ($ (120:4) + (60:1) + (0.5,-0.1) $) .. ($ (120:4) + (60:1) + (0.5,0) $);
\draw[ar] ($ (120:2.5) + (60:1) $) -- ($ (120:2) + (60:1) + (0,0.6) $);
\draw[ar] ($ (120:2) + (60:1.5) $) -- ($ (120:2) + (60:1) + (0,0.6) $);
\draw[ar] ($ (120:3) + (60:2) - (0,0.6) $) -- ($ (120:3) + (60:1.5) $);
\draw[ar] ($ (120:3) + (60:2) - (0,0.6) $) -- ($ (120:2.5) + (60:2) $);
\draw[double] ($ (120:2) + (60:1) + (0,0.6) $) -- ($ (120:3) + (60:2) - (0,0.6) $);
\draw[ar]  ($ (120:1.5) + (60:2) $) .. controls ($ (120:1.6) + (60:1.8) $) and ($ (120:1.8) + (60:1.6) $) .. ($ (120:2) + (60:1.5) $);

\draw[ar] ($ (120:2) + (60:2) + (0,0.6) $) -- ($ (120:2.5) + (60:2) $);
\draw[ar] ($ (120:2) + (60:2) + (0,0.6) $) -- ($ (120:3) + (60:2) + (0.5,0) $);
\draw[double] ($ (120:2) + (60:2) + (0,0.6) $) -- ($ (120:2) + (60:2.5) $);

\draw[double] ($ (60:2.5) + (120:1) $) -- ($ (60:2) + (120:1) + (0,0.6) $);
\draw[ar] ($ (60:2) + (120:1) + (0,0.6) $) -- ($ (60:2) + (120:1.5) $);
\draw[ar] ($ (60:3) + (120:1.5) $) -- ($ (60:3) + (120:2) - (0,0.6) $);
\draw[double] ($ (60:3) + (120:2) - (0,0.6) $) -- ($ (60:2.5) + (120:2) $);
\draw[ar] ($ (60:2) + (120:1) + (0,0.6) $) -- ($ (60:3) + (120:2) - (0,0.6) $);
\draw[double] ($ (60:3) + (120:0.5) $) .. controls ($ (60:2.8) + (120:0.5) $) and ($ (60:2.6) + (120:0.8) $) .. ($ (60:2.5) + (120:1) $);

\draw[ar] ($ (60:4) + (120:1) + (-0.5,0) $) .. controls ($ (60:4) + (120:1) + (-0.5,-0.1) $) and ($ (60:3.2) + (120:1.4) $) .. ($ (60:3) + (120:1.5) $) ;
\draw[double] ($ (60:4) + (120:0.5) $) -- +(240:1);

\draw[double] ($ (60:5) + (-0.5,0) $) .. controls ($ (60:5) + (-0.5,0) + (0,-0.1) $) and ($ (60:4.2) + (120:0.5) $) .. ($ (60:4) + (120:0.5) $);
\end{tikzpicture}
\caption{The triangular diagram associated to the minuscule Littelmann path $(0,\omega_1,\omega_3+\omega_1,\omega_2+\omega_3,\omega_2,0)$.}\label{fig:nonminimal} \end{figure*}

\begin{figure*} \begin{tikzpicture}[ar/.style={postaction={decorate,decoration={markings,mark=at position .5 with {\arrow{>}}}}}]
\draw[gray] (-0.5,0) -- (4.5,0);
\draw[ar] (0,0) .. controls (0,-2) and (1.8,-2.9) .. (2,-3);
\draw[double] (4,0) .. controls (4,-2) and (2.2,-2.9) .. (2,-3);
\draw[ar] (2,0) .. controls (2,-0.6) and (2.35,-0.95) .. (2.5,-1);
\draw[ar] (3,0) .. controls (3,-0.6) and (2.65,-0.95) .. (2.5,-1);
\draw[ar] (2,-2) .. controls (1.8,-1.9) and (1,-1) .. (1,0);
\draw[double] (2.5,-1) .. controls (2.5,-1.5) and (2.2,-1.9) .. (2,-2);
\draw[ar] (2,-2) -- (2,-3);
\end{tikzpicture}
\caption{A diagram associated to the minuscule Littelmann path $(0,\omega_1,\omega_3+\omega_1,\omega_2+\omega_3,\omega_2,0)$ with a minimal number of faces.}\label{fig:minimal} \end{figure*}
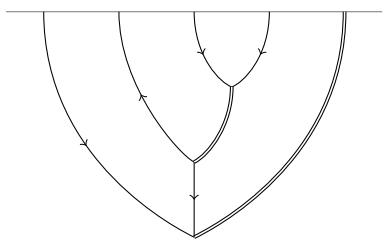
\bibliographystyle{hamsplain}
\bibliography{basiswebs}

\end{document}